\newcommand{\Z}{{\mathbb Z}}
\newcommand{\C}{{\mathbb C}}
\newcommand{\Q}{{\mathbb Q}}
\newcommand{\tr}{\mathrm{tr}\:}
\newcommand{\R}{{\mathbb R}}
\newcommand{\Br}{\mathrm{Br}}
\newcommand{\Cor}{\mathrm{cor}}
\newcommand{\cO}{\mathscr{O}}
\newcommand{\Ga}{\mathrm{Gal}}
\newtheorem{thm}{Theorem}[section]
\newtheorem{lemma}[thm]{Lemma}
\newtheorem{prop}[thm]{Proposition}
\newtheorem{cor}[thm]{Corollary}
\newcommand{\gen}{\mathbf{gen}}
\newcommand{\Ram}{\mathrm{Ram}}
\begin{document}

\title[Genus and unramified Brauer group]{The genus of a division algebra and the unramified Brauer
group}

\author[V.~Chernousov]{Vladimir I. Chernousov}
\author[A.~Rapinchuk]{Andrei S. Rapinchuk}
\author[I.~Rapinchuk]{Igor A. Rapinchuk}

\address{Department of Mathematics, University of Alberta, Edmonton, Alberta T6G 2G1, Canada}

\email{vladimir@ualberta.ca}

\address{Department of Mathematics, University of Virginia,
Charlottesville, VA 22904-4137, USA}

\email{asr3x@virginia.edu}

\address{Department of Mathematics, Yale University, New Haven, CT
06520-8283, USA}

\email{igor.rapinchuk@yale.edu}

\begin{abstract}
Let $D$ be a finite-dimensional central division algebra over a
field $K$. We define the genus $\gen(D)$ of $D$ to be the collection of
classes $[D'] \in \Br(K)$, where $D'$ is a central division
$K$-algebra having the same maximal subfields as $D$. In this paper,
we describe a general approach to proving the finiteness of
$\gen(D)$ and estimating its size that involves the unramified
Brauer group with respect to an appropriate set of discrete
valuations of $K$. This approach is then implemented in some
concrete situations, yielding in particular an extension of the
Stability Theorem of \cite{RR} from quaternion algebras to arbitrary
algebras of exponent two. We also consider an example where the size
of the genus can be estimated explicitly. Finally, we offer two
generalizations of the genus problem for division algebras: one
deals with absolutely almost simple algebraic $K$-groups having the
same isomorphism/isogeny classes of maximal $K$-tori, and the other
with the analysis of weakly commensurable Zariski-dense subgroups.
\end{abstract}

\maketitle

\section{Introduction}\label{S:Intro}

For a finite-dimensional central division algebra $A$ over a field
$K$, we let $[A]$ denote the corresponding class in the Brauer group
$\Br(K)$ of $K$. Following \cite{CRR}, we define the genus $\gen(D)$
of a central division $K$-algebra $D$ of degree $n$ to be the
collection of all classes $[D'] \in \Br(K)$, where $D'$ is a central
division $K$-algebra having the same maximal fields as $D$ (in
precise terms, this means that $D'$ has the same degree $n$, and a
field extension $P/K$ of degree $n$ admits a $K$-embedding $P
\hookrightarrow D$ if and only if it admits a $K$-embedding $P
\hookrightarrow D'$). One of the results announced in \cite{CRR}
states that if $K$ is a finitely generated field, then
the genus $\gen(D)$
of a central division $K$-algebra $D$ of degree $n$ prime to
$\mathrm{char} \: K$ is finite. The proof consists of two parts:
first, one relates the size of $\gen(D)$ to that of ${}_n\Br(K)_V$,
the $n$-torsion of the unramified Brauer group $\Br(K)_V$ with
respect to a suitable set $V$ of discrete valuations of $K$; second,
one establishes the finiteness of ${}_n\Br(K)_V$. The goal of the
current paper is to give a detailed exposition of the first part.
This analysis, in particular, enables us to extend the Stability
Theorem of \cite{RR} from quaternion algebras to arbitrary algebras
of exponent two. In \cite{CRR}, we sketched a proof, communicated to
us by J.-L.~Colliot-Th\'el\`ene \cite{CT}, of the finiteness of
${}_n\Br(K)_V$ for a suitable $V$, which relies on Deligne's
finiteness theorem for constructible sheaves \cite{Del} and Gabber's
purity theorem \cite{Fuj}.
Our original proof was based on an analysis of the standard exact
sequence for the Brauer group of a curve (cf. \cite{L} or
\cite[(9.25) on p. 27]{GMS}), and the details of this proof will be
given elsewhere. A noteworthy feature of the second proof is that it
leads to explicit estimates on the order of the $n$-torsion of the
unramified Brauer group, hence on the size of the genus: to
demonstrate this point, as well as to showcase some of the ideas
involved in the general argument, we compute an upper bound for the
size of the 2-torsion in the unramified Brauer group of the field of
rational functions of a split elliptic curve over a number field.
In any case, the set of valuations $V$ for which
one can prove the finiteness of ${}_n\Br(K)_V$ is rather special and
arises from geometric considerations; at the same time, one can
relate the size of $\gen(D)$ to that of ${}_n\Br(K)_V$ in a much more
general context (which is our main motivation for separating the two parts of the argument).
So, we begin with a precise description
of the set-up that will be used throughout this paper.

\vskip2mm

Let $K$ be a field. Given a discrete valuation $v$ of $K$, we will
denote by $\mathcal{O}_{K,v}$ and $\overline{K}_v$ its valuation
ring and residue field, respectively. Fix an integer $n > 1$ (which
will later be either the degree or the exponent of $D$) and suppose
that $V$ is a set of discrete valuations of $K$ that satisfies the
following three conditions:

\vskip2mm

\noindent (A)\: \parbox[t]{11.5cm}{\it For any $a \in K^{\times}$,
the set $V(a) := \{ v \in V \: \vert \: v(a) \neq 0\}$ is finite;}

\vskip1mm

\noindent (B)\: \parbox[t]{11.5cm}{\it There exists a \emph{finite}
subset $V' \subset V$ such that the field of fractions of $$\cO :=
\bigcap_{v \in V \setminus V'} \cO_{K , v},$$  coincides with $K$;}

\vskip1mm

\noindent (C)\: \parbox[t]{15cm}{\it For any $v \in V$, the
characteristic of the residue field $\overline{K}_v$ is prime to
$n$.}

\vskip2mm

\noindent (We note that if $K$ is finitely generated, which will be
the case in most of our applications, then  (B) automatically
follows from (A) - see \S \ref{S:Ram}.) Due to (C), we can define
for each $v \in V$ the corresponding {\it residue map}
$$
\rho_v \colon {}_n\Br(K) \longrightarrow
\mathrm{Hom}(\mathcal{G}^{(v)} \: , \: \Z/n\Z),
$$
where $\mathcal{G}^{(v)}$ is the absolute Galois group of
$\overline{K}_v$ (cf., for example, \cite[\S 10]{Salt} or
\cite[Ch.II, Appendix]{Serre}). As usual, a class $[A] \in
{}_n\Br(K)$ (or a finite-dimensional central simple $K$-algebra $A$
representing this class) is said to be {\it unramified} at $v$ if
$\rho_v([A]) = 1$, and {\it ramified} otherwise. We let $\Ram_V(A)$
(or $\Ram_V([A])$) denote the set of all $v \in V$ where $A$ is
ramified; one shows that this set is always finite (Proposition
\ref{P:R1}). We also define the unramified part of ${}_n\Br(K)$ with
respect to $V$ to be
$$
{}_n\Br(K)_V = \bigcap_{v \in V} \mathrm{Ker} \: \rho_v.
$$
The goal of \S \ref{S:Ram} is to prove the following result that
relates the size of the genus with the order of the unramified Brauer
group.

\vskip2mm

\noindent {\bf Theorem \ref{T:Ram1}.} {\it Assume that
${}_n\Br(K)_V$ is finite. Then for any finite-dimensional central
division $K$-algebra $D$ of exponent $n$, the intersection $\gen(D)
\cap {}_n\Br(K)$ is finite,  of size
$$
\vert \gen(D) \cap {}_n\Br(K) \vert \leqslant \vert {}_n\Br(K)_V
\vert \cdot \varphi(n)^r, \ \ \text{with} \ \ r = \vert \Ram_V(D)
\vert,
$$
where $\varphi$ is the Euler function. In particular, if $D$ has
degree $n$ then $$\vert \gen(D) \vert \leqslant \vert {}_n\Br(K)_V
\vert \cdot \varphi(n)^r.$$}

\vskip2mm

We use this result in \S\ref{S:Stab} to estimate the size of the
genus for division algebras over the function fields of curves in
certain situations. This analysis, in particular, enables us to
describe some cases where $\gen(D)$ reduces to a single
element. First, we observe that since the opposite algebra
$D^{\mathrm{op}}$ has the same maximal subfields as $D$, this can
happen only if $[D^{\mathrm{op}}] = [D]$, i.e. if $D$ has exponent 2
in the Brauer group. On the other hand, it follows from the theorem
of Artin-Hasse-Brauer-Noether (AHBN) (cf. 3.6) that $\gen(D)$ does
reduce to a single element for any algebra $D$ of exponent 2 over a
global field $K$ (in which case $D$ is necessarily a quaternion
algebra).
The following theorem, which was established earlier in \cite{RR}
for quaternion algebras, expands the class of fields with this
property.

\vskip2mm

\noindent {\bf Theorem \ref{T:Stab1}.} (Stability Theorem) {\it Let
$k$ be a field of characteristic $\neq 2$.

\vskip2mm

\noindent {\rm (1)} Suppose $k$ satisfies the following property:

\vskip2mm

\noindent $(*)$ \parbox[t]{16cm}{If $D$ and $D'$ are central
division $k$-algebras of exponent 2 having the same maximal
subfields, then $D \simeq D'$ $($in other words, for any $D$ of
exponent 2, $\vert \gen(D) \cap {}_2\Br(k) \vert = 1)$.}

\vskip2mm

\noindent Then the field of rational functions $k(x)$ also satisfies
$(*)$.

\vskip2mm

\noindent {\rm (2)} \parbox[t]{16cm}{If $\vert \gen(D) \vert = 1$
for any central division $k$-algebra $D$ of exponent 2, then the
same is true for any central division $k(x)$-algebra of exponent
2.}}

\vskip3mm

\noindent {\bf Corollary \ref{C:Stab1}.} {\it Let $k$ be either a
finite field of characteristic $\neq 2$ or a number field, and $K =
k(x_1, \ldots , x_r)$ be a finitely generated purely transcendental
extension of $k$. Then for any central division $K$-algebra $D$ of
exponent 2, we have $\vert \gen(D) \vert = 1$.}

\vskip3mm

In \S\ref{S:Ell}, we will give explicit estimates on the size of the
genus of a quaternion algebra over the field of rational functions
of a split elliptic curve over a number field (cf. Theorem
\ref{T:Elliptic1}). Finally, in \S\S\ref{S:Gen}-\ref{S:Geom} we
discuss possible generalizations of the finiteness theorem for the
genus \cite{CRR} in the context of general absolutely almost simple
algebraic groups. More precisely, Conjecture 5.1 predicts the
finiteness of the genus, defined in terms of the isomorphism classes
of maximal $K$-tori, of an absolutely almost simple algebraic group
over a finitely generated field $K$ of ``good" characteristic --
Theorem \ref{T:Al} confirms this conjecture for inner forms of type
$\textsf{A}_{\ell}$. In \S\ref{S:Geom}, after a brief review of the
notion of weak commensurability and its connections with
length-commensurability of locally symmetric spaces (cf. \cite{PR1},
\cite{PR-Gen}), we formulate Conjecture 6.1 that asserts the
finiteness of the number of forms of a given absolutely simple
algebraic group over a finitely generated field $K$ of
characteristic zero that can contain a finitely generated
Zariski-dense subgroup with the trace field $K$ weakly commensurable
to a given finitely generated Zariski-dense subgroup (both
Conjecture 5.1 and 6.1 are true over number fields).

\section{Ramification places and the genus of a division algebra}\label{S:Ram}

Let $K$ be a field. Fix an integer $n > 1$ and let $V$ be a set of
discrete valuations of $K$ satisfying conditions (A), (B) and (C) of
\S\ref{S:Intro}. (We observe that if $K$ is generated over its prime
subfield by nonzero elements $a_1, \ldots , a_r,$ then using (A), one
can find a finite subset $V' \subset V$ such that $v(a_i) = 0$ for
all $v \in V \setminus V'$ and all $i = 1, \ldots , r$. Then $a_1,
\ldots , a_r$ lie in $\cO = \bigcap_{v \in V \setminus V'}
\cO_{K,v}$, and hence the fraction field of the latter coincides
with $K$. Thus, for a finitely generated field $K$, condition (B)
follows automatically from condition (A).)

\vskip2mm

\begin{prop}\label{P:R1}
Assume that $V$ satisfies conditions {\rm (A)}, {\rm (B)}, and {\rm
(C)}. Then for any $[A] \in {}_n\Br(K)$, the set $\Ram_V([A])$ is finite.
\end{prop}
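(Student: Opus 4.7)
The plan is to produce a nonzero $s \in \cO$ such that $A$ \emph{spreads out} to an Azumaya algebra over the localization $\cO[1/s]$. Once this is done, every $v \in V \setminus V'$ with $v(s) = 0$ will automatically yield an Azumaya extension of $A$ to $\cO_{K,v}$, forcing $\rho_v([A]) = 1$. Condition (A) guarantees that $V(s)$ is finite, so the ramification set will be contained in the finite set $V' \cup V(s)$.

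To carry out the spreading-out, I would pick a $K$-basis $e_1, \ldots, e_{m^2}$ of $A$ (with $m^2 = \dim_K A$) and record the structure constants $c_{ijk} \in K$ determined by $e_i e_j = \sum_k c_{ijk} e_k$. By condition (B), $K = \mathrm{Frac}(\cO)$, so some nonzero $s_1 \in \cO$ places all the $c_{ijk}$ in $\cO[1/s_1]$; the resulting free $\cO[1/s_1]$-algebra $\cA_1$ then satisfies $\cA_1 \otimes_{\cO[1/s_1]} K \cong A$. The natural map
$$
\cA_1 \otimes_{\cO[1/s_1]} \cA_1^{\mathrm{op}} \longrightarrow \mathrm{End}_{\cO[1/s_1]}(\cA_1),
$$
between free modules of the same rank $m^4$, is an isomorphism after base change to $K$ because $A$ is central simple. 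Its determinant with respect to the chosen bases is therefore a nonzero element of $\cO[1/s_1]$; clearing denominators produces an $s \in \cO$ (a multiple of $s_1$) such that base change to $\cO[1/s]$ turns the map into an isomorphism. The resulting algebra $\cA$ is Azumaya over $\cO[1/s]$ with generic fiber $A$.

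For each $v \in V \setminus V'$ satisfying $v(s) = 0$, the inclusion $\cO[1/s] \subseteq \cO_{K,v}$ allows me to base change $\cA$ to an Azumaya $\cO_{K,v}$-algebra lifting $A$. After passing to the completion $\widehat{K}_v$, the standard exact sequence describing the residue map (see \cite[Ch.~II, Appendix]{Serre}) shows that any Brauer class admitting such a lift satisfies $\rho_v([A]) = 1$, so $A$ is unramified at $v$. It follows that
$$
\Ram_V([A]) \subseteq V' \cup V(s),
$$
which is finite by (A) and (B).

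The main obstacle I anticipate is the spreading-out step itself: one must verify that after inverting a single well-chosen element of $\cO$, the free order $\cA_1$ genuinely becomes Azumaya — equivalently, that the Azumaya locus in $\mathrm{Spec}(\cO[1/s_1])$ is a nonempty Zariski-open subscheme containing the generic point. The subsequent control of ramified places is then essentially automatic from condition (A), and condition (C) is used only implicitly, to ensure that the residue map $\rho_v$ is defined on ${}_n\Br(K)$ in the first place.
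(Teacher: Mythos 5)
Your proposal is correct and follows essentially the same route as the paper: scale a basis so the structure constants land in $\cO$ (the paper takes $x_1=1$ explicitly, which you should also do so that $\cA_1$ is unital), observe that $\varphi\colon\cA\otimes\cA^{\mathrm{op}}\to\mathrm{End}(\cA)$ is an isomorphism after base change to $K$, invert one more element of $\cO$ to make $\varphi$ an isomorphism of finite free modules, and invoke (A) to bound $\Ram_V$ by $V'\cup V(s)$. The only cosmetic difference is that you extract the element to invert from the determinant of the matrix of $\varphi$, whereas the paper finds $t$ with $t\cdot\mathrm{End}_{\cO}\cA\subset\varphi(\cA\otimes_{\cO}\cA^{\mathrm{op}})$ directly; both produce the same open Azumaya locus.
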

\begin{proof}
Pick a finite set $V' \subset V$ as in (B), and set
$$
\cO = \bigcap_{v \in V \setminus V'} \cO_{K , v}.
$$
It is enough to show that the set of $v \in V \setminus V'$ where
$A$ ramifies is finite. Let $\dim_K A = \ell^2$. First, we note
that it is possible to find a basis $x_1 = 1, \ldots , x_{\ell^2}$
of $A$ over $K$ such that
$$
{\mathcal A} := \cO x_1 + \cdots + \cO x_{\ell^2}
$$
is a subring of $A$. Indeed, let $y_1, \ldots , y_{\ell^2}$ be an
arbitrary $K$-basis of $A$ with $y_1 = 1$. Then, we can write
\begin{equation}\label{E:R1}
y_iy_j = \sum_{k = 1}^{\ell^2} c^k_{ij} y_k \ \ \ \text{with} \ \
c^k_{ij} \in K.
\end{equation}
Since $K$ is the field of fractions of $\cO$, there exists $d \in \cO$
such that $d c^k_{ij} \in \cO$ for all $i, j$ and $k$. Multiplying
(\ref{E:R1}) by $d^2$, we obtain
$$
(dy_i)(dy_j) = \sum_{k = 1}^{\ell^2} (dc^k_{ij}) (d y_k),
$$
which implies that the basis $x_1 = 1, x_2 = dy_2, \ldots ,
x_{\ell^2} = d y_{\ell^2}$ is as required.

Now, for $v \in V \setminus V'$,
we set $$A_v = A \otimes_K K_v \ \ \text{and} \ \  \mathcal{A}_v =
\mathcal{A} \otimes_{\mathcal{O}} \mathcal{O}_v.$$ We have $A_v =
\mathcal{A}_v \otimes_{\mathcal{O}_v} K_v$, and furthermore if
$\mathcal{A}_v$ is an Azumaya algebra, then $A$ is unramified at $v$
(cf. \cite[\S 10]{Salt}). Since $A$ is a central simple algebra over
$K$, the canonical map $\varphi \colon A \otimes_K A^{\mathrm{op}}
\to \mathrm{End}_K\: A$  given by $$\sum_{i = 1}^r a_i \otimes b_i
\longrightarrow \left( x \mapsto \sum_{i = 1}^r a_ixb_i \right),$$
is an isomorphism. Identifying $\mathcal{A} \otimes_{\mathcal{O}}
\mathcal{A}^{\mathrm{op}}$ and $\mathrm{End}_{\mathcal{O}}\:
\mathcal{A}$ with $\mathcal{O}$-submodules of $A \otimes_K
A^{\mathrm{op}}$ and $\mathrm{End}_K A$, respectively, we observe
that the fact that $\varphi$ is an isomorphism, in conjunction with
the finite generation of the $\mathcal{O}$-module
$\mathrm{End}_{\mathcal{O}}\: \mathcal{A} \simeq
M_{\ell^2}(\mathcal{O})$, implies the existence of a nonzero $t \in
\mathcal{O}$ such that
\begin{equation}\label{E:t}
t \cdot \mathrm{End}_{\mathcal{O}}\: \mathcal{A} \subset
\varphi(\mathcal{A} \otimes_{\mathcal{O}}
\mathcal{A}^{\mathrm{op}}).
\end{equation}
Now, suppose $v \in V$ lies outside of the finite set $V' \cup
V(t)$, and let $\varphi_v$ be the map analogous to $\varphi$ for the
algebra $A_v$. Since $\mathrm{End}_{\mathcal{O}_v}\: \mathcal{A}_v =
(\mathrm{End}_{\mathcal{O}}\: \mathcal{A}) \otimes_{\mathcal{O}}
\mathcal{O}_v$ and $t \in \mathcal{O}_v^{\times}$, we conclude from
(\ref{E:t}) that
$$
\varphi_v(\mathcal{A}_v \otimes_{\mathcal{O}_v}
\mathcal{A}_v^{\mathrm{op}}) = \mathrm{End}_{\mathcal{O}_v}\:
\mathcal{A}_v.
$$
This means that $\mathcal{A}_v$ is an Azumaya $\mathcal{O}_v$-algebra, from which
our claim follows.
\end{proof}

\vskip2mm

The main result of this section is the following.
\begin{thm}\label{T:Ram1}
Assume that ${}_n\Br(K)_V$ is finite. Then for any
finite-dimensional central division $K$-algebra $D$ of exponent $n$,
the intersection $\gen(D) \cap {}_n\Br(K)$ is finite,  of size
$$
\vert \gen(D) \cap {}_n\Br(K) \vert \leqslant \vert {}_n\Br(K)_V
\vert \cdot \varphi(n)^r, \ \ \text{with} \ \ r = \vert \Ram_V(D)
\vert,
$$
where $\varphi$ is the Euler function. In particular, if $D$ has
degree $n$ then $$\vert \gen(D) \vert \leqslant \vert {}_n\Br(K)_V
\vert \cdot \varphi(n)^r.$$
\end{thm}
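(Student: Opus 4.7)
The plan is to separate each class $[D']\in \gen(D)\cap {}_n\Br(K)$ into two pieces of data: its tuple of residues at the ramification places of $D$, and its class modulo ${}_n\Br(K)_V$. The finiteness of ${}_n\Br(K)_V$ controls the second piece, while the genus hypothesis will force the first to take only a bounded number of values.

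The crux is the following local statement, which I would establish first: for every $[D']\in \gen(D)\cap {}_n\Br(K)$ and every $v\in V$, the residues $\rho_v([D'])$ and $\rho_v([D])$ generate the same cyclic subgroup of $\Hom(\mathcal{G}^{(v)},\Z/n\Z)$; in particular $\Ram_V(D')=\Ram_V(D)$. To prove this I would fix $v\in \Ram_V(D)$ and, via a weak-approximation-type argument combined with the local structure of $D_v$, construct a maximal subfield $L\subset D$ whose behavior at $v$ is tailored to the ramification character $\rho_v([D])$ --- for instance, so that $L_w/K_v$ (for the unique $w\mid v$ on $L$) realizes a prescribed cyclic extension pinning down $\rho_v([D])$. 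Because $L$ also embeds into $D'$ by the genus hypothesis, $L_w$ must split $D'_v$, and the resulting local splitting condition translates, through the description of ramification for tame central simple algebras over $K_v$, into the containment $\rho_v([D'])\in \langle \rho_v([D])\rangle$. Interchanging the roles of $D$ and $D'$ gives the reverse containment, hence equality of the two cyclic subgroups.

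Granted the local statement, the counting is mechanical. Set $R=\Ram_V(D)$, so $|R|=r$ by Proposition~\ref{P:R1}, and consider
\[
\Phi\colon \gen(D)\cap{}_n\Br(K)\longrightarrow \prod_{v\in R}\Hom(\mathcal{G}^{(v)},\Z/n\Z),\qquad [D']\longmapsto (\rho_v([D']))_{v\in R}.
\]
The local statement confines each $v$-component of the image to the set of generators of $\langle \rho_v([D])\rangle$; as a cyclic group of order $m_v\mid n$ has $\varphi(m_v)\leqslant \varphi(n)$ generators (using $\varphi(m_v)\mid\varphi(n)$ when $m_v\mid n$), one obtains $|\im\Phi|\leqslant \varphi(n)^r$. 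If two classes $[D'_1],[D'_2]$ share a fibre of $\Phi$, then $[D'_1]-[D'_2]$ has trivial residue at every $v\in R$, and since the local statement ensures $\Ram_V(D'_i)=R$, also at every $v\in V\setminus R$; thus $[D'_1]-[D'_2]\in {}_n\Br(K)_V$, so each fibre has size at most $|{}_n\Br(K)_V|$. Multiplying yields the desired inequality. The final assertion, for $D$ of degree $n$, is immediate: any $D'\in \gen(D)$ then has degree $n$ and hence exponent dividing $n$, so $\gen(D)\subseteq {}_n\Br(K)$.

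I expect the main obstacle to be the local statement, and in particular the construction of $L\subset D$ with prescribed local behavior at $v$. Realizing a specified local cyclic extension as the completion of a global maximal subfield of $D$ --- especially in the range where the degree of $D$ strictly exceeds its exponent $n$ --- requires carefully combining approximation arguments over the Dedekind-like ring $\cO$ of condition~(B) with the local theory of central simple algebras over $K_v$ provided by condition~(C). The subsequent passage from ``$L_w$ splits $D'_v$'' to ``$\rho_v([D'])\in\langle\rho_v([D])\rangle$'' must also be tracked explicitly through the residue map, and this is the one place in the argument where the genus hypothesis is genuinely used.
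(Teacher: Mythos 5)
Your counting step is essentially identical to the paper's, and the local statement you isolate is equivalent to the paper's Lemma~\ref{L:=} (equality of kernels of the residue characters $\chi_v$ and $\chi'_v$, which amounts to the same thing as equality of the cyclic subgroups they generate). What differs, and where there is a genuine gap, is the route to that local statement. The paper first transfers the global hypothesis to the completion: invoking \cite[Corollary~2.4]{RR}, it deduces that in $D\otimes_K K_v=M_\ell(\mathcal{D})$ and $D'\otimes_K K_v=M_{\ell'}(\mathcal{D}')$ one has $\ell=\ell'$ and $\mathcal{D},\mathcal{D}'$ have the same maximal subfields \emph{over $K_v$}. From that point on the argument is entirely local: Lemma~\ref{L:R2} compares the centers $\mathcal{E},\mathcal{E}'$ of the residue division algebras via Wadsworth's theory (inertial splitting, Skolem--Noether, the correspondence between $\mathcal{E}_i$ and $\mathrm{Ker}\,\rho_v([\mathcal{D}_i])$), yielding $\mathrm{Ker}\,\chi_v=\mathrm{Ker}\,\chi'_v$. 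No single global subfield with tailored local behavior is ever constructed.

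Your alternative --- build a maximal subfield $L\subset D$ whose completion at some $w\mid v$ ``realizes a prescribed cyclic extension pinning down $\rho_v([D])$,'' then use $L\hookrightarrow D'$ to control $\rho_v([D'])$ --- is the hard part, and as written it is not a proof. There are two concrete obstacles. First, the construction of $L$: you need a degree-$(\deg D)$ subfield of $D$ whose local factor at $v$ is a \emph{specific} extension of $K_v$ chosen to detect $\rho_v([D])$, and this is a Grunwald--Wang-type realization problem over a field subject only to conditions (A)--(C); you acknowledge this gap and do not fill it. (Also, $v$ need not have a unique extension $w$ to $L$ in general.) Second, even granted such an $L$, the passage from ``$L_w$ splits $\mathcal{D}'$'' to ``$\rho_v([D'])\in\langle\rho_v([D])\rangle$'' is not automatic and requires exactly the kind of residue-algebra analysis the paper packages in Lemma~\ref{L:R2}. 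The paper's reduction to the completion followed by a purely local valuation-theoretic argument sidesteps both difficulties. The remainder of your argument (the map $\Phi$, the bound $\varphi(m_v)\leqslant\varphi(n)$, fibres contained in cosets of ${}_n\Br(K)_V$, and the final degree-$n$ remark) is correct and matches the paper.
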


\vskip2mm

We begin the proof with the following generalization of Lemma 2.5 of
\cite{RR}, which establishes the desired conclusion without the
assumption that the residue field satisfies condition (LD)
introduced in {\it loc. cit.} We recall that given a
finite-dimensional central division algebra $\mathcal{D}$ over a
field $\mathcal{K}$ which is complete with respect to a discrete
valuation $v$, the valuation $v$ uniquely extends to a discrete
valuation $\tilde{v}$ of $\mathcal{D}$ (cf. \cite[Ch. XII, \S2]{Se},
\cite{W1}). Furthermore, the corresponding valuation ring
$\cO_{\mathcal{D}}$ has a unique maximal 2-sided ideal
$\mathfrak{P}_{\mathcal{D}}$ (the valuation ideal), and the quotient
$\overline{\mathcal{D}} =
\cO_{\mathcal{D}}/\mathfrak{P}_{\mathcal{D}}$ is a
finite-dimensional division (but not necessarily central) algebra,
called the {\it residue algebra}, over the residue field
$\overline{\mathcal{K}}$.
\begin{lemma}\label{L:R2}
Let $\mathcal{K}$ be a field complete with respect to a discrete
valuation $v$ with residue field $k$. Suppose $\mathcal{D}_1$
and $\mathcal{D}_2$ are two finite-dimensional central division
$\mathcal{K}$-algebras of degree $n$ prime to $\mathrm{char} \: k$,
and, for $i = 1,2$, let $\mathcal{E}_i$ be the center of the residue
algebra $\overline{\mathcal{D}}_i$.
If $\mathcal{D}_1$ and $\mathcal{D}_2$ have the same maximal
subfields, then $\mathcal{E}_1 = \mathcal{E}_2$.
\end{lemma}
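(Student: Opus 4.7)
The plan is to reduce the question to a statement about the residue algebras $\overline{\mathcal{D}}_i$ over $k$ via Hensel's lemma, and then to extract the centers $\mathcal{E}_i$ by exploiting that $\mathcal{E}_i/k$ is a cyclic Galois extension. The starting point is the standard structure of a tame central division algebra: since $\mathrm{char}\,k \nmid n$, each $\overline{\mathcal{D}}_i$ is a finite-dimensional central division algebra over $\mathcal{E}_i$ with $\mathcal{E}_i/k$ cyclic Galois of degree $e_i$, and $n = e_i d_i$ where $d_i = \deg_{\mathcal{E}_i}\overline{\mathcal{D}}_i$. A key fact I will use is that $\mathcal{E}_i$ equals the intersection (inside $\overline{\mathcal{D}}_i$) of all maximal subfields: every element of the division algebra $\overline{\mathcal{D}}_i$ lies in some maximal subfield, so anything contained in all of them commutes with everything and is thus central.

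Next I would establish a lifting correspondence between maximal subfields of $\overline{\mathcal{D}}_i$ and maximal inertial subfields of $\mathcal{D}_i$. Given a maximal subfield $L$ of $\overline{\mathcal{D}}_i$ (necessarily of $k$-degree $n$, since $L\supset \mathcal{E}_i$ and $[L:\mathcal{E}_i]=d_i$), choose a primitive element $\bar\alpha$ with separable minimal polynomial $\bar f(x)\in k[x]$, lift $\bar f$ to $f\in \cO_{\mathcal{K}}[x]$, and apply Hensel's lemma inside the complete discretely valued algebra $\mathcal{D}_i$ to produce $\alpha\in \cO_{\mathcal{D}_i}$ reducing to $\bar\alpha$; then $\tilde L := \mathcal{K}(\alpha)$ is a maximal inertial subfield of $\mathcal{D}_i$ with $\overline{\tilde L}=L$. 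Conversely, the residue of any maximal inertial subfield of $\mathcal{D}_i$ is a maximal subfield of $\overline{\mathcal{D}}_i$. Since the extension of $v$ from the complete field $\mathcal{K}$ to any finite extension is unique, unramifiedness of a subfield of $\mathcal{D}_i$ is intrinsic, so the hypothesis of the lemma yields that $\overline{\mathcal{D}}_1$ and $\overline{\mathcal{D}}_2$ share the same $k$-isomorphism classes of maximal subfields.

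To conclude $\mathcal{E}_1=\mathcal{E}_2$, fix a separable closure $k^{\mathrm{sep}}$ and view each $\mathcal{E}_i$ as the specific Galois subfield of $k^{\mathrm{sep}}$ cut out by the character $\rho_v([\mathcal{D}_i])$. For any maximal subfield $L$ of $\overline{\mathcal{D}}_1$ and any $k$-embedding $\phi\colon L\hookrightarrow k^{\mathrm{sep}}$, the image $\phi(L)$ must contain $\mathcal{E}_1$ (since $L\supset\mathcal{E}_1$ and $\mathcal{E}_1/k$ is Galois) and also $\mathcal{E}_2$ (since by the previous step $L$ is $k$-isomorphic to a maximal subfield of $\overline{\mathcal{D}}_2$, which contains $\mathcal{E}_2$, and Galois-ness forces any $k$-embedding of $\mathcal{E}_2$ into $k^{\mathrm{sep}}$ to land in the fixed subfield $\mathcal{E}_2$). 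Hence every such $\phi(L)$ contains the compositum $\mathcal{E}_1\mathcal{E}_2$, and transferring the identity $\bigcap_L L = \mathcal{E}_1$ (inside $\overline{\mathcal{D}}_1$) to its embedded avatars in $k^{\mathrm{sep}}$ yields $\bigcap_{L,\phi}\phi(L)=\mathcal{E}_1$; therefore $\mathcal{E}_1\mathcal{E}_2\subset\mathcal{E}_1$, i.e., $\mathcal{E}_2\subset\mathcal{E}_1$, and the symmetric argument gives the reverse containment. The main technical obstacle is this last transfer of the intersection identity into $k^{\mathrm{sep}}$: it requires exhibiting, for any $x\in k^{\mathrm{sep}}\setminus\mathcal{E}_1$, a maximal subfield of $\overline{\mathcal{D}}_1$ together with a $k$-embedding that omits $x$—a step that rests on the richness of the set of maximal subfields of a central division algebra.
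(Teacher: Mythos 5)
Your reduction to the residue algebras loses exactly the information the lemma needs, and the final step you flag as a ``technical obstacle'' is in fact false, not merely difficult. The implication you ultimately try to prove --- that if $\overline{\mathcal{D}}_1$ and $\overline{\mathcal{D}}_2$ have the same $k$-isomorphism classes of maximal subfields then $\mathcal{E}_1 = \mathcal{E}_2$ --- already fails for $k = \mathbb{R}$, $\overline{\mathcal{D}}_1 = \mathbb{H}$, $\overline{\mathcal{D}}_2 = \mathbb{C}$: both have $\mathbb{C}$ as their unique maximal subfield up to isomorphism, yet $\mathcal{E}_1 = \mathbb{R}$ while $\mathcal{E}_2 = \mathbb{C}$. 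In the same example, the ``transfer of the intersection identity'' breaks down concretely: inside $\mathbb{H}$ the intersection of all maximal subfields is $\mathbb{R}$, but every $\mathbb{R}$-embedding of a maximal subfield into $\mathbb{R}^{\mathrm{sep}} = \mathbb{C}$ has image $\mathbb{C}$, so $\bigcap_{L,\phi}\phi(L) = \mathbb{C} \neq \mathcal{E}_1$. There is simply no extension of $\mathbb{R}$ of degree $2$ omitting a given $x \in \mathbb{C}\setminus\mathbb{R}$, so the ``richness'' you invoke does not hold in general.

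The deeper reason your route cannot work is that passing to maximal subfields of the residue algebras amounts to retaining only the \emph{unramified} maximal subfields of $\mathcal{D}_1$ and $\mathcal{D}_2$, and these can match even when $\mathcal{E}_1 \neq \mathcal{E}_2$. The actual proof in the paper is built precisely to exploit the \emph{ramified} maximal subfields: assuming $\mathcal{E}_1 \not\hookrightarrow \mathcal{E}_2$, one embeds the inertia lift $\mathcal{L}_2$ of $\mathcal{E}_2$ into both algebras, observes (via the compatibility of residue maps under unramified base change) that the centralizer $\Delta_1$ of $\mathcal{L}_2^{(1)}$ in $\mathcal{D}_1$ is ramified over $\mathcal{L}_2$ while $\Delta_2$ is not, and then produces a maximal subfield $\mathcal{P}$ of $\mathcal{D}_1$ that is \emph{ramified} over $\mathcal{L}_2^{(1)}$; the hypothesis plus Skolem--Noether forces $\mathcal{P}$ into $\Delta_2$, contradicting the unramifiedness of $\Delta_2$. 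You would need to bring the ramified subfields back into the picture in some comparable way; the lifting correspondence alone cannot close the argument.
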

\begin{proof}
Recall that $\mathcal{E}_1$ and $\mathcal{E}_2$ are cyclic Galois
extensions of $k$ (cf. \cite[Proposition 2.5]{W2}). By symmetry, to
prove that $\mathcal{E}_1 = \mathcal{E}_2$, it suffices to prove the
existence of a $k$-embedding $\mathcal{E}_1 \hookrightarrow
\mathcal{E}_2$. Assume that there is no such embedding, and let
$\mathcal{L}_i$ denote the unramified extension of $\mathcal{K}$
with residue field $\mathcal{E}_i$. Then $\mathcal{L}_i$ is a cyclic
Galois extension of $\mathcal{K}$, and $\mathcal{L}_1
\not\hookrightarrow \mathcal{L}_2$. By construction, $\mathcal{L}_2$
embeds in $\mathcal{D}_2$ and, since $\mathcal{D}_1$ and
$\mathcal{D}_2$ have the same maximal subfields, $\mathcal{L}_2$
embeds in $\mathcal{D}_1$ as well. Let $\mathcal{L}^{(i)}_2$ be the
image of some $\mathcal{K}$-embedding $\mathcal{L}_2 \hookrightarrow
\mathcal{D}_i$, and let $\Delta_i$ be the centralizer of
$\mathcal{L}^{(i)}_2$ in $\mathcal{D}_i$. It is well-known (and
follows from the proof of the Double Centralizer Theorem) that
$\Delta_i$ is Brauer-equivalent to $\mathcal{D}_i
\otimes_{\mathcal{K}} \mathcal{L}^{(i)}_2$. We now observe that the
assumption that $\mathcal{L}_1 \not\hookrightarrow \mathcal{L}_2$
implies that the $\mathcal{L}_2$-algebra $\mathcal{D}_1
\otimes_{\mathcal{K}} \mathcal{L}^{(1)}_2$ is ramified with respect
to the extension $w$ of $v$ to $\mathcal{L}_2$. To see this, we will
use the following well-known statement.

\begin{thm}\label{T:Ram2}
{\rm (\cite[Theorem 10.4]{Salt})} Let $\mathcal{K}$ be a field
complete with respect to a discrete valuation $v$, with  residue
field $k$, and let $n > 1$ be an integer prime to $\mathrm{char} \:
k$. For a finite extension $\mathcal{L}/\mathcal{K}$, we let $w$ and
$\ell$ denote the extension of $v$ and the corresponding residue
field.
Then the following diagram
$$
\begin{array}{ccc}
{}_n\Br(\mathcal{L}) & \stackrel{\rho_w}{\longrightarrow} &
\mathrm{Hom}(\mathcal{G}^{(w)} , \Z/n\Z) \\
\uparrow & & \uparrow [e] \\
{}_n\Br(\mathcal{K}) & \stackrel{\rho_v}{\longrightarrow} &
\mathrm{Hom}(\mathcal{G}^{(v)} , \Z/n\Z)
\end{array}
$$
in which $\mathcal{G}^{(v)}$ and $\mathcal{G}^{(w)}$ are the
absolute Galois groups of $k$ and $\ell$, respectively, $\rho_v$ and
$\rho_w$ the corresponding residue maps, and $[e]$ denotes the
composition of the natural restriction map with multiplication by
the ramification index $e = e(w \vert v)$, commutes.
\end{thm}

It is well-known that the subgroup of the absolute Galois group
$\mathcal{G}^{(v)}$ of $k$ fixing $\mathcal{E}_1$ coincides with
$\mathrm{Ker} \: \rho_v([\mathcal{D}_1])$ (\cite[Theorem 3.5]{W2}).
So, the assumption that $\mathcal{E}_1 \not\hookrightarrow
\mathcal{E}_2$ means that the restriction of
$\rho_v([\mathcal{D}_1])$ to the subgroup $\mathcal{G}^{(w)}$ of
$\mathcal{G}^{(v)}$ corresponding to $\mathcal{E}_2$, is nontrivial.
Since $e(w \vert v) = 1$, Theorem \ref{T:Ram2} implies that
$\rho_w([\mathcal{D}_1 \otimes_{\mathcal{K}} \mathcal{L}_2])$ is
nontrivial, i.e. $\mathcal{D}_1 \otimes_{\mathcal{K}}
\mathcal{L}^{(1)}_2$ is ramified at $w$, as claimed.

Let $\tilde{w}$ be the extension of $w$ to $\Delta_1$. Since
$[\Delta_1] = [\mathcal{D}_1 \otimes_{\mathcal{K}}
\mathcal{L}^{(1)}_2]$ is ramified at $w$, the ramification index
$e(\tilde{w} \vert w)$ is $> 1$ (cf. \cite[Theorem 3.4]{W2}; note
that being of degree prime to $\mathrm{char} \: k$, the division
algebra $\Delta_1$ is ``inertially split"). It follows that
$\Delta_1$ contains a maximal subfield $\mathcal{P}$ which is
ramified over $\mathcal{L}^{(1)}_2$.  By our assumption,
$\mathcal{P}$  embeds into $\mathcal{D}_2,$ and moreover, by the
Skolem-Noether theorem, we may assume that this embedding maps
$\mathcal{L}^{(1)}_2$ onto $\mathcal{L}^{(2)}_2$, hence its image
(which we will also denote by $\mathcal{P}$) is contained in
$\Delta_2$. Since $\mathcal{P}/\mathcal{L}^{(2)}_2$ is ramified, we
conclude that the $\mathcal{L}_2$-algebra $\Delta_2$, and hence also
$\mathcal{D}_2 \otimes_{\mathcal{K}} \mathcal{L}_2$, is ramified
with respect to $w$. On the other hand, it follows from Theorem
\ref{T:Ram2} that $\mathcal{D}_2 \otimes_{\mathcal{K}}
\mathcal{L}_2$ is unramified at $w$, a contradiction.
\end{proof}

\begin{lemma}\label{L:=}
Let $D$ and $D'$ be central division $K$-algebras such that $[D] \in
{}_n\Br(K)$ and $[D'] \in \gen(D) \cap {}_n\Br(K)$. Given $v \in V$,
we let $\chi_v$ and $\chi_v' \in \mathrm{Hom}(\mathcal{G}^{(v)} ,
\Z/n\Z)$ denote the images of $[D]$ and $[D']$, respectively, under the residue map $\rho_v$.
Then
$$
\mathrm{Ker}\: \chi_v = \mathrm{Ker} \chi_v'
$$
for all $v \in V$.
\end{lemma}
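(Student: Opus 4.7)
Fix $v \in V$ and work in the completion $\mathcal{K} = K_v$. Write $D \otimes_K \mathcal{K} \simeq M_a(\mathcal{D})$ and $D' \otimes_K \mathcal{K} \simeq M_{a'}(\mathcal{D}')$, with $\mathcal{D}$, $\mathcal{D}'$ central division $\mathcal{K}$-algebras of degrees $s, s'$ dividing $n$. Brauer-equivalence and the compatibility of residue maps under base change give $\chi_v = \rho_v([\mathcal{D}])$ and $\chi_v' = \rho_v([\mathcal{D}'])$, and the explicit description of the tame residue map (\cite[Theorem~3.5]{W2}) identifies $\mathrm{Ker}\,\chi_v$ with the absolute Galois group of the center $\mathcal{E}$ of the residue algebra $\overline{\mathcal{D}}$, and similarly $\mathrm{Ker}\,\chi_v'$ with the absolute Galois group of $\mathcal{E}' = Z(\overline{\mathcal{D}'})$. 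The lemma therefore reduces to the equality $\mathcal{E} = \mathcal{E}'$, which I would prove by verifying the hypotheses of Lemma~\ref{L:R2} for the pair $\mathcal{D}, \mathcal{D}'$.

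\textbf{Matching maximal subfields locally.} The crux is to show that $\mathcal{D}$ and $\mathcal{D}'$ have the same maximal $\mathcal{K}$-subfields; this forces $s = s'$ and Lemma~\ref{L:R2} then yields $\mathcal{E} = \mathcal{E}'$ directly. By symmetry it suffices to embed each maximal subfield $\mathcal{M} \subset \mathcal{D}$ into $\mathcal{D}'$. Choose a primitive element $\alpha \in \mathcal{M}$ with separable minimal polynomial $f \in \mathcal{K}[x]$ of degree $s$ (separability being ensured by condition (C), since $s$ is prime to $\mathrm{char}\:\overline{K}_v$). Since $K$ is dense in $\mathcal{K}$, Krasner's lemma supplies an irreducible separable $g \in K[x]$ of degree $s$ with a root $\beta$ such that $\mathcal{K}(\beta) \simeq \mathcal{K}(\alpha) = \mathcal{M}$; equivalently, $\mathcal{M}$ appears as a direct factor of $K(\beta) \otimes_K \mathcal{K}$. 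I would then extend $K(\beta)$ to a maximal subfield $P \subset D$---equivalently, to a degree-$n$ splitting field of $D$---whose completion at the place of $P$ singled out by $\beta$ is exactly $\mathcal{M}$. Since $[D'] \in \gen(D)$, the field $P$ also embeds in $D'$; localizing this embedding and projecting onto the simple component corresponding to $\beta$ produces the desired $\mathcal{K}$-embedding $\mathcal{M} \hookrightarrow \mathcal{D}'$.

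\textbf{Main obstacle.} The principal technical difficulty lies in the construction of the global maximal subfield $P \subset D$ with prescribed $v$-adic behavior: one must produce an irreducible separable polynomial over $K$ of degree $n$ whose factorization at $v$ has $g$ as one of its irreducible factors and whose root generates a splitting field of $D$. This requires a weak-approximation-style argument, combined with the standard identification of maximal subfields of a central division algebra of degree $n$ as precisely its degree-$n$ splitting subfields; the finiteness statement in condition (A) guarantees enough flexibility to avoid interference from finitely many troublesome places while prescribing the local behavior at $v$.
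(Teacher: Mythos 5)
Your reduction is exactly the one the paper uses: pass to the completion $\mathcal{K}=K_v$, identify $\mathrm{Ker}\,\chi_v$ and $\mathrm{Ker}\,\chi'_v$ with the absolute Galois groups of the centers $\mathcal{E},\mathcal{E}'$ of the residue division algebras via \cite[Theorem 3.5]{W2}, and reduce to showing $\mathcal{E}=\mathcal{E}'$, which follows from Lemma~\ref{L:R2} once one knows $\mathcal{D}$ and $\mathcal{D}'$ have the same maximal $\mathcal{K}$-subfields and hence the same degree. The paper gets that last fact in one line by citing \cite[Corollary~2.4]{RR}; you instead try to re-derive it, and that is where the argument breaks down.

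The construction you propose for the global maximal subfield $P\subset D$ does not work as described. You want to first manufacture an irreducible degree-$n$ polynomial over $K$ with a prescribed factor $g$ over $\mathcal{K}$, and only then arrange that ``its root generates a splitting field of $D$.'' But being a splitting field of $D$ is a genuinely global condition on $P$, governed by the class $[D]\in\Br(K)$ over the arbitrary finitely generated field $K$; it is not something that can be imposed a posteriori by controlling the shape of the polynomial at $v$ and finitely many auxiliary places. Condition (A) is a statement about finiteness of $V(a)$ for $a\in K^\times$ and gives no leverage here --- there is no local-to-global principle for splitting fields of $D$ available in this generality (that would be an Albert--Brauer--Hasse--Noether--type statement, which one precisely does \emph{not} have over $K$). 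An intermediate difficulty you also skip: the subfield $K(\beta)$ of degree $s$ you produce via Krasner must already embed in $D$ before it can be ``extended to'' a maximal subfield, and $\mathcal{K}(\beta)\simeq\mathcal{M}\hookrightarrow\mathcal{D}$ does not imply $K(\beta)\hookrightarrow D$.

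The correct way to carry out this step (and, in substance, what \cite[Corollary 2.4]{RR} does) is to run the Krasner argument \emph{inside $D$} rather than on polynomials. A maximal subfield $\mathcal{M}$ of $\mathcal{D}$ yields the maximal \'etale $\mathcal{K}$-subalgebra $\mathcal{M}^{\ell}\subset M_\ell(\mathcal{D})=D\otimes_K\mathcal{K}$; pick a generator $\alpha$ of it over $\mathcal{K}$ (separable, as $n$ is prime to the residue characteristic). Since $K$ is dense in $\mathcal{K}$, the $K$-subspace $D$ is dense in the $n^2$-dimensional $\mathcal{K}$-space $D\otimes_K\mathcal{K}$, so one can pick $a\in D$ arbitrarily close to $\alpha$. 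Then $K[a]$ is a commutative finite-dimensional $K$-subalgebra of the \emph{division} algebra $D$, hence automatically a field; by Krasner-type continuity it has degree $n$ over $K$ and satisfies $K[a]\otimes_K\mathcal{K}=\mathcal{K}[a]\simeq\mathcal{K}[\alpha]=\mathcal{M}^\ell$. This $P:=K[a]$ is the desired maximal subfield with the prescribed $v$-adic completion, and the rest of your transfer argument through the genus hypothesis then goes through. With the perturbation done this way --- producing the element of $D$ first and reading off its polynomial, rather than prescribing the polynomial first --- the proof matches the paper's.
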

\begin{proof} Write
$$
D \otimes_K K_v = M_{\ell}(\mathcal{D}) \ \ \ \text{and} \ \ \ D'
\otimes_K K_v = M_{\ell'}(\mathcal{D}'),
$$
where $\mathcal{D}$ and $\mathcal{D}'$ are central division algebras
over $\mathcal{K} = K_v$. According to \cite[Corollary 2.4]{RR}, we
have $\ell = \ell'$ and $\mathcal{D}$ and $\mathcal{D}'$ have the
same maximal subfields. Letting $\mathcal{E}$ and $\mathcal{E}'$
denote the centers of the residue algebras $\overline{\mathcal{D}}$
and $\overline{\mathcal{D}}'$, respectively, we infer from Lemma
\ref{L:R2} that $\mathcal{E} = \mathcal{E}'$ (note that the lemma
applies since by assumption $n$ is relatively prime to
$\mathrm{char}\: \overline{K}_v$). On the other hand, as we already
mentioned in the proof of Lemma \ref{L:R2}, $\mathrm{Ker}\: \chi_v$
and $\mathrm{Ker}\: \chi'_v$ are precisely the subgroups of
$\mathcal{G}^{(v)}$ corresponding to $\mathcal{E}$ and
$\mathcal{E}'$, respectively (cf. \cite[Theorem 3.5]{W2}). So, our
claim follows.
\end{proof}

\vskip2mm



\noindent {\it Proof of Theorem \ref{T:Ram1}.} Suppose that $[D']
\in \gen(D) \cap {}_n\Br(K)$. Fix $v \in V$, and set $\chi_v =
\rho_v([D])$ and $\chi'_v = \rho_v([D'])$. According to Lemma
\ref{L:=}, we have
\begin{equation}\label{E:R4}
\mathrm{Ker}\: \chi_v = \mathrm{Ker}\: \chi'_v.
\end{equation}
Let $m$ be the order of $\chi_v$. Any character $\chi'_v$ of
$\mathcal{G}^{(v)}$ satisfying (\ref{E:R4}) can be viewed as a
faithful character of the order $m$ cyclic group
$\mathcal{G}^{(v)}/\mathrm{Ker}\: \chi_v$, and therefore there are
$\varphi(m)$ possibilities for $\chi'$. So,
$$
\vert \rho_v(\gen(D) \cap {}_n\Br(K)) \vert \leqslant \varphi(m)
\leqslant \varphi(n)
$$
for any $v \in V$ (as $m$ divides $n$), and $$\rho_v(\gen(D) \cap
{}_n\Br(K)) = \{1\}$$ if $\rho_v([D]) = 1$.

Now, since, by Proposition \ref{P:R1}, any division algebra ramifies at a finite number of places, we can consider the map
\begin{equation}\label{E:rhoV}
\rho_V \colon {}_n\Br(K) \longrightarrow \bigoplus_{v \in V}
\mathrm{Hom}(\mathcal{G}^{(v)} , \Z/n\Z), \ \ \rho = (\rho_v).
\end{equation}
Our previous discussion shows that
$$
\vert \rho_V(\gen(D) \cap {}_n\Br(K)) \vert \leqslant \varphi(n)^r
$$
where $r = \vert \Ram_V(D) \vert$. By definition $\mathrm{Ker}\:
\rho_V = {}_n\Br(K)_V$, so we obtain the required estimate.

\vskip1mm

Now, if $D$ has degree $n$, then clearly $\gen(D) \subset
{}_n\Br(K)$, and our second assertion follows from the first one.
\hfill $\Box$

\vskip2mm

\noindent {\bf Remark 2.6.} 1. Our proof of Theorem \ref{T:Ram1}
actually leads to the following somewhat stronger assertion, which
will be used in \S \ref{S:Gen}. Let $K$ be a field, and $V$ be a set
of discrete valuations of $K$ satisfying conditions (A), (B) and (C)
for a given integer $n > 1$. For a  central division algebra $D$ of
degree $n$ over $K$, we define the {\it local genus} $\gen_V(D)$ of
$D$ with respect to $V$ as the collection of classes $[D'] \in
\Br(K),$ where $D'$ is a central division algebra $K$-algebra of
degree $n$ such that for any $v \in V$, if one writes $D \otimes_K
K_v = M_{\ell}(\mathcal{D})$ and $D' \otimes_K K_v =
M_{\ell'}(\mathcal{D}')$ where $\mathcal{D}$ and $\mathcal{D}'$ are
central division algebras over $K_v$, then $\ell = \ell'$ and
$\mathcal{D}$ and $\mathcal{D}'$ have the same maximal separable
subfields. If $n$ is prime to $\mathrm{char} \: K$ and
${}_n\Br(K)_V$ is finite, then $\gen_V(D)$ is finite for any central
division $K$-algebra $D$ of degree $n$.

\vskip1mm

2. Lemma \ref{L:R2} remains valid (and so do its consequences) if
one replaces the assumption that the degree $n$ is prime to
$\mathrm{char} \: k$ by the weaker assumption that $\mathcal{E}_1$
and $\mathcal{E}_2$ are separable extensions of $k$. This makes
$\mathcal{D}_1$ and $\mathcal{D}_2$ ``inertially split," and the
argument goes through without any significant changes.

\section{The genus over the function fields of curves}\label{S:Stab}

We will now apply the results of \S\ref{S:Ram} in the case where $K
= k(C)$ is the field of rational functions on a smooth absolutely
irreducible projective curve $C$ over a field $k$, and $V$ is the
set of all geometric places of $K$, i.e. those discrete valuations
of $K$ that are trivial on $k$. If $n > 1$ is an integer prime to
$\mathrm{char} \: k$, then it is clear that $V$ satisfies conditions
(A), (B) and (C). The corresponding unramified Brauer ${}_n\Br(K)_V$
will then, following tradition, be denoted by
${}_n\Br(K)_{\mathrm{ur}}$ (it is known that this is precisely the
$n$-torsion subgroup of the Brauer group of the curve $C$, cf.
\cite{L}). Note that there is a natural map $\iota_k \colon
{}_n\Br(k) \to {}_n\Br(K)_{\mathrm{ur}}$. The following theorem
provides an estimation of the size of $\gen(D) \cap {}_n\Br(K)$ for
a central division $K$-algebra $D$ of exponent $n$ in certain
situations.

%
%
%
%

\begin{thm}\label{T:4}
Let $n > 1$ be an integer prime to $\mathrm{char}\: k$. Assume that

\vskip2mm

\noindent $\bullet$ the set $C(k)$ of rational points is infinite;

\vskip1mm

\noindent $\bullet$ \parbox[t]{16cm}{$\vert
{}_n\Br(K)_{\mathrm{ur}}/\iota_k({}_n\Br(k)) \vert =: M < \infty$.}

\vskip2mm

\noindent Then

\vskip2mm

\noindent {\rm (1)} \parbox[t]{16cm}{if there exists $N < \infty$
such that $$\vert \gen(\Delta) \cap {}_n\Br(k) \vert \leqslant N$$
for any central division $k$-algebra $\Delta$ of exponent $n$, then
for any central division $K$-algebra $D$ of exponent $n$ we have
$$\vert \gen(D) \cap {}_n\Br(K) \vert \leqslant M \cdot N \cdot
\varphi(n)^r,$$ where $r = \vert \mathrm{Ram}_V(D) \vert$;}

\vskip2mm

\noindent {\rm (2)} \parbox[t]{16cm}{if $\gen(\Delta) \cap
{}_n\Br(k)$ is \emph{finite} for any central division $k$-algebra
$\Delta$ of exponent $n$, then $\gen(D)~\cap~{}_n\Br(K)$ is
\emph{finite} for any central division $K$-algebra $D$ of exponent
$n$.}
\end{thm}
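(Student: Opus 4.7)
The plan is to refine Theorem \ref{T:Ram1} by coupling its ramification analysis with a specialization argument at a $k$-rational point $x \in C(k)$. Theorem \ref{T:Ram1} already yields
\begin{equation*}
|\gen(D) \cap {}_n\Br(K)| \leq |{}_n\Br(K)_{\mathrm{ur}}| \cdot \varphi(n)^r,
\end{equation*}
but the hypotheses here control only the quotient $M = |{}_n\Br(K)_{\mathrm{ur}}/\iota_k({}_n\Br(k))|$, not $|{}_n\Br(K)_{\mathrm{ur}}|$ itself; the role of the specialization will be to replace the missing $\iota_k({}_n\Br(k))$-contribution by the base-field genus bound $N$.

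Retracing the proof of Theorem \ref{T:Ram1}, the residue map $\rho_V$ assumes at most $\varphi(n)^r$ distinct values on $\gen(D) \cap {}_n\Br(K)$, and its fibres are cosets of ${}_n\Br(K)_V = {}_n\Br(K)_{\mathrm{ur}}$. So it suffices to bound the intersection of each fibre with $\gen(D) \cap {}_n\Br(K)$ by $M \cdot N$. By Lemma \ref{L:=}, every $[D'] \in \gen(D) \cap {}_n\Br(K)$ has $\Ram_V(D') = \Ram_V(D)$ (since unramifiedness is triviality of the residue character, and the two characters share a kernel); using that $C(k)$ is infinite, I pick $x \in C(k) \setminus \Ram_V(D)$, so that $D$ and every $D'$ in the genus are simultaneously unramified at $x$. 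Reducing the Azumaya model at $\cO_{K,x}$ modulo the maximal ideal then produces a homomorphism $s_x \colon {}_n\Br(K)_{\mathrm{ur}} \to {}_n\Br(k)$ satisfying $s_x \circ \iota_k = \id$, hence a splitting
\begin{equation*}
{}_n\Br(K)_{\mathrm{ur}} = \iota_k({}_n\Br(k)) \oplus \ker(s_x), \qquad |\ker(s_x)| = M.
\end{equation*}
Within a fixed $\rho_V$-fibre, the $\ker(s_x)$-component of $[D']$ takes at most $M$ values while its $\iota_k({}_n\Br(k))$-component is determined by $s_x([D'])$; thus each fibre contributes at most $M \cdot |s_x(\gen(D) \cap {}_n\Br(K))|$ elements, and everything comes down to bounding this last image by $N$.

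The main step, on which the whole proof hinges, is to show that $s_x$ sends $\gen(D) \cap {}_n\Br(K)$ into $\gen(\Delta_x) \cap {}_n\Br(k)$, where $\Delta_x$ denotes the underlying central division $k$-algebra of the residue of $D$ at $x$. Writing $D \otimes_K K_x = M_\ell(\mathcal{D})$ and $D' \otimes_K K_x = M_{\ell'}(\mathcal{D}')$, \cite[Corollary 2.4]{RR} (invoked already in Lemma \ref{L:=}) gives $\ell = \ell'$ and shows that $\mathcal{D}, \mathcal{D}'$ share their maximal subfields over $K_x$; in particular $\Delta_x$ and $\Delta'_x$ have the same degree $d/\ell$ over $k$. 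Given a maximal subfield $\tilde L \subset \Delta_x$, I would lift it to the unramified extension $L_x/K_x$ with residue $\tilde L$: since $\mathcal{D}$ is unramified, $L_x$ is a maximal subfield of $\mathcal{D}$, so by \cite[Corollary 2.4]{RR} it embeds as a maximal subfield of $\mathcal{D}'$ as well; restricting the unique valuation of $\mathcal{D}'$ to this embedded copy of $L_x$ and passing to residues then yields $\tilde L \hookrightarrow \Delta'_x$, maximal by degree count. Symmetry gives equality of the maximal-subfield sets, and the exponent of $\Delta'_x$ divides that of $D'$, so $[\Delta'_x] \in \gen(\Delta_x) \cap {}_n\Br(k)$, bounded by $N$. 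The hard part is this lift-embed-descend chain for maximal subfields; the rest is bookkeeping. Assertion (2) is proved by exactly the same scheme, replacing each uniform bound throughout by a finiteness statement for the corresponding factor in $M \cdot N \cdot \varphi(n)^r$.
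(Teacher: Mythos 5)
Your proposal is correct and follows essentially the same route as the paper: the same specialization map at a rational point $x \in C(k)$ unramified for $D$ (the paper calls it $\nu_v$, you call it $s_x$), and the same key lemma that this map carries $\gen(D) \cap {}_n\Br(K)$ into $\gen(\Delta_x) \cap {}_n\Br(k)$, proved by lifting a maximal subfield of the residue algebra to an unramified maximal subfield of $\mathcal{D}$, transporting it to $\mathcal{D}'$ via \cite[Corollary 2.4]{RR}, and reducing. The only difference is bookkeeping: the paper first passes to cosets modulo $\iota_k({}_n\Br(k))$ (of which there are $\leq M\varphi(n)^r$) and bounds each by $N$ via injectivity of $\nu_v$ on the translated coset, whereas you work directly with the $\leq \varphi(n)^r$ cosets modulo ${}_n\Br(K)_{\mathrm{ur}}$, split each via ${}_n\Br(K)_{\mathrm{ur}} = \iota_k({}_n\Br(k)) \oplus \ker(s_x)$, and bound the two factors by $N$ and $M$ respectively -- this yields the same inequality.
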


\begin{proof}
Let $D$ be a finite-dimensional central division $K$-algebra such
that $[D] \in {}_n\Br(K)$, and set $r = \vert \mathrm{Ram}_{V}(D)
\vert.$ Arguing as in the proof of Theorem \ref{T:Ram1}, we see that
in the notations introduced therein, we have
$$\vert \rho_V(\gen(D) \cap {}_n\Br(K)) \vert \leqslant
\varphi(n)^r,$$ i.e.,  $\gen(D) \cap {}_n\Br(K)$ is contained in a
union of $\leqslant \varphi(n)^r$ cosets modulo
${}_n\Br(K)_{\mathrm{ur}}$.
It follows that $\gen(D) \cap {}_n\Br(K)$ is contained in a union of
$\leqslant M \cdot \varphi(n)^r$ cosets modulo
$\iota_k({}_n\Br(k))$. Thus, it is enough to show that for $[D'] \in
\gen(D)$ of exponent $n$, the intersection $\gen(D) \cap ([D'] \cdot
\iota_k({}_n\Br(k)))$ is

\vskip2.5mm

\noindent $\bullet$ \parbox[t]{16cm}{{\it finite} if $\gen(\Delta)
\cap {}_n\Br(k)$ is finite for any central division $k$-algebra
$\Delta$ with $[\Delta] \in {}_n\Br(k)$;}

\vskip1.5mm

\noindent $\bullet$ \parbox[t]{16cm}{{\it of size} $\leqslant N$ if
$N < \infty$ has the property that $\vert \gen(\Delta) \cap
{}_n\Br(k) \vert \leqslant N$ for any $\Delta$ as above.}

\vskip2.5mm


\noindent Notice that for $[D'] \in \gen(D)$, we have $\gen(D') =
\gen(D)$, so, to simplify our notations, we may replace $D'$ by $D$.
Then, our problem reduces to proving the above two statements for
the intersection $$\Lambda := \gen(D) \cap ([D] \cdot
\iota_k({}_n\Br(k))),$$ where $D$ is any finite-dimensional central
division $K$-algebra such that $[D] \in {}_n\Br(K)$.

\vskip2mm

For this, we first recall (cf., for example, \cite[Ch. XII, \S
3]{Se} or \cite[\S3]{W2}) that given a field $\mathcal{K}$ complete
with respect to a discrete valuation $v$ and an integer $n
> 1$ prime to the characteristic of the residue field
$\overline{\mathcal{K}}$, there is a natural isomorphism $\nu$
between the unramified Brauer group ${}_n\Br(\mathcal{K})_{\{v\}}$
and ${}_n\Br(\overline{\mathcal{K}})$. This isomorphism can be
described as follows: if $[\mathcal{D}] \in {}_n\Br(\mathcal{K})_{\{
v \}}$ is represented by a central division $\mathcal{K}$-algebra
$\mathcal{D}$, then the residue division algebra
$\overline{\mathcal{D}}$ is central over $\overline{\mathcal{K}}$ and
$\nu([\mathcal{D}]) = [\overline{\mathcal{D}}]$.

\vskip1mm

Now let $D$ be a finite-dimensional central division algebra over
$K$ such that $[D] \in {}_n\Br(K)$. Since $C(k)$ is infinite, we can
pick $v \in V$ such that $\overline{K}_v = k$ and $D$ is unramified
at $v.$ Let $\mathcal{K} = K_v$ be the completion of $K$ with
respect to $v$, and define $\nu_v \colon {}_n\Br(K)_{\{v\}} \to
{}_n\Br(k)$ to be the composition of the natural map
${}_n\Br(K)_{\{v\}} \to {}_n\Br(\mathcal{K})_{\{v\}}$ with the
isomorphism $\nu \colon {}_n\Br(\mathcal{K})_{\{v\}} \to {}_n\Br(k)$
constructed above (note that $\overline{\mathcal{K}} = k$). Pick any
$[D'] \in \gen(D) \cap {}_n\Br(K)$, and write
$$
D \otimes_K \mathcal{K} = M_{\ell}(\mathcal{D}) \ \ \ \text{and} \ \
\ D' \otimes_K \mathcal{K} = M_{\ell'}(\mathcal{D}'),
$$
with $\mathcal{D}$ and $\mathcal{D}'$  division
$\mathcal{K}$-algebras. As in the proof of Lemma \ref{L:=}, we infer
from \cite[Corollary 2.4]{RR} that $\ell = \ell'$ and $\mathcal{D}$
and $\mathcal{D}'$ have the same maximal subfields. Furthermore, it
follows from Lemma \ref{L:=} that $D'$ (equivalently,
$\mathcal{D}'$) is unramified at $v$.
Let $\Delta = \overline{\mathcal{D}}$ and $\Delta' =
\overline{\mathcal{D}}'$ be the corresponding residue algebras
(which are central division algebras over $k$ of the same dimension
$d^2 = \dim_{\mathcal{K}} \mathcal{D}= \dim_{\mathcal{K}}
\mathcal{D}'$). We claim that $\Delta$ and $\Delta'$ have the same
maximal subfields.
Indeed, let $\cO_{\mathcal{D}}$ and $\cO_{\mathcal{D}'}$ be the
valuation rings in $\mathcal{D}$ and $\mathcal{D}'$, respectively,
and let $\cO_{\mathcal{D}} \to \Delta$ and $\cO_{\mathcal{D}'} \to
\Delta'$ be the corresponding reduction maps (denoted $x \mapsto
\bar{x}$). Let $P$ be a maximal subfield of $\Delta$. Since $d$
divides $n$, hence is prime to $\mathrm{char}\: k$, the extension $P/k$
is separable, and therefore we can find $a \in \cO_{\mathcal{D}}$
such that $P = k(\bar{a})$. Set $F = K(a)$. We have
$$
d \geqslant [F : K] \geqslant [P : k] = d,
$$
which implies that $F$ is a maximal subfield of $\mathcal{D}$. By
our assumption, $F$ admits a $\mathcal{K}$-embedding into
$\mathcal{D}'$, and we let $b$ denote the image of $a$ under this
embedding. Then the subfield $P' = k(\bar{b})$ of $\Delta'$ is
$k$-isomorphic to $P$ and is maximal as $\dim_k \Delta' = d^2$.
Conversely, any maximal subfield of $\Delta'$ is $k$-isomorphic to a
maximal subfield of $\Delta$. This argument shows that
\begin{equation}\label{E:nuV}
\nu_v(\gen(D) \cap {}_n\Br(K)) \subset \gen(\Delta) \cap {}_n\Br(k).
\end{equation}
Since the composition $\nu_v \circ \iota_k$ coincides with the
identity map on ${}_n\Br(k)$, we conclude that the restriction of
$\nu_v$ to $[D]^{-1} \cdot \Lambda \subset \iota_k({}_n\Br(k))$ is
injective. On the other hand, it follows from (\ref{E:nuV}) that
$$
\nu_v([D]^{-1} \cdot \Lambda) \subset [\Delta]^{-1} \cdot
(\gen(\Delta) \cap {}_n\Br(k)).
$$
This yields both of the required facts for $\Lambda$ and concludes the proof.
\end{proof}

\vskip2mm

Next, we would like to point out a minor modification of Theorem
\ref{T:4}, which, under somewhat stronger assumptions, allows one to actually
bound the size of $\gen(D)$ and not just that of the intersection
$\gen(D) \cap {}_n\Br(K)$. Again, let $D$ be a central division
algebra over $K = k(C)$ of dimension $\dim_K D = \ell^2$, and
assume that $[D]$ has exponent $n$ in $\Br(K)$ (of course, $n \vert
\ell$, and, moreover, $n$ and $\ell$ have the same prime factors -
cf. \cite[Proposition 4.5.13]{GiSz}). Clearly $\gen(D) \subset
{}_{\ell}\Br(K)$. Now, if, as before, $n$ (and hence $\ell$) is
prime to $\mathrm{char}\: k$, then the map $\rho_V$ on ${}_n\Br(K)$
constructed in the proof of Theorem \ref{T:Ram1} extends to an
analogous map $\tilde{\rho}_V$ on ${}_{\ell}\Br(K)$, and we still
have the estimate
$$
\vert \tilde{\rho}_V(\gen(D)) \vert \leqslant \varphi(n)^r.
$$
Repeating almost verbatim the rest of the proof of Theorem \ref{T:4}, we
obtain

\vskip1mm

\begin{thm}\label{T:4'}
With notations as above, assume that

\vskip2mm

\noindent $\bullet$ the set $C(k)$ of $k$-rational points is
infinite;

\vskip1mm

\noindent $\bullet$ $\vert
{}_{\ell}\Br(K)_{\mathrm{ur}}/\iota_k({}_{\ell}\Br(k)) \vert =: M <
\infty$.

\vskip2mm

\noindent Then

\vskip2mm

\noindent {\rm (1)} \parbox[t]{16cm}{if there exists $N < \infty$
such that $\vert \gen(\Delta) \vert \leqslant N$ for any central
division $k$-algebra $\Delta$ of degree dividing $\ell$, then for
any central division $K$-algebra $D$ of degree dividing $\ell$, we have
$$
\vert \gen(D) \vert \leqslant M \cdot N \cdot \varphi(n)^r,
$$
where $r = \mathrm{Ram}_V (D)$;}

\vskip2mm

\noindent {\rm (2)} \parbox[t]{16cm}{if $\gen(\Delta)$ is finite for
any central division $k$-algebra $\Delta$ of degree dividing $\ell$,
then $\gen(D)$ is finite for any central division $K$-algebra of degree dividing $\ell.$}
\end{thm}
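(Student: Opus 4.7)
The plan is to follow the template of Theorem \ref{T:4}, adapting it to the slightly larger group ${}_\ell\Br(K)$ in which $\gen(D)$ naturally lives. The preliminary estimate $|\tilde\rho_V(\gen(D))| \leqslant \varphi(n)^r$, already noted above, implies that $\gen(D)$ is contained in at most $\varphi(n)^r$ cosets modulo $\mathrm{Ker}\,\tilde\rho_V = {}_\ell\Br(K)_{\mathrm{ur}}$. The hypothesis on $M$ then reduces the count modulo $\iota_k({}_\ell\Br(k))$ to at most $M \cdot \varphi(n)^r$ cosets. The problem thus reduces to bounding (or establishing the finiteness of) the intersection
$$
\Lambda := \gen(D) \cap ([D'] \cdot \iota_k({}_\ell\Br(k)))
$$
for an arbitrary representative $[D'] \in \gen(D)$; using $\gen(D') = \gen(D)$, I may rename $D'$ back to $D$.

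To bound $\Lambda$, I would mimic the final part of the proof of Theorem \ref{T:4}. Since $C(k)$ is infinite and $\mathrm{Ram}_V(D)$ is finite, I can pick a geometric place $v \in V$ with $\overline{K}_v = k$ at which $D$ is unramified, and consider the specialization $\nu_v \colon {}_\ell\Br(K)_{\{v\}} \to {}_\ell\Br(k)$ obtained as the composition of the map into the completion with the canonical isomorphism ${}_\ell\Br(K_v)_{\{v\}} \xrightarrow{\sim} {}_\ell\Br(k)$. For every $[D''] \in \gen(D)$, the analogue of Lemma \ref{L:=} shows that $D''$ is also unramified at $v$; the Hensel-lifting argument from the proof of Theorem \ref{T:4} (using a separable generator of a maximal subfield of the residue algebra, valid because the degree divides $\ell$ and hence is prime to $\mathrm{char}\,k$) then shows that the residue algebras $\Delta = \overline{\mathcal{D}}$ and $\Delta'' = \overline{\mathcal{D}''}$, which are central division $k$-algebras of common degree dividing $\ell$, have the same maximal subfields, so that
$$
\nu_v(\gen(D)) \subset \gen(\Delta).
$$
Since $\nu_v \circ \iota_k = \id$, the map $\nu_v$ is injective on the coset $[D]^{-1} \Lambda \subset \iota_k({}_\ell\Br(k))$, whence $|\Lambda| \leqslant |\gen(\Delta)|$ and $\Lambda$ is finite as soon as $\gen(\Delta)$ is. Combining with the preceding coset estimate yields both conclusions.

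The only point requiring verification — which I would flag as the main (though very minor) technical obstacle — is that the input lemmas, originally stated for ${}_n\Br$, continue to hold in ${}_\ell\Br$. This is routine: Lemma \ref{L:R2} (and hence Lemma \ref{L:=}) depends only on the degrees of the division algebras being compared being prime to $\mathrm{char}\,k$, a condition satisfied by any divisor of $\ell$ since $\ell$ and $n$ share the same prime factors and $n$ is prime to $\mathrm{char}\,k$ by hypothesis. Consequently there is no genuine new difficulty, and the argument really is the one of Theorem \ref{T:4} carried out in ${}_\ell\Br(K)$ in place of ${}_n\Br(K)$.
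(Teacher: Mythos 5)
Your proof is correct and follows the same route the paper itself indicates: the paper's own argument for Theorem \ref{T:4'} consists of observing the extended estimate $\vert\tilde\rho_V(\gen(D))\vert \leqslant \varphi(n)^r$ and then "repeating almost verbatim the rest of the proof of Theorem \ref{T:4}," which is exactly what you do, including the useful sanity check that Lemmas \ref{L:R2} and \ref{L:=} apply because any divisor of $\ell$ is still prime to $\mathrm{char}\,k$.
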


\vskip2mm

One notable case where these results apply is $K = k(x)$, i.e. $C =
\mathbb{P}^1_k$. Strictly speaking, the case of a finite field $k$ is not
covered by Theorems \ref{T:4} and \ref{T:4'}, so let us consider it
separately. In this case, the unramified Brauer group $\Br(K)_V$
(where, as above, $V$ is the set of all geometric places of $K$), is
trivial (cf. \cite[Corollary 6.4.6]{GiSz} and 3.6), so we obtain
from Theorem \ref{T:Ram1} that for any central division $K$-algebra
$D$ of exponent $n$ we have
$$
\vert \gen(D) \vert \leqslant \varphi(n)^r, \ \ \text{where} \ \ r =
\vert \mathrm{Ram}_V(D) \vert.
$$
Now, let us assume that $k$ is infinite. It is well-known that for
any $n$ prime to $\mathrm{char} \: k$, we have
${}_n\Br(K)_{\mathrm{ur}} = \iota_k({}_n\Br(k))$ (cf.
\cite[Corollary 6.4.6]{GiSz}), i.e. one can take $M = 1$ in Theorems
\ref{T:4} and \ref{T:4'}. We then obtain the
following.
\begin{thm}\label{T:Rat}
Let $K = k(x)$, and let $n > 1$ be an integer prime to
$\mathrm{char} \: k$.

\vskip2mm

\noindent {\rm (1)} \parbox[t]{16cm}{If there exists $N < \infty$
such that $\vert \gen(\Delta) \cap {}_n\Br(k) \vert \leqslant N$ for
any central division $k$-algebra $\Delta$ of exponent $n$, then for
any central division $K$-algebra $D$ of exponent $n$, we have $$\vert
\gen(D) \cap {}_n\Br(K) \vert \leqslant N \cdot \varphi(n)^r$$ where
$r = \vert \mathrm{Ram}_V(D) \vert$;}

\vskip2mm

\noindent {\rm (2)} \parbox[t]{16cm}{If there exists $N < \infty$
such that $\vert \gen(\Delta) \vert \leqslant N$ for any central
division $k$-algebra $\Delta$ of degree dividing $n$, then for any
central division $K$-algebra $D$ of degree dividing $n$, we have
$$\vert \gen(D) \vert \leqslant N \cdot \varphi(m)^r,$$ where $m$ is the exponent
of $D$ and $r = \vert \mathrm{Ram}_V(D) \vert$;}

\vskip2mm

\noindent {\rm (3)} \parbox[t]{16cm}{If $\gen(\Delta) \cap
{}_n\Br(k)$ (resp., $\gen(\Delta)$) is \emph{finite} for any central
division $k$-algebra $\Delta$ of exponent $n$ (resp., of degree
dividing $n$), then $\gen(D) \cap {}_n\Br(K)$ (resp., $\gen(D)$) is
\emph{finite} for any central division $K$-algebra $D$ of exponent
$n$ (resp., of degree dividing $n$).}
\end{thm}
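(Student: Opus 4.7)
The plan is to specialize the function-field results established earlier, namely Theorems \ref{T:4} and \ref{T:4'}, to the curve $C = \mathbb{P}^1_k$, whose function field is precisely $K = k(x)$. There are just two input hypotheses to verify for this $C$. First, $C(k) = \mathbb{P}^1(k) = k \cup \{\infty\}$ is infinite, which holds because $k$ is tacitly assumed infinite — the case of finite $k$ having already been disposed of in the paragraph immediately preceding the statement, by a direct invocation of Theorem \ref{T:Ram1} together with the triviality of $\Br(k)$ for a finite field. Second, the constant $M = \vert {}_n\Br(K)_{\mathrm{ur}} / \iota_k({}_n\Br(k)) \vert$ equals $1$; this is \cite[Corollary 6.4.6]{GiSz}, which in turn is a standard consequence of the Faddeev exact sequence for the Brauer group of $\mathbb{P}^1_k$, splitting thanks to the $k$-rational point at infinity.

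Once these two facts are in hand, the theorem becomes a mere substitution. Assertion (1) reads off directly from Theorem \ref{T:4}(1) with $M = 1$. The finiteness statement (3) combines Theorem \ref{T:4}(2) (for the exponent-$n$ case) with Theorem \ref{T:4'}(2) (for the degree-divides-$n$ case), again with $M = 1$. For assertion (2), I apply Theorem \ref{T:4'}(1), taking care with the change of notation: the integer called $\ell$ in that theorem plays the role of the $n$ in the present statement, while the integer called $n$ there is the exponent $m$ of $D$ here. Since $D$ has degree dividing $n$, its exponent $m$ divides $n$ (and shares the same prime factors by \cite[Proposition 4.5.13]{GiSz}); in particular $m$ is prime to $\mathrm{char}\: k$, so Theorem \ref{T:4'}(1) applies and delivers the bound $N \cdot \varphi(m)^r$.

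I anticipate no genuine obstacle: the substantive analytic and cohomological content is already packaged in Theorems \ref{T:4} and \ref{T:4'}, and the present theorem is simply their specialization to the simplest possible curve. The only point requiring mild vigilance is the bookkeeping in part (2) — matching the letter $n$ in the present statement against $\ell$ in Theorem \ref{T:4'}, and writing the $\varphi$-factor in terms of the exponent $m$ of $D$ rather than $n$.
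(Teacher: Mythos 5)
Your proposal is correct and follows exactly the paper's own derivation: specialize Theorems \ref{T:4} and \ref{T:4'} to $C=\mathbb{P}^1_k$ using $M=1$ from \cite[Corollary 6.4.6]{GiSz}, treat the finite-$k$ case separately via the vanishing of the unramified Brauer group and Theorem \ref{T:Ram1}, and in part (2) translate the notation of Theorem \ref{T:4'} (its $\ell$ is the present $n$, its $n$ is the present exponent $m$). No gaps.
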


\vskip2mm

\noindent {\bf Remark 3.4.} In \cite{KMcK}, Krashen and MacKinnie
defined the genus $\gen'(D)$ of a central division
$K$-algebra $D$ as the collection of $[D'] \in \Br(K)$ having the
same finite-dimensional splitting fields as $D$ (clearly $\gen'(D)
\subset \gen(D)$). Their Theorem 2.2 provides estimates for $\vert
\gen'(D) \vert$ of a central division algebra $D$ over $K = k(x)$
of a prime exponent $p \neq \mathrm{char} \: k$ similar to those given in
Theorem \ref{T:Rat}

\addtocounter{thm}{1}

\vskip2mm

Of special interest is the question of when $\gen(D)$ reduces to a
single element. As we already noted in \S \ref{S:Intro}, this is
possible only if $D$ has exponent two and  is indeed the case if $D$
is a division algebra of exponent two over a global field (see
below). Although over general fields this property may fail even for
quaternion algebras \cite[\S 2]{GS}, the following theorem allows
one to expand the class of fields over which it does hold.
\begin{thm}\label{T:1}\label{T:Stab1}
{\rm (Stability Theorem)} Let $k$ be a field of characteristic $\neq
2$.

\vskip2mm

\noindent {\rm (1)} If $k$ satisfies the following property:

\vskip2mm

\noindent $(*)$ \parbox[t]{16cm}{If $D$ and $D'$ are central
division $k$-algebras of exponent 2 having the same maximal
subfields then $D \simeq D'$ $($in other words, for any $D$ of
exponent 2, $\vert \gen(D) \cap {}_2\Br(k) \vert = 1)$.}

\vskip2mm

\noindent Then the field of rational functions $k(x)$ also satisfies
$(*)$.

\vskip2mm

\noindent {\rm (2)} \parbox[t]{16cm}{If $\vert \gen(D) \vert = 1$
for any central division $k$-algebra $D$ of exponent 2, then the
same is true for any central division $k(x)$-algebra of exponent 2.}
\end{thm}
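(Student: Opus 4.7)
The plan is to deduce (1) as an immediate application of Theorem \ref{T:Rat}(1), and then to bootstrap (2) out of (1) by a residue argument that forces every member of $\gen(D)$ to lie in ${}_2\Br(K)$.

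For (1), property $(*)$ for $k$ is precisely the statement that $N = 1$ is admissible in the hypothesis of Theorem \ref{T:Rat}(1) with $n = 2$, which yields $\vert \gen(D) \cap {}_2\Br(K) \vert \leq N \cdot \varphi(2)^r = 1$ for every central division $K = k(x)$-algebra $D$ of exponent $2$; since $[D]$ lies in $\gen(D) \cap {}_2\Br(K)$, the bound is sharp and $(*)$ transfers to $K$. For finite $k$ of characteristic $\neq 2$ the hypotheses of Theorem \ref{T:Rat} fail (because $\mathbb{P}^1(k)$ is finite), but the stronger conclusion $\vert \gen(D) \vert \leq \varphi(2)^r = 1$ still holds by Theorem \ref{T:Ram1} combined with the vanishing of ${}_n\Br(K)_{\mathrm{ur}}$ recalled in the discussion preceding Theorem \ref{T:Rat}.

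For (2), the hypothesis is strictly stronger than that of (1), so (1) already gives $\vert \gen(D) \cap {}_2\Br(K) \vert = 1$ for every exponent-$2$ central division $K$-algebra $D$; the remaining task is to show that $\gen(D) \subset {}_2\Br(K)$. Fix $[D'] \in \gen(D)$. By Lemma \ref{L:=}, for every $v \in V$ the residue characters $\chi_v = \rho_v([D])$ and $\chi'_v = \rho_v([D'])$ have equal kernels, and since $\chi_v$ has order dividing $2$, so does $\chi'_v$; consequently $2[D']$ is unramified at every $v \in V$. The identity ${}_m\Br(K)_{\mathrm{ur}} = \iota_k({}_m\Br(k))$ available for $K = k(x)$ then allows us to write $2[D'] = \iota_k([\alpha])$ for some $[\alpha] \in \Br(k)$, reducing the task to proving $[\alpha] = 0$.

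Assuming $k$ is infinite, I pick a place $v \in V$ with $\overline{K}_v = k$ at which both $D$ and $D'$ are unramified, which is possible since $\mathbb{P}^1(k)$ is infinite and Proposition \ref{P:R1} leaves only finitely many ramified places. The residue argument from the proof of Theorem \ref{T:4} shows that the residue division algebras $\Delta, \Delta'$ over $k$ share the same maximal subfields, so $[\Delta'] \in \gen(\Delta)$; moreover $[\Delta] = \nu_v([D])$ has order dividing $2$. If $\Delta$ has exponent exactly $2$, then the hypothesis of (2) forces $\Delta' = \Delta$; if $\Delta = k$, then $\mathcal{D} = K_v$, and \cite[Cor.~2.4]{RR} applied to $(\mathcal{D}, \mathcal{D}')$ forces $\mathcal{D}' = K_v$ and hence $\Delta' = k$. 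Either way $2[\Delta'] = 0$, and combining $\nu_v(2[D']) = 2[\Delta']$ with $\nu_v \circ \iota_k = \mathrm{id}$ yields $[\alpha] = 0$; hence $2[D'] = 0$, so $[D'] \in {}_2\Br(K)$ and part (1) delivers $D' \cong D$. For finite $k$ the residue step is unnecessary since $\Br(k) = 0$ forces $[\alpha] = 0$ directly. The main obstacle is precisely this residue step: the hypothesis of (2) speaks only of exponent-$2$ algebras over $k$, whereas the residue $\Delta$ of $D$ at a $k$-rational place may well be trivial, and the argument must bridge this gap by using the common-maximal-subfields property of $(\mathcal{D}, \mathcal{D}')$ rather than the hypothesis to propagate triviality from $\Delta$ to $\Delta'$.
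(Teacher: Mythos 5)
Your proof of part (1) matches the paper's, which derives it from Theorem \ref{T:Rat}(1) with $n=2$, $N=1$; the remark about finite $k$ is consistent with the discussion in the paper preceding Theorem \ref{T:Rat}. For part (2), however, you depart from the paper in a useful way. The paper's entire proof is the sentence ``This follows from Theorem \ref{T:Rat}, (1) and (2), with $n=2$ and $N=1$,'' but Theorem \ref{T:Rat}(2) with $n=2$ only governs division algebras of \emph{degree} dividing $2$, i.e.\ quaternion algebras, whereas a central division $k(x)$-algebra of exponent $2$ can have degree $2^a$ with $a>1$, and such algebras are exactly what Corollary \ref{C:Stab1} is about. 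Your argument fills this gap explicitly: you first use Lemma \ref{L:=} to transfer the order-$\leqslant 2$ constraint on residue characters from $D$ to any $[D']\in\gen(D)$, deduce that $2[D']$ lies in $\iota_k(\Br(k))$, and then kill this constant class by evaluating $\nu_v$ at a $k$-rational place where both $D$ and $D'$ are unramified and invoking either the hypothesis (when $\Delta$ has exponent $2$) or triviality (when $\Delta=k$); this shows $\gen(D)\subset{}_2\Br(K)$, after which part (1) finishes. The paper's intended route is to rerun the coset-plus-$\nu_v$-injectivity argument from the proof of Theorem \ref{T:4'} directly, which bounds $|\gen(D)|\leqslant|\gen(\Delta)|$ where $\Delta=\nu_v([D])$ necessarily has exponent dividing $2$; note that what makes this work is precisely this exponent constraint on $\Delta$, and \emph{not} the literal (degree-based) hypothesis of Theorem \ref{T:Rat}(2). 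Both arguments are the same residue technique repackaged; yours has the virtue of making the period-versus-index subtlety explicit, which the paper's one-line citation glosses over.
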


This follows from Theorem \ref{T:Rat}, (1) and (2), with $n = 2$ and
$N = 1$.
%
%
%
%
%
%
%
%
%
%

\vskip2mm

\addtocounter{thm}{1}

\noindent {\bf 3.6. On the Albert-Brauer-Hasse-Noether Theorem.} In
this subsection, we will review several consequences of (ABHN) that
will be needed in Corollary \ref{C:Stab1} below as well as in the
next section. Let $k$ be a global field, and let $V^k$ be the set of
all places of $k$ (including the archimedean ones if $k$ is a number
field). In this case, the set $V^k \setminus V_{\infty}^k$, where
$V_{\infty}^k$ is the set of archimedean valuations, satisfies
conditions (A) and (B), and the residue map $\rho_v$ can be defined
on all of $\Br(k_v)$ for any  $v \in V^k \setminus V_{\infty}^k$
because the residue field $\overline{k}_v$ is finite, hence prefect.
Now, since the absolute Galois group $\mathcal{G}^{(v)}$ of
$\overline{k}_v$ is isomorphic to $\widehat{\Z}$, one can view
$\rho_v$ as a map
$$
\rho_v \colon \Br(k_v) \longrightarrow
\mathrm{Hom}(\mathcal{G}^{(v)} , \Q/\Z) \simeq \Q/\Z.
$$
This is usually referred to as the {\it invariant} map and is well
known to be an isomorphism (cf. \cite[Ch. XII, \S3]{Se}). For $v$ archimedean, we
have $\Br(k_v) = \Z/2\Z$ if $k_v = \R$ and $\Br(k_v) = 0$ if $k_v =
\C$, and one defines the invariant map $\rho_v$ to be the
isomorphism $\Z/2\Z \simeq (1/2)\Z/\Z$ in the first case and to be
the trivial map in the second. Then (ABHN) asserts that the sequence
\begin{equation}\tag{ABHN}\label{E:ABHN}
0 \to \Br(k) \longrightarrow \bigoplus_{v \in V^k} \Br(k_v)
\stackrel{\Sigma}{\longrightarrow} \Q/\Z \to 0,
\end{equation}
where $\Sigma$ is the sum of the invariant maps, is exact (cf.
\cite[Ch. VII, 9.6]{ANT}, \cite[18.4]{Pierce}, and also
\cite[6.5]{GiSz} for the function field case).



It follows from (ABHN) that for a global function field $k$,
the unramified Brauer group $\Br(k)_V$ with respect to the set $V =
V^k$ of all (geometric) places of $k$ is trivial. Applying Theorem
\ref{T:Ram1}, we obtain that $\gen(D)$ is finite for any central
division $k$-algebra $D$ and reduces to one element if $D$ is of
exponent two\footnote{We note that over a global field $k$, any
division algebra $D$ of exponent two is necessarily a quaternion
algebra, hence $\gen(D) \subset {}_2\Br(k)$.} (at least if
$\mathrm{char} k \neq 2$, although the result remains true in
characteristic two as well in view of Remark 2.6(2) - we give a
direct argument below).

\vskip1mm

Let now $k$ be a number field. Fix a finite subset $S \subset V^k$
containing the set $V_{\infty}^k$ of archimedean valuations, and set
$V = V^k \setminus S$. It follows from (ABHN) that for any $n > 1$
we have
\begin{equation}\label{E:ubr}
{}_n\Br(k)_V = \mathrm{Ker}\left(\bigoplus_{v \in S} {}_n\Br(k_v)
\stackrel{\Sigma_S}{\longrightarrow} \frac{1}{n}\Z/\Z \right),
\end{equation}
where $\Sigma_S$ is the sum of the invariant maps for $v \in S$.
Clearly, ${}_n\Br(k)_V$ is finite for any $n$ and $S$ as above (and
even is a group of exponent $\mathrm{g.c.d.}(n , 2)$ for $S =
V_{\infty}^k$), so Theorem \ref{T:Ram1} implies that $\gen(D)$ is
finite for any central division $k$-algebra $D$.
Unfortunately, ${}_2\Br(k)_V$ is nontrivial even for $S =
V_{\infty}^k$ if $k$ has at least two real places.
So, the argument used in the function field case to show that $\vert
\gen(D) \vert = 1$ for any central division $k$-algebra $D$ of
exponent two does not apply directly and needs to be modified to
also take into account the archimedean places. For this, we observe
that (ABHN) yields an embedding
$$
0 \to {}_2\Br(k) \longrightarrow \bigoplus_{v \in V^k} {}_2\Br(k_v),
$$
and that ${}_2\Br(k_v) = \Z/2\Z$ unless $k_v = \C$, in which case
$\Br(k_v) = 0$. It follows that any $[D] \in {}_2\Br(k)$,
represented by a quaternion algebra $D$, is completely determined by the set
$\mathscr{R}(D)$ of those $v \in V^k$ for which the algebra $D
\otimes_k k_v$ is nontrivial (these are sometimes referred to as the ``generalized ramification places").
Furthermore, for $d \in k^{\times} \setminus {k^{\times}}^2$, we
have the following well-known criterion:
\begin{equation}\label{E:emb2}
\ell = k(\sqrt{d}) \ \ \text{embeds \ into} \ \ D \ \ \
\Leftrightarrow \ \ \ d \notin {k_v^{\times}}^2 \ \ \text{for all} \
\ v \in \mathscr{R}(D).
\end{equation}
(cf. \cite[\S18.4, Corollary b]{Pierce}). Thus, to prove that $\vert \gen(D) \vert = 1$
for any quaternion division algebra $D$ over $k$, it is enough to
show that if two such algebras $D_1$ and $D_2$ have the same
quadratic subfields, then $\mathscr{R}(D_1) = \mathscr{R}(D_2)$.
This follows easily from (\ref{E:emb2}) and the weak approximation
theorem. Indeed, if, for example, there is a $v_0 \in \mathscr{R}(D_1)
\setminus \mathscr{R}(D_2)$, then using the openness of
${k_v^{\times}}^2 \subset k_v^{\times}$ and weak approximation, one
can find $d \in k^{\times} \setminus {k^{\times}}^2$ such that
$$
d \in {k_{v_0}^{\times}}^2 \ \ \text{but} \ \ d \notin
{k_v^{\times}}^2 \ \ \text{for all} \ \ v \in \mathscr{R}(D_2).
$$
Then according to (\ref{E:emb2}), the quadratic extension $\ell =
k(\sqrt{d})$ embeds into $D_2$ but not into $D_1$, a contradiction.

To make this more concrete, let us consider

\vskip2mm

\noindent {\bf Example 3.7.} Take the following two quaternion division algebras over $\Q$:
$$D_1 = \left( \frac{-1 , 3}{\Q}
\right) \ \ \text{and} \ \ D_2 = \left( \frac{-1 , 7}{\Q} \right).$$
Then $\mathscr{R}(D_1) = \{2 , 3 \}$ and $\mathscr{R}(D_2) = \{2 ,
7\}$. Clearly, $10 \in {\Q_3^{\times}}^2$ while $10 \notin
{\Q_2^{\times}}^2 , {\Q_7^{\times}}^2$.  So, by (\ref{E:emb2}), the field $\ell = \Q(\sqrt{10})$
embeds into $D_2$ but not into $D_1$. Thus, $D_1$ and $D_2$ are
distinguished by their quadratic subfields.

\vskip2mm

Notice that since the
subgroup of squares is open in $k_v$ for any valuation of any global
field of characteristic $\neq 2$, the argument given above works
for any such field. To extend it to characteristic two, one needs instead to use the fact that
the subgroup $\wp(k_v) \subset k_v$ is open, where $\wp(x) = x^2 - x$, and replace
(\ref{E:emb2}) with the observation that for $a \in k \setminus \wp(k)$,
$$
\ell = k(\wp^{-1}(a)) \ \ \text{embeds \  into} \ \ D \ \ \
\Leftrightarrow \ \ \ a \notin \wp(k_v) \ \ \text{for all} \ \ v \in
\mathscr{R}(D).
$$

\vskip2mm

Now, since $\vert \gen(D) \vert = 1$ for any central
division algebra $D$ of exponent two over a global field $k$, Theorem \ref{T:Stab1} yields the following.


\addtocounter{thm}{1}

\begin{cor}\label{C:Stab1}
Let $k$ be a field of characteristic $\neq 2$ which is either a
global field or a finite field, and let $K = k(x_1, \ldots , x_r)$
be a finitely generated purely transcendental extension of $k$. Then
$\vert \gen(D) \vert = 1$ for
any central division $K$-algebra $D$ of exponent 2.
\end{cor}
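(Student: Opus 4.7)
The plan is to reduce the statement to the Stability Theorem (Theorem \ref{T:Stab1}(2)) and proceed by induction on the transcendence degree $r$. Since the characteristic of $k(x_1, \ldots, x_j)$ equals the characteristic of $k$, every intermediate field in the tower
\[
k \subset k(x_1) \subset k(x_1, x_2) \subset \cdots \subset k(x_1, \ldots, x_r)
\]
has characteristic $\neq 2$, so Theorem \ref{T:Stab1}(2) applies at each stage: if $|\gen(\Delta)| = 1$ for every central division algebra $\Delta$ of exponent 2 over $k(x_1, \ldots, x_{j-1})$, then the same conclusion holds over $k(x_1, \ldots, x_{j-1})(x_j) = k(x_1, \ldots, x_j)$. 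Thus the whole corollary is reduced to the base case $r = 0$.

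For the base case, I would handle the two possibilities separately. If $k$ is a finite field, then by Wedderburn's little theorem $\Br(k) = 0$, so there are no nontrivial central division $k$-algebras at all, and the condition $|\gen(D)| = 1$ is vacuous. If $k$ is a global field of characteristic $\neq 2$, then every central division algebra of exponent 2 over $k$ is a quaternion algebra (so that $\gen(D) \subset {}_2\Br(k)$ automatically), and the conclusion $|\gen(D)| = 1$ was already established in \S3.6: one uses the (ABHN) exact sequence to encode $[D]$ by its set $\mathscr{R}(D)$ of ramified places, combined with the criterion \eqref{E:emb2} for a quadratic subfield to embed and weak approximation, to show that $\mathscr{R}(D_1) = \mathscr{R}(D_2)$ whenever $D_1$ and $D_2$ share the same quadratic subfields.

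Since the heavy lifting has been carried out upstream (the Stability Theorem for the inductive step, and the ABHN-plus-weak-approximation argument for the global-field base case), there is essentially no real obstacle left. The only technical point worth being explicit about is that Theorem \ref{T:Stab1}(2) requires the full statement $|\gen(D)| = 1$ rather than merely $|\gen(D) \cap {}_2\Br(K)| = 1$; this is harmless in the base case because over a global field an exponent-2 division algebra is automatically a quaternion algebra and so $\gen(D) \subset {}_2\Br(k)$, and it propagates through the induction because Theorem \ref{T:Stab1}(2) preserves exactly the stronger statement we need.
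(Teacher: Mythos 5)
Your proposal is correct and matches the paper's intended argument: the global-field base case is exactly the computation in \S 3.6 (and the finite-field case is vacuous since $\Br(k)=0$), and iterating Theorem \ref{T:Stab1}(2) once for each indeterminate $x_j$ propagates the conclusion to $K = k(x_1,\ldots,x_r)$. Your observation that part (2) of the Stability Theorem transports precisely the stronger hypothesis $|\gen(D)|=1$ (rather than $|\gen(D)\cap {}_2\Br|=1$) is a correct reading of why the induction closes.
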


\vskip2mm

\noindent {\bf Remark 3.9.} As we already mentioned, Theorem 3 of
\cite{CRR} asserts that if $K$ is a finitely generated field, then
for any central division $K$-algebra $D$ of degree $n$ prime to
$\mathrm{char} \: K$, the genus $\gen(D)$ is finite. At the same
time, generalizing the construction described in \cite[\S 2]{GS},
one can give examples of quaternion division algebras over
infinitely generated fields with infinite genus (cf. \cite{Meyer}).
So, we would like to point out that that Theorems \ref{T:4} and
\ref{T:4'} can be used to construct examples of division algebras
over the function fields of curves with infinitely generated fields
of constants having finite genus. Some results of this nature are
contained in \cite[4.6 and 4.8]{RR}. We will not go into details
about this here, but would only like to point out that, as follows
from the standard exact sequence for the Brauer group of an
absolutely irreducible smooth projective curve $C$ over a perfect
field $k$ (cf. \cite{L} or \cite[(9.25) on p. 27]{GMS}), the
requirement $\vert {}_n\Br(K)_{\mathrm{ur}}/\iota_k({}_n\Br(k))
\vert < \infty$ is satisfied, for example, if $k$ is of type (F) as
defined by Serre (\cite{Serre}, Chap. III, Sect. 4.2). Furthermore,
the requirement that $C(k)$ is infinite can often be replaced by the
much weaker requirement that $\bigcup_{\ell} C(\ell)$, where $\ell$
runs through a family of finite extensions of $k$ of degree prime to
$n$, is infinite.

\vskip4mm

\section{An example: Function field of a split elliptic curve}\label{S:Ell}

According to Theorem \ref{T:Ram1}, to prove the finiteness of
$\gen(D)$ for any central division algebra $D$ of degree $n$ over a
field $K$ (provided that $n$ is prime to $\mathrm{char} \: K$), it
is enough to find a set $V$ of discrete valuations of $K$ that
satisfies conditions (A)-(C) and for which the unramified Brauer
group ${}_n\Br(K)_V$ is finite. As we already mentioned in \S
\ref{S:Intro}, in \cite{CRR} we sketched a proof of the finiteness
of ${}_n\Br(K)_V$ for a suitable set $V$ of discrete valuations of a
given finitely generated field $K$ that relies on results of Deligne
and Gabber in \'etale cohomology. While this proof has the important
advantage of giving a lot of flexibility in the choice of $V$, it
does not furnish an estimation of the size of ${}_n\Br(K)_V$ for
{\it any} $V$. In this section, we will work out an explicit
estimation of $\vert {}_2\Br(K)_V \vert$ for a suitable set $V$ of
discrete valuations of the field $K$ of rational functions on a
split elliptic curve defined over a number field (Theorem
\ref{T:Elliptic1}); this yields an estimation of $\vert \gen(D)
\vert$ for a quaternion algebra $D$ over such $K$ (Corollary
\ref{C:Elliptic1}). The method developed in this section can in fact
be generalized to arbitrary curves, leading to a more direct proof
of the finiteness of ${}_n\Br(K)_V$ for a suitable explicitly
defined set $V$ of discrete valuation of an arbitrary finitely
generated field $K$; details will be published elsewhere. To
simplify notations, in this section, given a field $F$, the
quaternion algebra $\displaystyle \left( \frac{\alpha \: , \:
\beta}{F} \right)$ corresponding to a pair $\alpha , \beta$ will be
denoted by $(\alpha , \beta)_F$.



\vskip2mm


Let $k$ be a number field, and let $E$ be an elliptic curve over $k$
given by a Weierstrass equation
\begin{equation}\label{E:Elliptic0}
y^2 = f(x), \ \ \text{where} \ \ f(x) = x^3 + \alpha x^2 + \beta x +
\gamma.
\end{equation}
Denote by $\delta \neq 0$ the discriminant of $f$. We will assume in
this section that $E$ {\it splits} over $k$, i.e. $f$ has three
(distinct) roots in $k$. Let
$$
K := k(E) = k(x , y)
$$
be the function field of $E$. For $s \in k^{\times}$, we let $V^k(s)$ denote the finite set
$\{ v \in V^k \setminus V_{\infty}^k \: \vert \: v(s) \neq 0 \}.$ Let us fix
a finite set of valuations $S
\subset V^k$ containing $V_{\infty}^k \cup V^k(2) \cup V^k(\delta)$,
as
well as all those nonarchimedean $v \in V^k$ for which at least one of
$\alpha, \beta, \gamma$ has a negative value. For a nonarchimedean $v
\in V^k$, let $\tilde{v}$ denote its extension to $F := k(y)$ given
by
\begin{equation}\label{E:Elliptic1}
\tilde{v}(p(y)) = \min_{a_i \neq 0} v(a_i) \ \ \text{for} \ \ p(y) =
a_n y^n + \cdots + a_0 \in k[y], \ \ p \neq 0
\end{equation}
(cf. \cite[Ch. VI, \S 10]{Bour}). Clearly, $K$ is a cubic extension
of $F$, and, as we will show in Lemma \ref{L:Elliptic3-2} below, for
$v \in V^k \setminus S$, the valuation $\tilde{v}$ has a {\it
unique} extension to $K$, which we will denote by $w = w(v)$.
We now
introduce the following set of discrete valuations of $K$:
$$
V = V_0 \cup V_1,
$$
where $V_0$ is the set of all {\it geometric} places of $K$ (i.e.,
those discrete valuations that are trivial on $k$), and $V_1$
consists of the valuations $w(v)$ for all $v \in V^k \setminus S$.
It is easy to see that $V$ satisfies conditions (A), (B) and (C) of
\S \ref{S:Ram}. The main result of this section is the following.
\begin{thm}\label{T:Elliptic1}
For \emph{any} finite set $S$ as above, the unramified Brauer group ${}_2\Br(K)_V$ is finite of order
dividing
$$
2^{\vert S \vert - t} \cdot \vert {}_2 \mathrm{Cl}_S(k) \vert^2
\cdot \vert U_S(k)/U_S(k)^2 \vert^2,
$$
where $t = c + 1$ and $c$ is the number of complex places of
$k$, and $\mathrm{Cl}_S(k)$ and $U_S(k)$ are the class group and the
group of units of the ring of $S$-integers $\mathcal{O}_k(S)$,
respectively.
\end{thm}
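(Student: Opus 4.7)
The plan is to decompose ${}_2\Br(K)_V$ according to the standard Brauer-group exact sequence for the elliptic curve $E/k$ and then bound each piece separately by imposing the additional unramified conditions at $V_1$. Being unramified at every geometric place in $V_0$ identifies ${}_2\Br(K)_{V_0}$ with ${}_2\Br(E)$, the $2$-torsion of the Brauer group of the smooth projective curve $E$ over $k$ (the standard Faddeev-type residue sequence, cf.\ \cite{L}). Since $E$ is split, it has a $k$-rational point, and the resulting identity section of $E \to \mathrm{Spec}\,k$ provides a retraction for $\pi^{*}\colon \Br(k) \to \Br(E)$, splitting the exact sequence
$$
0 \to \Br(k) \to \Br(E) \to H^1(k, E) \to 0.
$$
Thus ${}_2\Br(E) \simeq {}_2\Br(k) \oplus H^1(k, E)[2]$, and ${}_2\Br(K)_V$ corresponds, under this decomposition, to the subgroup of pairs whose two components are jointly unramified at every $w \in V_1$.

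To bound the ${}_2\Br(k)$-component, I would observe that for $[D] \in {}_2\Br(k)$, viewed inside ${}_2\Br(E)$ by pullback, the residue at $w(v) \in V_1$ factors through the residue at $v$ on ${}_2\Br(k)$ via the restriction $\mathrm{Hom}(\mathcal{G}^{(v)}, \Z/2\Z) \to \mathrm{Hom}(\mathcal{G}^{(w(v))}, \Z/2\Z)$ (Theorem \ref{T:Ram2} applied to $w(v)/v$, noting $e(w(v)\vert v) = 1$ for the Gauss-type extension under the hypotheses on $S$). Since this restriction is injective (because $\overline{k}_v$ is algebraically closed in the residue function field of $w(v)$), unramified at $w(v)$ is equivalent to unramified at $v$. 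Hence the ${}_2\Br(k)$-part of ${}_2\Br(K)_V$ coincides with ${}_2\Br(k)_{V^k \setminus S}$. By (ABHN) of \S 3.6, this group is the kernel of the sum of invariant maps $\bigoplus_{v \in S} {}_2\Br(k_v) \to \frac{1}{2}\Z/\Z$; using $|{}_2\Br(k_v)| = 2$ for $v$ nonarchimedean or real and $|{}_2\Br(k_v)| = 1$ for $v$ complex, together with surjectivity of the sum, we obtain $|{}_2\Br(k)_{V^k \setminus S}| = 2^{|S| - t}$ with $t = c + 1$.

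For the $H^1(k, E)[2]$-component, I would use the $2$-descent sequence $0 \to E[2] \to E \stackrel{2}{\longrightarrow} E \to 0$, which yields a surjection $H^1(k, E[2]) \twoheadrightarrow H^1(k, E)[2]$. Since $E$ is split, $E[2] \simeq (\Z/2\Z)^{2}$ as a Galois module, so by Kummer theory $H^1(k, E[2]) \simeq (k^{\times}/k^{\times 2})^{2}$. A pair $(a_1, a_2)$ is carried into ${}_2\Br(E)$, modulo the ${}_2\Br(k)$-summand, by a biquaternion-type representative of the form $(a_1, x - e_1)_K \otimes_K (a_2, x - e_2)_K$, where $e_1, e_2 \in k$ are two of the $2$-torsion $x$-coordinates. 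A tame-symbol computation at $w(v) \in V_1$, using that for $v \in V^k \setminus S$ the $e_i$ are $v$-integral with $v$-unit pairwise differences, shows that this class is unramified at $w(v)$ if and only if $v(a_1) \equiv v(a_2) \equiv 0 \pmod 2$. Thus the $V_1$-unramified condition forces $(a_1, a_2)$ into $k(S, 2)^{2}$, where $k(S, 2) := \{a \in k^{\times}/k^{\times 2} : v(a) \text{ is even for all } v \in V^k \setminus S\}$. From the standard Kummer-theoretic exact sequence
$$
0 \to U_S(k)/U_S(k)^{2} \to k(S, 2) \to {}_2\mathrm{Cl}_S(k) \to 0
$$
one gets $|k(S, 2)| = |U_S(k)/U_S(k)^{2}| \cdot |{}_2\mathrm{Cl}_S(k)|$, so this component contributes at most $|U_S(k)/U_S(k)^{2}|^{2} \cdot |{}_2\mathrm{Cl}_S(k)|^{2}$. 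Multiplying the two bounds gives the theorem.

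The main obstacle is the explicit residue computation for the biquaternion representative at the Gauss-type valuations $w(v) \in V_1$: one must verify that the chosen representative indeed lifts the given element of $H^1(k, E)[2]$ modulo ${}_2\Br(k)$ and that its ramification at $w(v)$ is governed precisely by the parities of $v(a_1)$ and $v(a_2)$. This is exactly where the hypothesis that $S$ contains $V_{\infty}^k \cup V^k(2) \cup V^k(\delta)$ together with all $v$ for which some coefficient of $f$ has negative value is used --- it guarantees that the Weierstrass model and the $2$-torsion coordinates $e_i$ are $v$-integral with $v$-unit differences, so that the tame symbol at $w(v)$ simplifies to the expected form. Once this local analysis is settled, the various counting ingredients combine routinely.
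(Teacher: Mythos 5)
Your overall strategy mirrors the paper's: decompose ${}_2\Br(K)_{V_0}$ into a ``constant'' part from ${}_2\Br(k)$ and a ``geometric'' biquaternion part, then bound each by imposing the unramifiedness at $V_1$, obtaining the factors $2^{|S|-t}$ and $|{}_2\mathrm{Cl}_S(k)|^2\,|U_S(k)/U_S(k)^2|^2$ respectively. You reach the decomposition via the split Faddeev/Lichtenbaum sequence $0\to\Br(k)\to\Br(E)\to H^1(k,E)\to 0$, whereas the paper invokes the Chernousov--Guletskii description (its Theorem~\ref{T:Elliptic2}) to realize the complement concretely as biquaternion classes $(r,x-b)_K\otimes(s,x-c)_K$; these are two routes to essentially the same splitting, and your counting via $H^1(k,E[2])\simeq(k^\times/k^{\times 2})^2$ is equivalent to the paper's parametrization by $(r,s)$.

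There is, however, a genuine gap at the step ``Hence the ${}_2\Br(k)$-part of ${}_2\Br(K)_V$ coincides with ${}_2\Br(k)_{V^k\setminus S}$.'' What you have shown is that a class lying purely in ${}_2\Br(k)$ (pulled back to $K$) is unramified at $w(v)$ if and only if it is unramified at $v$. But an element of ${}_2\Br(K)_V$ is a \emph{sum} of a constant class and a biquaternion class, and the unramifiedness condition at $w(v)$ constrains only the \emph{total} residue. Nothing in your argument rules out cancellation between the residue of the constant component and the residue of the biquaternion component at some $w(v)\in V_1$; without that, you cannot conclude that each component is separately constrained, and both your bounds become unjustified. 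This is precisely the difficulty the paper addresses with the corestriction map $\Cor_{K/F}$ for the degree-$3$ extension $K/F=k(E)/k(y)$: by the projection formula and Lemma~\ref{L:Elliptic1}, $\Cor_{K/F}$ annihilates the biquaternion part and returns (an odd multiple of) the constant part, and Lemma~\ref{L:Elliptic2} transports the unramifiedness of $[D]$ at $w$ to unramifiedness of $\Cor_{K/F}([D])$ at $\tilde v$; this isolates the constant component, after which the biquaternion component is the difference of two unramified classes and is therefore itself unramified (Lemma~\ref{L:Elliptic3}). To make your version rigorous you would need either to import this corestriction projection, or to argue directly that the residue character coming from ${}_2\Br(k)$ (which factors through $\mathcal{G}^{(w)}\to\mathcal{G}^{(v)}$, i.e.\ corresponds to a constant quadratic extension of $\overline{K}_w$) is $\Z/2\Z$-linearly independent from $\kappa_{(x-b)}$ and $\kappa_{(x-c)}$; the latter is true --- it amounts to the fact that none of $\overline{x-b}$, $\overline{x-c}$, $\overline{(x-b)(x-c)}$ is a constant times a square in $\overline{K}_w$, because the corresponding half-divisors are nontrivial $2$-torsion points of the reduced elliptic curve --- but it is an additional argument, not a routine tame-symbol check, and it is not in your sketch.
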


Our proof will make use of the following description of the
geometric Brauer group ${}_2\Br(K)_{V_0}$ in the split case, which
is valid for any field $k$ of characteristic $\neq 2, 3$.
\begin{thm}\label{T:Elliptic2}
{\rm (\cite[Theorem 3.6]{CGu})} Assume that the elliptic curve $E$
given by $(\ref{E:Elliptic0})$ splits over $k$, i.e. $$f(x)=(x -
a)(x - b)(x - c) \ \  \text{with} \ \  a, b, c \in k.$$ Then
$$
{}_2\Br(K)_{V_0} = {}_2\Br(k) \oplus I,
$$
where ${}_2\Br(k)$ is identified with a subgroup of ${}_2\Br(K)$ via
the canonical map $\Br(k) \to \Br(K)$, and $I \subset
{}_2\Br(K)_{V_0}$ is a subgroup such that every element of $I$ is
represented by a bi-quaternion algebra of the form
$$
(r , x - b)_K \otimes_K (s , x - c)_K
$$
for some $r , s \in k^{\times}$.
\end{thm}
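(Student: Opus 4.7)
The strategy is to combine the Faddeev--Lichtenbaum exact sequence for the Brauer group of a smooth curve with an explicit 2-descent on $E$. The starting point is the exact sequence
$$
0 \to \Br(E) \to \Br(K) \stackrel{\oplus \rho_v}{\longrightarrow} \bigoplus_{v \in V_0} H^1(\mathcal{G}^{(v)}, \Q/\Z),
$$
whose middle map is the direct sum of the residues at the geometric places; its kernel on the 2-torsion is precisely ${}_2\Br(K)_{V_0} = {}_2\Br(E)$. Since $E$ has a $k$-rational point (the origin $O$), pullback along $O$ retracts the canonical map $\Br(k) \to \Br(E)$, yielding the direct sum decomposition
$$
{}_2\Br(E) = {}_2\Br(k) \oplus I', \qquad I' := \ker\bigl({}_2\Br(E) \to {}_2\Br(k)\bigr).
$$
It thus suffices to show that every class in $I'$ is represented by a bi-quaternion algebra of the prescribed form.

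To describe $I'$, I would use the Hochschild--Serre spectral sequence for the structure morphism $E \to \mathrm{Spec}(k)$ together with Kummer theory. The edge map yields an injection $I' \hookrightarrow H^1(k, \Pic(\bar{E}))[2]$; since the degree component $\Z$ contributes nothing to $H^1$, this reduces to $I' \hookrightarrow H^1(k, \bar{E})[2]$. Feeding in the Kummer sequence for multiplication by $2$ on $\bar{E}$ gives
$$
E(k)/2E(k) \to H^1(k, E[2]) \to H^1(k, \bar{E})[2] \to 0.
$$
Because $E$ splits, the group scheme $E[2]$ is the constant group $(\Z/2\Z)^2$ with the canonical basis $(b,0)$, $(c,0)$, and therefore $H^1(k, E[2]) \cong k^\times/(k^\times)^2 \oplus k^\times/(k^\times)^2$.

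The heart of the proof is making the composite $H^1(k, E[2]) \to H^1(k, \bar{E})[2] \twoheadrightarrow I' \hookrightarrow \Br(K)$ explicit. A pair $(r,s) \in k^\times/(k^\times)^2 \oplus k^\times/(k^\times)^2$ classifies an $E[2]$-torsor that becomes trivial over $K$ after adjoining $\sqrt{r(x-b)}$ and $\sqrt{s(x-c)}$, where one crucially uses the affine equation $y^2 = (x-a)(x-b)(x-c)$ to exhibit compatible Galois-equivariant sections. A direct cocycle computation in $H^2(\Ga(\bar{k}/k), K_{\mathrm{sep}}^\times) = \Br(K)$ then identifies the image of $(r,s)$ with the class of
$$
(r, x-b)_K \otimes_K (s, x-c)_K.
$$
A routine tame-symbol check at each $v \in V_0$, using $r, s \in k^\times$ and the relation $(x-a)(x-b)(x-c) \in (K^\times)^2$, shows that this bi-quaternion algebra is unramified, so its class lies in $I'$.

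The main obstacle is precisely this explicit identification in the third step: tracing the connecting map from $H^1(k, E[2])$ through the Hochschild--Serre edge into $\Br(K)$ and recognising its image as the stated symbol algebra requires a careful choice of cocycle representatives compatible with both the additive law on $E[2]$ and the multiplicative structure underlying the biquaternion symbol. Once this identification is completed, the stated decomposition ${}_2\Br(K)_{V_0} = {}_2\Br(k) \oplus I$ follows from the splitting in the first paragraph, with $I$ defined as the image of the map $(r,s) \mapsto [(r,x-b)_K \otimes_K (s,x-c)_K]$, and the surjectivity of this map onto $I' = I$ follows from the Kummer sequence above.
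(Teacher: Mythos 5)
The paper gives no proof of this theorem --- it is quoted as \cite[Theorem~3.6]{CGu} --- so there is no in-house argument to compare you against; the assessment below is of your proposal on its own terms. Your framework is sound and is almost certainly the intended route. The identification ${}_2\Br(K)_{V_0} = {}_2\Br(E)$, the splitting $\Br(E) \cong \Br(k) \oplus H^1(k, \Pic(\bar{E}))$ coming from Hochschild--Serre together with the rational point $O$ (which kills the relevant $d_2$ into $H^3(k,\bar{k}^\times)$), the degree splitting $\Pic(\bar{E}) \cong \bar{E}(\bar{k}) \oplus \Z$ which reduces matters to $H^1(k,\bar{E})[2]$, and the Kummer surjection $H^1(k, E[2]) \cong (k^\times/(k^\times)^2)^2 \twoheadrightarrow H^1(k,\bar{E})[2]$ are all correct and standard. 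Your tame-symbol check is indeed routine: since $\mathrm{div}(x-b) = 2(b,0) - 2O$ and $\mathrm{div}(x-c) = 2(c,0) - 2O$ have even coefficients, the symbols $(r,x-b)_K$ and $(s,x-c)_K$ with $r,s\in k^\times$ are unramified at every $v\in V_0$.

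The step you flag is a genuine gap, and it is in fact the whole content of the theorem. Asserting that the composite $H^1(k,E[2]) \twoheadrightarrow H^1(k,\bar{E})[2] \cong I' \hookrightarrow \Br(K)$ sends $(r,s)$ to $[(r,x-b)_K \otimes_K (s,x-c)_K]$ requires an actual cocycle chase: represent the image of $(r,s)$ in $H^1(k,\Pic(\bar{E}))$ by an explicit $1$-cocycle, lift it through $0 \to \bar{k}(E)^\times/\bar{k}^\times \to \Div(\bar{E}) \to \Pic(\bar{E}) \to 0$ (here one uses $H^1(k,\Div(\bar{E}))=0$ by Shapiro and $H^1(k,\bar{k}(E)^\times)=0$ by Hilbert 90), land in $H^2(k,\bar{k}(E)^\times)\subset \Br(K)$, and then recognize the resulting $2$-cocycle as the class of the bi-quaternion algebra. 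Without that computation, the formula is a well-motivated guess, not a proof. You correctly identify where the difficulty lives and the answer you name is the right one; to make this a complete argument you would have to carry out the diagram chase (or invoke a known explicit description of the Kummer pairing for split genus-one curves).
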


\vskip1mm

Our argument for Theorem \ref{T:Elliptic1} will require us to consider separately
the ramification properties at places in
$V_1$ of the constant and bi-quaternionic parts of elements of
${}_2\Br(K)_{V_0}$. This analysis will be based on properties of the
{\it corestriction map} (cf. \cite[Ch. 8]{Salt}, \cite{Tign}). We
recall that given a finite separable field extension $K/F$, there is
a group homomorphism $\Cor_{K/F} \colon \Br(K) \to \Br(F)$ with the
following properties (cf. \cite{Tign}, Theorems 2.5 and 3.2):

\vskip2mm

(a) \parbox[t]{11cm}{the composition  $$\Br(F) \longrightarrow
\Br(K) \stackrel{\Cor_{K/F}}{\longrightarrow} \Br(F)$$ coincides
with multiplication by $[K : F]$;}

\vskip2mm

(b) \parbox[t]{13cm}{({\it projection formula}) if $\mathrm{char}\:
F \neq 2$, then for any $r \in F^{\times}$, $s \in K^{\times},$ we
have
$$
\Cor_{K/F}([(r , s)_{K}]) = [(r , N_{K/F}(s))_{F}].
$$}

\vskip3mm

(We note that the projection formula is valid not only for
quaternion algebras but for the symbol algebras of any degree $n$
provided that $n$ is prime to $\mathrm{char}\: K$ and $K$ contains a
primitive $n$th root of unity.) We will also need the following
statement, which easily follows from results proved in \cite{Salt}.
\begin{lemma}\label{L:Elliptic2}
Let $K/F$ be a finite separable field extension, $v$  a discrete
valuation of $F$, and $w$  an extension of $v$ to $K$. Assume that
\begin{equation}\label{E:Elliptic3-1}
[K : F] = [\overline{K}_w : \overline{F}_v]
\end{equation}
(then the extension $w$ is automatically \emph{unique}) and the
extension of the residue fields $\overline{K}_w/\overline{F}_v$ is
separable. If $n$ is prime to the characteristic of the residue
field $\overline{F}_v$ and $[A] \in {}_n\Br(K)$ is unramified at
$w,$ then $\Cor_{K/F}([A])$ is unramified at $v$.
\end{lemma}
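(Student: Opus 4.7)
My plan is to reduce to the case of complete discretely valued fields and then invoke the standard identification of the unramified part of the Brauer group with the Brauer group of the residue field, together with its compatibility with corestriction in the unramified setting, all of which are developed in \cite[Ch.~10]{Salt}.

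For the reduction, I would use that the residue maps $\rho_v$ and $\rho_w$ each factor through the natural maps to the Brauer groups of the completions: a class in ${}_n\Br(K)$ is unramified at $w$ if and only if its image in ${}_n\Br(\widehat{K}_w)$ is unramified, and similarly for $F$ at $v$. Since $w$ is the unique extension of $v$ to $K$, the compatibility of corestriction with scalar extension produces a commutative square
$$
\begin{array}{ccc}
{}_n\Br(K) & \stackrel{\Cor_{K/F}}{\longrightarrow} & {}_n\Br(F) \\
\downarrow & & \downarrow \\
{}_n\Br(\widehat{K}_w) & \stackrel{\Cor_{\widehat{K}_w/\widehat{F}_v}}{\longrightarrow} & {}_n\Br(\widehat{F}_v),
\end{array}
$$
which reduces the statement to the analogous claim for the unramified extension $\widehat{K}_w/\widehat{F}_v$ and its distinguished valuations.

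Next, the hypothesis $[K:F] = [\overline{K}_w : \overline{F}_v]$ forces the ramification index $e(w \vert v)$ to be $1$, so $\widehat{K}_w/\widehat{F}_v$ is unramified, with separable residue extension by assumption. In this complete unramified setting, and because $n$ is prime to $\mathrm{char}\: \overline{F}_v$, the residue sequence splits canonically: the kernel ${}_n\Br(\widehat{K}_w)_{\{w\}}$ is identified with ${}_n\Br(\overline{K}_w)$ via the specialization isomorphism (and likewise for $\widehat{F}_v$), and under these identifications $\Cor_{\widehat{K}_w/\widehat{F}_v}$ restricts to $\Cor_{\overline{K}_w/\overline{F}_v}$. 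Consequently, if $[A]$ is unramified at $w$, it comes from some $[\bar{A}] \in {}_n\Br(\overline{K}_w)$, and its corestriction is then the image of $\Cor_{\overline{K}_w/\overline{F}_v}([\bar{A}]) \in {}_n\Br(\overline{F}_v)$, hence lies in the unramified part of ${}_n\Br(\widehat{F}_v)$, yielding the desired conclusion.

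The only substantive input beyond routine manipulation is the compatibility of corestriction with the specialization isomorphism on unramified parts, which I plan to quote from \cite[Ch.~10]{Salt}; this is the sole potential obstacle and is exactly what makes the lemma follow \emph{easily} from results proved there. Everything else --- the reduction to completions via uniqueness of $w$ and the identification of kernels under the hypotheses on $e(w\vert v)$, $n$, and residue-field separability --- is formal.
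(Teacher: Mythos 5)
Your proof is correct, but it takes a genuinely different route from the paper's. The paper never passes to completions: it observes that uniqueness of $w$ makes $\mathcal{O}_{K,w}$ the integral closure of $\mathcal{O}_{F,v}$ in $K$, hence (by separability of $K/F$) a free $\mathcal{O}_{F,v}$-module of rank $[K:F]$, and then --- since $\overline{K}_w/\overline{F}_v$ is separable of full degree $[K:F]$ --- a \emph{separable} $\mathcal{O}_{F,v}$-algebra by \cite[Corollary 2.17]{Salt}. Unramifiedness of $[A]$ at $w$ yields an Azumaya $\mathcal{O}_{K,w}$-algebra $\mathcal{A}$ with $A=\mathcal{A}\otimes_{\mathcal{O}_{K,w}}K$, and \cite[Theorem 8.1]{Salt} shows $\Cor_{\mathcal{O}_{K,w}/\mathcal{O}_{F,v}}(\mathcal{A})$ is an Azumaya $\mathcal{O}_{F,v}$-algebra whose generic fibre represents $\Cor_{K/F}([A])$, giving unramifiedness at $v$. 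You instead complete, use $K\otimes_F\widehat{F}_v\simeq\widehat{K}_w$ (which is exactly where uniqueness of $w$ enters) to obtain the commuting square for corestriction, identify the unramified kernels with Brauer groups of residue fields, and invoke compatibility of $\Cor$ with the residue (or specialization) maps. Both arguments are sound. The paper's route stays entirely inside Saltman's integral Azumaya-algebra framework, avoiding completion and the base-change formula for $\Cor$; yours is the more standard residue-exact-sequence argument familiar from the Galois-cohomology literature. If you write yours up, make the identification $K\otimes_F\widehat{F}_v\simeq\widehat{K}_w$ and the formula $\Res_{F\to\widehat{F}_v}\circ\Cor_{K/F}=\Cor_{\widehat{K}_w/\widehat{F}_v}\circ\Res_{K\to\widehat{K}_w}$ explicit, since these are the nontrivial inputs you are glossing with ``corestriction commutes with scalar extension.''
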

\begin{proof}
First, (\ref{E:Elliptic3-1}) implies that the extension $w$ is
unique, and therefore  the valuation ring $\mathcal{O}_{K, w}$ is
the integral closure in $K$ of the valuation ring $\mathcal{O}_{F,
v}$. As $K/F$ is separable, it follows that $\mathcal{O}_{K, w}$ is
a free $\mathcal{O}_{F, v}$-module of rank $d = [K : F]$.
Furthermore, since the extension of residue fields
$\overline{K}_w/\overline{F}_v$ is also separable of degree $d$ (in
particular, $w \vert v$ is unramified), by \cite[Corollary
2.17]{Salt}, $\mathcal{O}_{K, w}$ is a separable $\mathcal{O}_{F,
v}$-algebra. The fact that $[A] \in {}_n\Br(K)$ is unramified at $w$
implies that  $A = \mathcal{A} \otimes_{\mathcal{O}_{K, w}} K$ for
some Azumaya $\mathcal{O}_{K, w}$-algebra $\mathcal{A}$ (cf.
\cite[Theorem 10.3]{Salt}). Then by \cite[Theorem 8.1, (a)]{Salt},
the corestriction (defined in {\it loc. cit.}) $\mathcal{B} :=
\Cor_{\mathcal{O}_{K, w}/\mathcal{O}_{F , v}}(\mathcal{A})$ is an
Azumaya $\mathcal{O}_{F, v}$-algebra, and by \cite[Theorem 8.1,
(d)]{Salt}, $\mathcal{B} \otimes_{\mathcal{O}_{F, v}} F$ represents
$\Cor_{K/F}([A])$, implying that the latter is unramified at $v$
(cf. \cite[Theorem 10.3]{Salt}), as required.
\end{proof}

\vskip4mm

For the rest of the section, we return to the notations $K = k(E)$
and $F = k(y)$. We will need the following two lemmas, the first of
which contains a simple computation and the second describes some
properties of the valuations $w \in V_1$ needed in the proof of Theorem
\ref{T:Elliptic1}.
\begin{lemma}\label{L:Elliptic1}
$(i)$ $N_{K/F}(x - a) = N_{K/F}(x - b) = N_{K/F}(x - c) = y^2.$

\noindent $(ii)$ Let $r \in k^{\times}$ and  $t \in \{a, b, c\}$,
then for the quaternion algebra $(r , x - t)_K$ we have
$$\Cor_{K/F}([(r , x - t)_K]) = 0.$$
\end{lemma}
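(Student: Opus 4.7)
The plan is to reduce both parts to a direct computation of the minimal polynomial of $x$ over $F=k(y)$, and then to invoke the projection formula for the corestriction (property (b) recalled just above the lemma).

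For part $(i)$, the defining equation $y^{2}=(x-a)(x-b)(x-c)$ shows that $x$ is a root of
\[
g(X)=(X-a)(X-b)(X-c)-y^{2}\in F[X].
\]
I would first verify that $g$ is irreducible over $F$, so that $g$ is the minimal polynomial of $x$ and $[K:F]=3$. Indeed, a rational root $p(y)/q(y)\in k(y)$ (in lowest terms) of $g$ would satisfy $(p-aq)(p-bq)(p-cq)=y^{2}q^{3}$, and a quick comparison of degrees in $y$ on the two sides rules this out. Denoting by $x_{1}=x,x_{2},x_{3}$ the three roots of $g$ in an algebraic closure of $F$, the standard formula for norms in a simple extension yields
\[
N_{K/F}(x-t)=\prod_{i=1}^{3}(x_{i}-t)=-g(t)=y^{2}-(t-a)(t-b)(t-c)=y^{2}
\]
for every $t\in\{a,b,c\}$, since the cubic $(t-a)(t-b)(t-c)$ vanishes at each such $t$. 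This gives $(i)$.

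For part $(ii)$, the extension $K/F$ is separable (as $\mathrm{char}\:k\neq 2$), so the projection formula applies directly to the quaternion symbol $(r,x-t)_{K}$ and, combined with $(i)$, gives
\[
\Cor_{K/F}\bigl([(r,x-t)_{K}]\bigr)=\bigl[(r,N_{K/F}(x-t))_{F}\bigr]=[(r,y^{2})_{F}].
\]
Since $y\in F^{\times}$, the element $y^{2}$ is a square in $F$, hence the quaternion algebra $(r,y^{2})_{F}$ is split and its Brauer class is trivial, proving $(ii)$.

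The only step that requires any care is the irreducibility of $g$, equivalently that $[K:F]=3$, which is what legitimises the product-over-conjugates formula used for the norm; this is a short degree count and should not be a serious obstacle.
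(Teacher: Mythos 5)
Your argument is correct and follows essentially the same route as the paper: both compute $N_{K/F}(x-t)$ via the degree-$3$ minimal polynomial and then apply the projection formula; the paper works with the minimal polynomial of $x-a$ (a shift) while you evaluate the minimal polynomial of $x$ at $t\in\{a,b,c\}$, and you additionally spell out the irreducibility that the paper merely asserts. One small slip: separability of $K/F$ is not a consequence of $\mathrm{char}\:k\neq 2$ (the extension is cubic); the right justification is that $\mathrm{char}\:k\neq 2,3$ (the standing hypothesis of this section) or, more directly, that the discriminant of $g$ specializes at $y=0$ to $\disc f=\delta\neq 0$, so $g$ is separable over $F$.
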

\begin{proof}
$(i)$: We will only prove the claim for $x - a$ as all other cases are treated analogously.
It is easy to see that the minimal and
characteristic polynomials for $x - a$ over $F$ coincide with
$$
g(t) = t(t + (a - b))(t + (a - c)) - y^2.
$$
So, $N_{K/F}(x - a) = (-1)^3 \cdot (-y^2) = y^2$.

\vskip3mm

$(ii)$: Using $(i)$ and the projection formula, we obtain
$$
\Cor_{K/F}([(r , x - t)_K]) = \left[ (r , N_{K/F}(x - t))_F \right]
= \left[ (r , y^2)_F \right] = 0.
$$
\end{proof}

\begin{lemma}\label{L:Elliptic3-2}
Let $v \in V^k \setminus S$,  and let $\tilde{v}$ be the extension
of $v$ to $F = k(y)$ given by $(\ref{E:Elliptic1})$.


\vskip2mm

$(i)$
\parbox[t]{16cm}{For any extension $w$ of $\tilde{v}$ to $K$, the
residue field extension $\overline{K}_w/\overline{F}_v$ is a
separable cubic extension. Consequently, $\tilde{v}$ has
a~\emph{unique} extension $($to be denoted $w = w(v))$, which is
automatically unramified.}

\vskip2mm

$(ii)$ \parbox[t]{16cm}{For $w = w(v)$, the elements $x - a$, $x -
b$, $x - c$ are units with respect to $w$, and their images
$\overline{x - a}$, $\overline{x - b}$ and $\overline{x - c}$ in
$\overline{K}_w$ represent distinct nontrivial cosets in $
\overline{K}_w^{\times}/{ \overline{K}_w^{\times}}^2$.}
\end{lemma}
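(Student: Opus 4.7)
By the choice of $S$, for $v \in V^k \setminus S$ the coefficients $\alpha, \beta, \gamma$ lie in $\mathcal{O}_{k,v}$, so $a,b,c$ do as well (they are roots of a monic polynomial with coefficients in $\mathcal{O}_{k,v}$), and $v(\delta) = 0$ together with $\delta = \pm (a-b)^2(a-c)^2(b-c)^2$ forces $a-b, a-c, b-c \in \mathcal{O}_{k,v}^{\times}$. I will work with the minimal polynomial of $x$ over $F = k(y)$,
\[
g(X) = X^3 + \alpha X^2 + \beta X + \gamma - y^2,
\]
whose coefficients are $\tilde v$-integral since $\tilde v(y^2) = 0$, and whose reduction is
\[
\bar g(X) = (X - \bar a)(X - \bar b)(X - \bar c) - \bar y^2 \in \overline{F}_v[X], \qquad \overline{F}_v = \overline{k}_v(\bar y).
\]

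For part $(i)$, I will first show $\bar g$ is irreducible over $\overline{F}_v$: any root $\phi \in \overline{k}_v(\bar y)$ would satisfy $(\phi - \bar a)(\phi - \bar b)(\phi - \bar c) = \bar y^2$, and a pole-and-degree analysis in $\bar y$ (the right side has no finite poles and degree $2$, while the left side has degree $3\deg \phi$ when $\phi$ is a polynomial, and acquires finite poles otherwise) rules this out. Next, since $\bar\delta \neq 0$ the polynomial $\bar f$ is separable, so $\bar f' \neq 0$ and has no root in common with $\bar f$; any common root $\xi$ of $\bar g$ and $\bar g' = \bar f'$ would be a root of the nonzero $\bar f' \in \overline{k}_v[X]$, hence algebraic over $\overline{k}_v$, and then $\bar f(\xi) = \bar y^2$ would force $\bar y$ to be algebraic over $\overline{k}_v$, contradicting transcendence. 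With $\bar g$ separable and irreducible, Gauss's lemma plus Hensel's lemma force $g$ to remain irreducible in $\hat F_{\tilde v}[X]$, so $K \otimes_F \hat F_{\tilde v} = \hat F_{\tilde v}[X]/(g)$ is a field. This gives a unique extension $w$ of $\tilde v$ to $K$, with residue field $\overline{F}_v[X]/(\bar g)$ a separable cubic extension of $\overline{F}_v$, so $f(w \mid \tilde v) = 3 = [K:F]$ and consequently $e(w \mid \tilde v) = 1$.

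For part $(ii)$, integrality of $x$ over $\mathcal{O}_{F, \tilde v}$ gives $x \in \mathcal{O}_{K,w}$, and reducing $g(x) = 0$ modulo $w$ yields $(\bar x - \bar a)(\bar x - \bar b)(\bar x - \bar c) = \bar y^2 \neq 0$, so $\bar x \notin \{\bar a, \bar b, \bar c\}$ and each of $x - a, x - b, x - c$ is a $w$-unit. To see that their images give nontrivial and pairwise distinct classes in $\overline{K}_w^{\times}/(\overline{K}_w^{\times})^2$, I will identify $\overline{K}_w = \overline{k}_v(\bar x, \bar y)$ with the function field of the reduced elliptic curve
\[
\bar E \colon Y^2 = (X - \bar a)(X - \bar b)(X - \bar c)
\]
over $\overline{k}_v$, which has genus $1$ because $\bar\delta \neq 0$. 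On $\bar E$, the function $\bar x - \bar a$ has divisor $2\bar P_a - 2 O$, where $\bar P_a = (\bar a, 0)$ and $O$ is the point at infinity. If $\bar x - \bar a = h^2$ in $\overline{K}_w$, then $\mathrm{div}(h) = \bar P_a - O$, so $h$ would realize a degree-$1$ map $\bar E \to \mathbb{P}^1$, contradicting $\mathrm{genus}(\bar E) = 1$. The same argument applies to $\bar x - \bar b$ and $\bar x - \bar c$. Since $(\bar x - \bar a)(\bar x - \bar b)(\bar x - \bar c) = \bar y^2$ is a square, the three classes multiply to $1$ in $\overline{K}_w^{\times}/(\overline{K}_w^{\times})^2$; if any two coincided, the third would be trivial, contradicting nontriviality, so the three classes are pairwise distinct.

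\noindent \textbf{Main obstacle.} The delicate step is proving irreducibility and separability of $\bar g$ uniformly across residue characteristics, notably when $\mathrm{char}\,\overline{k}_v = 3$ and $\bar g' = \bar f'$ drops in degree. The crucial device is the transcendence of $\bar y$ over $\overline{k}_v$, which forces any hypothetical root of $\bar g$ in $\overline{F}_v$ (or common root of $\bar g$ and $\bar g'$) to be algebraic over $\overline{k}_v$, producing an immediate contradiction with the right-hand side $\bar y^2$. A secondary technical point is the non-squareness of $\bar x - \bar a$ in $\overline{K}_w$; this reduces to the standard observation that a smooth projective curve of positive genus admits no rational function with one simple zero and one simple pole.
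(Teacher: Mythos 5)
Your argument is correct. For part $(i)$ you follow essentially the same route as the paper: reduce the minimal polynomial $g$ of $x$ modulo $\tilde v$, show the reduction $\bar g$ is irreducible over $\overline{k}_v(\bar y)$ by a degree/pole count in $\bar y$, and show it is separable; the extra machinery you invoke (Gauss's lemma, Hensel's lemma, completion) is unnecessary but harmless — the paper simply notes that for \emph{any} extension $w$ of $\tilde v$ the residue extension $\overline{K}_w/\overline{F}_{\tilde v}$ already has degree 3, so uniqueness and unramifiedness follow from the fundamental inequality $\sum e_i f_i \leq [K:F]$. Your separability argument (common roots of $\bar g$ and $\bar g'=\bar f'$ would be algebraic over $\overline{k}_v$, forcing $\bar y$ algebraic) differs slightly from the paper's (which observes that the discriminant of $\bar g$, as a polynomial in $\bar y$, specializes to $\bar\delta \neq 0$ at $\bar y = 0$ and hence is nonzero), but both are valid.

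For part $(ii)$ you take a genuinely different and more geometric route: you identify $\overline{K}_w$ with the function field of the reduced elliptic curve $\bar E$, compute $\mathrm{div}(\bar x - \bar a) = 2\bar P_a - 2O$, and observe that a square root would yield a degree-1 rational map $\bar E \to \mathbb{P}^1$, impossible for a genus-1 curve. The paper instead argues elementarily via Kummer theory inside the quadratic extension $\overline{K}_w/\overline{k}_v(\bar x)$: if $\bar x - \bar a$ were a square in $\overline{K}_w$ then $\overline{k}_v(\bar x)(\sqrt{\bar x - \bar a}) = \overline{k}_v(\bar x)(\bar y)$, which would force $(\bar x - \bar b)(\bar x - \bar c)$ to be a square in $\overline{k}_v(\bar x)$, impossible since $\bar b \neq \bar c$. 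Your divisor-theoretic argument is conceptually illuminating (and generalizes readily to higher-genus situations), while the paper's avoids any appeal to the theory of curves and keeps the proof purely field-theoretic. Both then deduce pairwise distinctness of the three cosets from the relation $(\bar x - \bar a)(\bar x - \bar b)(\bar x - \bar c) = \bar y^2$ in the same way.
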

\begin{proof}
$(i)$: The relation
$$
x^3 + \alpha x^2 +  \beta x + (\gamma - y^2) = 0
$$
implies that $w(x) \geqslant 0$. Reducing, we obtain  $\phi(\bar{x})
= 0$ for the  polynomial
$$
\phi(t) = t^3 + \bar{\alpha} t^2 + \bar{\beta} t + (\bar{\gamma} -
\bar{y}^2)
$$
over $\overline{F}_{\tilde{v}} = \overline{k}_v(\bar{y})$, the field
of rational functions over the residue field $\overline{k}_v$ of
$k$. By considering the degree with respect to $\bar{y}$ one finds
that $\phi(t)$ has no roots in, hence is irreducible over,
$\overline{k}_v(\bar{y})$. Furthermore, for the discriminant
$\delta(\bar{y})$ of $\phi$ we have $\delta(0) = \bar{\delta}$,
where $\delta$ is the discriminant of $f$. So, it follows from our
choice of $S$ that $\delta(\bar{y}) \neq 0$, making $\phi$
separable. Thus, $\overline{k}_v(\bar{x} , \bar{y})$ is a separable
cubic extension of $\overline{k}_v(\bar{y})$. By degree
considerations, $\overline{K}_w = \overline{k}_v(\bar{x} ,
\bar{y})$, and all of our assertions follow.

\vskip2mm

$(ii)$: Since $w(y^2) = 0$ and $w(x) \geqslant 0$,
$$y^2 = (x - a)(x - b)(x - c)$$ yields that $x - a$, $x - b$ and $x
- c$ are all $w$-units. Furthermore, our assumption that
$\bar{\delta} \neq 0$ means that the residues $\bar{a}, \bar{b},
\bar{c}$, hence the residues $\bar{x} - \bar{a}$, $\bar{x} -
\bar{b}$ and $\bar{x} - \bar{c}$, are pairwise distinct. First, let
us show that no $d \in \{\bar{x} - \bar{a}, \bar{x} - \bar{b} ,
\bar{x} - \bar{c} \}$ can be a square in $\overline{K}_w$. Indeed,
if, for example, $d = \bar{x} - \bar{a} \in
{\overline{K}_w^{\times}}^2$ then since $d \notin {
\overline{k}_v(\bar{x})^{\times}}^2$ and $[\overline{K}_w :
\overline{k}_v(\bar{x})] \leqslant 2$, we obtain that
$$
\overline{K}_w = \overline{k}_v(\bar{x})(\bar{y}) =
\overline{k}_v(\bar{x})(\sqrt{d}).
$$
Consequently,
$$
\frac{\bar{y}^2}{d} = (\bar{x} - \bar{b})(\bar{x} - \bar{c}) \in
{\overline{k}_v(\bar{x})^{\times}}^2,
$$
which is impossible as $\bar{b} \neq \bar{c}$. Thus, $\bar{x} -
\bar{a}$ is not a square in $\overline{K}_w$. Furthermore, if, for
example, $\bar{x} - \bar{b}$ and $\bar{x} - \bar{c}$ would represent
the same coset modulo ${\overline{K}_w^{\times}}^2$ then $\bar{x} -
\bar{a}$ would be a square in $\overline{K}_w$, which is not the
case.
\end{proof}

\vskip2mm \noindent {\bf Remark 4.6.} Using the uniqueness statement in
\cite[Ch. VI, \S 10, Proposition~2]{Bour}, one easily proves that
the restriction of $w$ to $k(x)$ is given by
$$
w(q(x)) = \min_{b_j \neq 0} v(b_j) \ \ \text{for} \ \ q(x) = b_mx^m
+ \cdots + b_0 \in k[x] \setminus \{ 0 \}.
$$

\vskip2mm

\addtocounter{thm}{1}

We are now in a position to determine when bi-quaternion algebras of
the form described in Theorem \ref{T:Elliptic2} are unramified at
places $w \in V_1$.
\begin{prop}\label{P:Elliptic-ramif}
Let $v \in V^k \setminus S$, and let $w = w(v)$ be the corresponding
valuation of $K$ (see Lemma \ref{L:Elliptic3-2}). If
$$
\Delta = (r , x-b)_K \otimes_K (s , x - c)_K
$$
is unramified at $w$ then
$$
v(r) \; , v(s) \; \equiv 0(\mathrm{mod}\: 2).
$$
\end{prop}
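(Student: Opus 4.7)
The plan is to compute the residue $\rho_w([\Delta]) \in \overline{K}_w^{\times}/{\overline{K}_w^{\times}}^2$ explicitly via the tame symbol formula for quaternion symbols, and then extract the parity information about $v(r)$ and $v(s)$ from Lemma \ref{L:Elliptic3-2}(ii).

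First I would recall that, via the identification $\mathrm{Hom}(\mathcal{G}^{(w)},\Z/2\Z) \simeq \overline{K}_w^{\times}/{\overline{K}_w^{\times}}^2$ (valid because $\mathrm{char}\:\overline{K}_w \neq 2$), the residue map $\rho_w$ on a class $[(\alpha,\beta)_K] \in {}_2\Br(K)$ is given by the tame symbol
$$
\rho_w([(\alpha,\beta)_K]) \;=\; (-1)^{w(\alpha)w(\beta)}\cdot \overline{\alpha^{w(\beta)}\beta^{-w(\alpha)}} \pmod{{\overline{K}_w^{\times}}^2}.
$$
By Lemma \ref{L:Elliptic3-2}(i), the extension $w \vert v$ (hence $w \vert \tilde{v}$ as well) is unramified, so $w(r) = v(r)$ and $w(s) = v(s)$; and by Lemma \ref{L:Elliptic3-2}(ii), $x-b$ and $x-c$ are $w$-units, so $w(x-b) = w(x-c) = 0$. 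The sign in the tame symbol therefore drops out, and the formula collapses to
$$
\rho_w([(r,x-b)_K]) \;=\; (\overline{x-b})^{v(r)}, \qquad \rho_w([(s,x-c)_K]) \;=\; (\overline{x-c})^{v(s)},
$$
understood modulo ${\overline{K}_w^{\times}}^2$.

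Next, since $\rho_w$ is a group homomorphism, the assumption that $\Delta$ is unramified at $w$ gives
$$
(\overline{x-b})^{v(r)} \cdot (\overline{x-c})^{v(s)} \;\in\; {\overline{K}_w^{\times}}^2.
$$
Finally, Lemma \ref{L:Elliptic3-2}(ii) tells us that $\overline{x-b}$ and $\overline{x-c}$ represent two \emph{distinct} nontrivial classes in the $\F_2$-vector space $\overline{K}_w^{\times}/{\overline{K}_w^{\times}}^2$, hence are $\F_2$-linearly independent there. Consequently the displayed product is trivial modulo squares only when both exponents are even, giving $v(r) \equiv v(s) \equiv 0 \pmod{2}$, as required.

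There is essentially no hard step: the argument is a direct application of the tame symbol formula, with all the geometric content packaged into Lemma \ref{L:Elliptic3-2}. The only care needed is to check that the sign $(-1)^{w(\alpha)w(\beta)}$ in the tame symbol is harmless, which it is because $w(x-b) = w(x-c) = 0$ makes the exponent of $-1$ vanish in each factor.
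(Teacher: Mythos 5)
Your proof is correct and essentially the same as the paper's: the paper computes $\rho_w$ via the Galois-theoretic characters $\kappa_h \colon \mathcal{G}^{(w)} \to \Z/2\Z$ and uses surjectivity of $\sigma \mapsto (\kappa_{(x-b)}(\sigma), \kappa_{(x-c)}(\sigma))$, which is precisely the character-side reformulation of your observation that $\overline{x-b}$ and $\overline{x-c}$ are $\F_2$-independent in $\overline{K}_w^{\times}/{\overline{K}_w^{\times}}^2$. Your tame-symbol computation and the paper's formula $\rho_w([(g,h)_K])(\sigma) = \tfrac{w(g)\kappa_h(\sigma)}{2}\ (\mathrm{mod}\ \Z)$ are the same residue calculation under the canonical identification, so the two arguments differ only in notation.
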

\begin{proof}
We will need the following well-known description of the values of
the residue map
$$
\rho_w \colon {}_2\Br(K) \longrightarrow
\mathrm{Hom}(\mathcal{G}^{(w)} , \Q/\Z)
$$
(recall that by our construction $\mathrm{char}\: \overline{K}_w
\neq 2$) on quaternion algebras. For $h \in K^{\times}$ such that
$w(h) = 0$, we define $\kappa_h \colon \mathcal{G}^{(w)} \to \Z/2\Z$
by
$$
\kappa_h(\sigma) = \left\{ \begin{array}{ccl} 0(\mathrm{mod}\: 2) &
\text{if} & \sigma(\sqrt{\bar{h}}) = \sqrt{\bar{h}}, \\
1(\mathrm{mod}\: 2) & \text{if} & \sigma(\sqrt{\bar{h}}) =
-\sqrt{\bar{h}}, \end{array} \right. \hskip4mm \text{for} \ \ \sigma
\in \mathcal{G}^{(w)},
$$
where $\bar{h} \in \overline{K}_w^{\times}$ is the residue of $h$.
Then given $g , h \in K^{\times}$ with  $w(h) = 0$, we have
$$
\rho_w([(g , h)_K])(\sigma) = \frac{w(g)\kappa_{h}(\sigma)}{2}
(\mathrm{mod}\: \Z).
$$
Applying this to $\Delta$ as in the statement of the proposition, we
obtain
\begin{equation}\label{E:Elliptic20}
\rho_w([\Delta])(\sigma) = \frac{v(r) \kappa_{(x - b)}(\sigma) +
v(s)\kappa_{(x - c)}(\sigma)}{2} (\mathrm{mod}\: \Z).
\end{equation}
Since by Lemma \ref{L:Elliptic3-2}$(ii)$, the elements $\overline{x
- b}$ and $\overline{x - c}$ represent different nontrivial cosets
in $\overline{K}_w^{\times} / {\overline{K}_w^{\times}}^2$, the map
$$
\mathcal{G}^{(w)} \longrightarrow \Z/2\Z \times \Z/2\Z, \ \ \ \sigma
\mapsto (\kappa_{(x - b)}(\sigma) , \kappa_{(x - a)}(\sigma))
$$
is surjective. Using this in conjunction with the fact that
$\rho_w([\Delta])$ given by (\ref{E:Elliptic20}) is actually
trivial, we easily obtain our claim.
\end{proof}

\vskip2mm

\vskip1mm

\noindent {\it Proof of Theorem \ref{T:Elliptic1}.} Let $[D] \in
{}_2\Br(K)_V$. According to Theorem \ref{T:Elliptic2}, we can write
\begin{equation}\label{E:Elliptic4-1}
[D] = [\Delta' \otimes_K \Delta''],
\end{equation}
where $\Delta' = \Delta_0 \otimes_k K$ for some central division
algebra $\Delta_0$ over $k$ such that $[\Delta_0] \in {}_2\Br(k)$,
and
$$
\Delta'' = (r , x - b)_K \otimes_K (s , x - c)_K
$$
for some $r , s \in k^{\times}$. The next lemma describes some
restrictions on $\Delta_0$ and $\Delta''$, which will enable us to
limit the number of possibilities for these algebras and eventually
prove the theorem.
\begin{lemma}\label{L:Elliptic3}
$(i)$ $\Delta_0$ is unramified at all $v \in V^k \setminus S$.

\noindent $(ii)$ $\Delta''$ is unramified at all $w \in V_1$.
\end{lemma}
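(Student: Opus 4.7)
The plan is to apply the corestriction map $\Cor_{K/F}$ (with $F = k(y)$) to the decomposition (\ref{E:Elliptic4-1}) in order to isolate the ``constant'' class $[\Delta_0 \otimes_k F]$, and then to descend ramification information from $F$ back to $k$. For part (i), Lemma~\ref{L:Elliptic1}(ii) gives $\Cor_{K/F}([\Delta'']) = 0$ directly, since $\Delta''$ is the tensor product of two quaternion algebras of the form $(r,x-t)_K$ with $t \in \{b,c\}$. On the other hand, $[\Delta'] = [\Delta_0 \otimes_k K]$ is the restriction to $K$ of $[\Delta_0 \otimes_k F]$, so property (a) of corestriction yields $\Cor_{K/F}([\Delta']) = [K:F] \cdot [\Delta_0 \otimes_k F] = 3 [\Delta_0 \otimes_k F] = [\Delta_0 \otimes_k F]$, the last equality because $[\Delta_0]$ has order dividing $2$. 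Thus $\Cor_{K/F}([D]) = [\Delta_0 \otimes_k F]$ in $\Br(F)$.

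Next, I would apply Lemma~\ref{L:Elliptic2} to this identity. Since $v \notin S \supset V^k(2)$, the integer $2$ is prime to $\mathrm{char}\:\overline{F}_{\tilde v} = \mathrm{char}\:\overline{k}_v$, and by Lemma~\ref{L:Elliptic3-2} the valuation $w = w(v)$ is the unique extension of $\tilde v$ to $K$, with $[K:F] = 3 = [\overline{K}_w : \overline{F}_{\tilde v}]$ and a separable residue field extension. Since $[D]$ is unramified at $w \in V_1 \subset V$, Lemma~\ref{L:Elliptic2} gives that $\Cor_{K/F}([D]) = [\Delta_0 \otimes_k F]$ is unramified at $\tilde v$. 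It remains to transfer this to the fact that $\Delta_0$ is unramified at $v$: by Remark~4.6 (and standard properties of the Gauss valuation), $\tilde v$ has ramification index $1$ over $v$ and residue field $\overline{F}_{\tilde v} = \overline{k}_v(\bar y)$, in which $\overline{k}_v$ is algebraically closed, so the natural restriction $\mathcal{G}^{(\tilde v)} \to \mathcal{G}^{(v)}$ is surjective; combined with the compatibility of residue maps under scalar extension (which reduces to Theorem~\ref{T:Ram2} after passing to completions), this gives $\rho_{\tilde v}([\Delta_0 \otimes_k F]) = \rho_v([\Delta_0]) \circ \mathrm{res}$, so the triviality of the left-hand side forces $\rho_v([\Delta_0]) = 1$, proving (i).

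For part (ii), once $\Delta_0$ is known to be unramified at $v$, the same compatibility applied to the composite extension $k \subset K$ at $w = w(v)$ (which has $e(w \vert v) = 1$ and separable residue extension, by Lemma~\ref{L:Elliptic3-2}) shows that $[\Delta'] = [\Delta_0 \otimes_k K]$ is unramified at $w$. Since $[D] = [\Delta'] + [\Delta'']$ is unramified at $w$ by hypothesis, it follows that $[\Delta'']$ is unramified at $w$, which is the conclusion of (ii). The main technical subtlety in this argument is the last step of (i): the extension $k \subset F = k(y)$ is not finite, so Theorem~\ref{T:Ram2} does not apply verbatim, and one must instead pass to completions and use the explicit description of the Gauss valuation in Remark~4.6 to justify the compatibility of residue maps used to descend from $\tilde v$ to $v$.
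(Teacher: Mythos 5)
Your proof follows the same overall strategy as the paper's: apply $\Cor_{K/F}$ to kill $[\Delta'']$ via Lemma~\ref{L:Elliptic1}(ii), obtain $\Cor_{K/F}([D]) = [\Delta_0 \otimes_k F]$, and invoke Lemma~\ref{L:Elliptic2} together with Lemma~\ref{L:Elliptic3-2}(i) to conclude that $[\Delta_0 \otimes_k F]$ is unramified at $\tilde{v}$. Part~(ii) is also handled the same way in both proofs: once $\Delta'$ is seen to be unramified at $w$, so is $\Delta'' = D \otimes \Delta'^{\mathrm{op}}$.

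The one place you genuinely diverge is the final descent from ``$\Delta_0 \otimes_k F$ unramified at $\tilde{v}$'' to ``$\Delta_0$ unramified at $v$.'' You argue via functoriality of the residue map for the (unramified, residually transcendental) Gauss extension $k_v \subset F_{\tilde{v}}$, noting that the induced map $\mathcal{G}^{(\tilde{v})} \to \mathcal{G}^{(v)}$ is surjective because $\overline{k}_v$ is relatively algebraically closed in $\overline{k}_v(\bar{y})$, and you correctly flag the subtlety that Theorem~\ref{T:Ram2} is stated only for finite extensions, so this compatibility needs to be justified by passing to completions and using the cohomological construction of the residue map directly. The paper instead sidesteps this subtlety with a concrete symbol computation: since $k$ is a number field and $[\Delta_0]$ has exponent~$2$, one may write $\Delta_0 = (r,s)_k$ with $v(r)=0$, $\bar r \notin (\overline{k}_v^{\times})^2$ and $v(s)=1$ in the ramified case, and then check directly that $(r,s)_F$ is ramified at $\tilde{v}$ because $\bar r$ remains a non-square in $\overline{k}_v(\bar{y})$ and $\tilde{v}(s) = 1$. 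Your argument is more general (it would work without the exponent-two or number-field hypotheses), but requires filling in a compatibility that the paper's hands-on quaternion calculation renders unnecessary. Both are correct.
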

\begin{proof}
$(i)$: Fix $v \in V^k \setminus S$, and let $w = w(v)$. Since $[D]$
is unramified at $w$, it follows from Lemma \ref{L:Elliptic2} in
conjunction with Lemma \ref{L:Elliptic3-2}$(i)$ that
$\Cor_{K/F}([D])$ is unramified at $\tilde{v}$. On the other hand,
by Lemma \ref{L:Elliptic1}$(ii)$ we have that
$\Cor_{K/F}([\Delta'']) = 0$, and therefore
$$
\Cor_{K/F}([D]) = \Cor_{K/F}([\Delta']) = 3 \cdot  [\Delta_0
\otimes_k F] = [\Delta_0 \otimes_k F]
$$
as $\Delta' = (\Delta_0 \otimes_k F) \otimes_F K$. Thus, $\Delta_0
\otimes_k F$ is unramified at $\tilde{v}$, which implies  that
$\Delta_0$ is unramified at $v$, as required. Indeed, since
$[\Delta_0] \in {}_2\Br(k)$ and $k$ is a number field, we can take
$\Delta_0$ to be a quaternion algebra. It follows from the
description of the residue map we have already used in the proof of
Proposition \ref{P:Elliptic-ramif} that if $\Delta_0$ is ramified at
$v$, then it can be written in the form $\Delta_0 = (r , s)_k$, with
$r , s \in k^{\times}$, where $v(r) = 0$ and $\bar{r} \notin
{\overline{k}_v^{\times}}^2$ and $v(s) = 1$. Then $\Delta_0
\otimes_k F = (r , s)_F$. Furthermore, since
$\overline{F}_{\tilde{v}} = \overline{k}_v(\bar{y})$, we see that
$\bar{r} \notin {\overline{F}_{\tilde{v}}^{\times}}^2$. As
$\tilde{v}(s) = v(s) = 1$, we conclude that $\Delta_0 \otimes_k F$
is ramified at $\tilde{v}$, a contradiction.

\vskip1mm

$(ii)$: Since $\Delta_0$ is unramified at all $v \in V^K \setminus
S$, it is easy to see (e.g. using Azumaya algebras) that $\Delta'$
is unramified at all $w \in V_1$. So, $\Delta'' = (r , x - b)_K
\otimes_K (s , x - c)_K$ is unramified at all $w \in V_1$.
\end{proof}

\vskip1mm

Thus, the class $[\Delta_0]$ belongs to the unramified Brauer group
${}_2\Br(k)_{V^k \setminus S}$. So, the following immediate
consequence of (ABHN) bounds the number of possibilities for
$[\Delta_0]$.
\begin{lemma}\label{L:Elliptic15}
Let $S \subset V^k$ be a finite subset containing $V_{\infty}^k$ and
at least one non-complex place. Then
$$
\vert {}_2\Br(k)_{V^k \setminus S} \vert = 2^{\vert S \vert - t},
$$
where $t = c + 1$ and $c$ is the number of complex places.
\end{lemma}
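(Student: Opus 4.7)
The plan is to derive this directly from formula (\ref{E:ubr}) in \S 3.6, which expresses ${}_n\Br(k)_{V^k\setminus S}$ as the kernel of the sum-of-invariants map $\Sigma_S \colon \bigoplus_{v\in S}{}_n\Br(k_v)\to \frac{1}{n}\Z/\Z$. The whole statement reduces to an elementary count once we identify the local $2$-torsion pieces.

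First I would specialize (\ref{E:ubr}) to $n=2$, obtaining
$$
{}_2\Br(k)_{V^k\setminus S}=\mathrm{Ker}\!\left(\bigoplus_{v\in S}{}_2\Br(k_v)\stackrel{\Sigma_S}{\longrightarrow}\tfrac{1}{2}\Z/\Z\right).
$$
Next I would describe each local summand using the invariant isomorphisms reviewed in \S 3.6: for every nonarchimedean $v$ the invariant gives ${}_2\Br(k_v)\cong \Z/2\Z$, for $k_v=\R$ one has ${}_2\Br(k_v)=\Br(k_v)=\Z/2\Z$, and for $k_v=\C$ one has ${}_2\Br(k_v)=0$. Since $S\supset V^k_\infty$, all complex places of $k$ lie in $S$, so the number of nonzero summands is exactly $\vert S\vert -c$, giving
$$
\bigoplus_{v\in S}{}_2\Br(k_v)\;\cong\;(\Z/2\Z)^{\vert S\vert-c}.
$$

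The final step is to show that $\Sigma_S$ is surjective onto $\tfrac{1}{2}\Z/\Z\cong\Z/2\Z$. This is where the assumption that $S$ contains at least one non-complex place enters: pick any such $v_0\in S$, and let $\xi\in\bigoplus_{v\in S}{}_2\Br(k_v)$ be the tuple supported only at $v_0$ with value the nontrivial class; then $\Sigma_S(\xi)=\tfrac{1}{2}+\Z$. Hence the kernel has index $2$, so its order is
$$
2^{\vert S\vert - c - 1}\;=\;2^{\vert S\vert - t},\qquad t=c+1,
$$
as claimed. There is no real obstacle here once (\ref{E:ubr}) is in hand; the only point to be careful about is that $S$ is required to contain at least one non-complex place, which is precisely what ensures surjectivity of $\Sigma_S$ (and hence that the index is $2$ rather than $1$).
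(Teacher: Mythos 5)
Your proof is correct and follows exactly the paper's route: both start from the exact sequence (\ref{E:ubr}), identify ${}_2\Br(k_v)\cong\Z/2\Z$ for non-complex $v$ and ${}_2\Br(k_v)=0$ for complex $v$, and use the presence of a non-complex place in $S$ to see that $\Sigma_S$ is surjective, so the kernel has order $2^{|S|-c-1}$. You have merely spelled out the counting that the paper leaves implicit.
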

\begin{proof}
According to (\ref{E:ubr}), we have
$$
{}_2\Br(K)_{V^k \setminus S} \simeq \mathrm{Ker}\left( \bigoplus_{v
\in S}\: {}_2\Br(k_v) \stackrel{\Sigma_S}{\longrightarrow}
\frac{1}{2}\Z/\Z \right),
$$
where $\Sigma_S$ is the sum of the invariant maps for $v \in S$.
Since ${}_2\Br(k_v) \simeq (1/2)\Z/\Z$ for every non-complex $v$ and
$\Sigma_S$ is surjective as $S$ contains a non-complex place, our
assertion follows.
\end{proof}

\vskip2mm

To bound the number of possibilities for $[\Delta'']$, we need the
following well-known statement.
\begin{lemma}\label{L:Elliptic10}
Let $k$ be a number field and $S \subset V^k$ be a finite
subset containing $V_{\infty}^k$. Set
$$
\tilde{\Gamma} = \{ x \in k^{\times} \: \vert \: v(x) \equiv
0(\mathrm{mod}\: 2) \ \ \text{for all} \ \ v \in V^k \setminus S \}.
$$
If $\nu_2 \colon k^{\times} \to k^{\times} / {k^{\times}}^2$ is the
canonical homomorphism, then the image  $\Gamma = \nu_2(\tilde{\Gamma})$ is
finite of order $$\vert \Gamma \vert = \vert
{}_2\mathrm{Cl}_S(k) \vert \cdot \vert U_S(k)/U_S(k)^2 \vert,$$ where $\mathrm{Cl}_S(k)$ and $U_S(k)$ are the class group and the
group of units of the ring of $S$-integers $\mathcal{O}_k(S)$, respectively.
\end{lemma}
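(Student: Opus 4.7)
The plan is to produce a surjective homomorphism $\phi \colon \tilde{\Gamma} \to {}_2\mathrm{Cl}_S(k)$ whose kernel is exactly $U_S(k) \cdot {k^{\times}}^2$, so that passing to the quotient by ${k^{\times}}^2$ yields a short exact sequence
$$
1 \longrightarrow U_S(k)/U_S(k)^2 \longrightarrow \Gamma \longrightarrow {}_2\mathrm{Cl}_S(k) \longrightarrow 0,
$$
from which the order formula follows by multiplicativity, while finiteness follows from finiteness of $\mathrm{Cl}_S(k)$ and finite generation of $U_S(k)$ (Dirichlet's $S$-unit theorem).

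To construct $\phi$, I would set $\mathcal{O} := \mathcal{O}_k(S)$ and work with the divisor group $\mathrm{Div}(\mathcal{O}) = \bigoplus_{v \in V^k \setminus S} \Z$ together with the principal-divisor map $\mathrm{div} \colon k^{\times} \to \mathrm{Div}(\mathcal{O})$, $x \mapsto \sum_{v \notin S} v(x)\cdot v$, whose cokernel is $\mathrm{Cl}_S(k)$. By the very definition of $\tilde{\Gamma}$, an element $x \in k^{\times}$ lies in $\tilde{\Gamma}$ if and only if $\mathrm{div}(x) \in 2\,\mathrm{Div}(\mathcal{O})$; since $\mathrm{Div}(\mathcal{O})$ is torsion-free, the divisor $D(x) := \tfrac{1}{2}\mathrm{div}(x)$ is uniquely determined. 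Defining $\phi(x) := [D(x)] \in \mathrm{Cl}_S(k)$ yields a homomorphism whose image is $2$-torsion (because $2[D(x)] = [\mathrm{div}(x)] = 0$), and surjectivity onto ${}_2\mathrm{Cl}_S(k)$ is immediate: any class $[D] \in {}_2\mathrm{Cl}_S(k)$ satisfies $2D = \mathrm{div}(x)$ for some $x \in k^{\times}$, whence $x \in \tilde{\Gamma}$ and $\phi(x) = [D]$.

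Next I would identify the kernel. An $x \in \tilde{\Gamma}$ lies in $\ker\phi$ iff $D(x) = \mathrm{div}(y)$ for some $y \in k^{\times}$, iff $\mathrm{div}(x/y^2) = 0$, iff $x/y^2 \in U_S(k)$; thus $\ker\phi = U_S(k)\cdot {k^{\times}}^2$ (and the reverse inclusion $U_S(k)\cdot {k^{\times}}^2 \subset \tilde{\Gamma}$ is obvious from the definition). Since ${k^{\times}}^2 \subset \tilde{\Gamma}$, we have $\Gamma = \tilde{\Gamma}/{k^{\times}}^2$, so $\phi$ induces the claimed exact sequence, with kernel
$$
U_S(k)\cdot {k^{\times}}^2 / {k^{\times}}^2 \;\cong\; U_S(k) / \bigl(U_S(k) \cap {k^{\times}}^2\bigr).
$$

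The last step is to verify that $U_S(k) \cap {k^{\times}}^2 = U_S(k)^2$: if $u = x^2$ with $u \in U_S(k)$ and $x \in k^{\times}$, then $2\,\mathrm{div}(x) = \mathrm{div}(u) = 0$ in the torsion-free group $\mathrm{Div}(\mathcal{O})$ forces $\mathrm{div}(x) = 0$, i.e.\ $x \in U_S(k)$. Honestly, there is no serious obstacle here; the argument is essentially a two-step $\delta$-style cohomological computation built from the tautological sequence $1 \to U_S(k) \to k^{\times} \to \mathrm{Div}(\mathcal{O}) \to \mathrm{Cl}_S(k) \to 0$, and the only small points requiring attention are the well-definedness of $D(x)$ and the identification $U_S(k) \cap {k^{\times}}^2 = U_S(k)^2$, both of which rely on torsion-freeness of $\mathrm{Div}(\mathcal{O})$.
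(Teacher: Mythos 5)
Your argument is correct and follows the same route as the paper: both derive the order formula from the exact sequence $0 \to U_S(k)/U_S(k)^2 \to \Gamma \to {}_2\mathrm{Cl}_S(k) \to 0$, except that the paper simply cites Lang (\emph{Fundamentals of Diophantine Geometry}, Ch.~6, Thm.~1.4) for this sequence, whereas you construct it from scratch via the divisor map $\mathrm{div}\colon k^{\times} \to \mathrm{Div}(\mathcal{O}_k(S))$. The two technical points you flag (unique halvability of $\mathrm{div}(x)$ and $U_S(k) \cap {k^{\times}}^2 = U_S(k)^2$) are indeed the only delicate spots, and your appeal to torsion-freeness of $\mathrm{Div}(\mathcal{O}_k(S))$ handles both correctly.
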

\begin{proof}
According to \cite[Ch. 6, Theorem 1.4]{LaDG}, there is an exact sequence
$$
0 \to U_S(k)/U_S(k)^2 \longrightarrow \Gamma \longrightarrow
{}_2\mathrm{Cl}_S(k) \to 0,
$$
from which our claim follows.
\end{proof}

\vskip1mm

By Lemma \ref{L:Elliptic3}$(i)$, $[\Delta'']$ is unramified at $w =
w(v)$ for any $v \in V^k \setminus S$, so it follows from
Proposition~\ref{P:Elliptic-ramif} that $r , s \in \tilde{\Gamma}$.
Then the number of possibilities for $[\Delta'']$ does not exceed
$\vert \Gamma \vert^2$. So, the required estimation in Theorem
\ref{T:Elliptic1} is obtained by combining the estimations from
Lemmas \ref{L:Elliptic15} and \ref{L:Elliptic10}. \hfill $\Box$
%
%
%
%
%

\vskip3mm

\begin{cor}\label{C:Elliptic1}
In the notations of Theorem \ref{T:Elliptic1}, for any central quaternion division algebra $D$ over $K,$ we have
$$
\vert \gen(D) \vert \leqslant 2^{\vert S \vert - t} \cdot \vert {}_2
\mathrm{Cl}_S(k) \vert^2 \cdot \vert U_S(k)/U_S(k)^2 \vert^2.
$$
\end{cor}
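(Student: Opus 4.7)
The plan is to derive this corollary as a direct combination of Theorem \ref{T:Ram1} with Theorem \ref{T:Elliptic1}, exploiting the special feature of exponent $2$. First I would invoke the setup of this section: the set $V = V_0 \cup V_1$ of discrete valuations of $K$ satisfies conditions (A), (B), (C) of \S\ref{S:Ram} for $n = 2$ (as was noted just before the statement of Theorem \ref{T:Elliptic1}). Since $D$ is a quaternion division $K$-algebra, it has degree $2$ and, in particular, its exponent in $\Br(K)$ is also $2$, so that Theorem \ref{T:Ram1} applies with $n = 2$.

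Applying the ``in particular'' clause of Theorem \ref{T:Ram1}, I obtain
$$
\vert \gen(D) \vert \leqslant \vert {}_2\Br(K)_V \vert \cdot \varphi(2)^r, \quad \text{where } r = \vert \Ram_V(D) \vert.
$$
The crucial observation is that $\varphi(2) = 1$, so the ramification factor $\varphi(2)^r$ disappears regardless of the value of $r$. This collapses the estimate to the clean inequality $\vert \gen(D) \vert \leqslant \vert {}_2\Br(K)_V \vert$, which is where the quaternion hypothesis does all the heavy lifting.

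Finally, I would feed in the bound furnished by Theorem \ref{T:Elliptic1},
$$
\vert {}_2\Br(K)_V \vert \leqslant 2^{\vert S \vert - t} \cdot \vert {}_2 \mathrm{Cl}_S(k) \vert^2 \cdot \vert U_S(k)/U_S(k)^2 \vert^2,
$$
and concatenate the two inequalities to conclude. There is essentially no obstacle in this step: the entire analytic content of the corollary has already been absorbed into Theorem \ref{T:Elliptic1} (the splitting of ${}_2\Br(K)_{V_0}$ via Theorem \ref{T:Elliptic2}, the corestriction arguments of Lemmas \ref{L:Elliptic1}--\ref{L:Elliptic3}, and the $S$-arithmetic count of Lemma \ref{L:Elliptic10}). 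The corollary is just the transcription of that bound from the unramified Brauer group to the genus, made particularly transparent by the vanishing of $\varphi(2) - 1$.
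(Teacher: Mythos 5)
Your proposal is correct and is precisely the argument the paper intends (the corollary is stated without explicit proof, as it follows immediately): apply the ``in particular'' clause of Theorem \ref{T:Ram1} with $n = 2$, note $\varphi(2) = 1$ so the factor $\varphi(n)^r$ vanishes, and then substitute the bound on $\vert {}_2\Br(K)_V \vert$ from Theorem \ref{T:Elliptic1}.
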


\vskip5mm

\noindent {\bf Example 4.12.} Consider the elliptic curve $E$ over
$\Q$ given by $y^2 = x^3 - x$. We have $\delta = 4$, so $S =
\{\infty , 2 \}$. Furthermore,
$$
\vert S \vert - t = 1, \ \ \mathrm{Cl}_S(\Q) = 1 \ \ \text{and} \ \
U_S(\Q) = \{\pm 1\} \times \Z.
$$
So, by Corollary \ref{C:Elliptic1}
we have $\vert \gen(D) \vert \leqslant 2 \cdot 4^2 = 32$.

\vskip5mm

\section{Generalizations}\label{S:Gen}


One can generalize the notion of genus from division algebras to
algebraic groups using maximal tori in place of maximal subfields.
More precisely, given an absolutely almost simple (simply connected
or adjoint) algebraic $K$-group $G$, we define its genus as the set
of $K$-isomorphism classes of $K$-forms $G'$ of $G$ that have the
same isomorphism (or isogeny) classes of maximal $K$-tori as $G$ (a
different approach to the definition of genus is described in Remark
5.6 below).

\vskip1.5mm

\noindent {\bf Remark 5.1.} We note that for $G = \mathrm{SL}_{1 ,
D}$, where $D$ is a finite-dimensional central division $K$-algebra,
only maximal {\it separable} subfields of $D$ give rise to maximal
$K$-tori of $G$. So, to make the definition of $\gen(D)$ consistent
with this definition of the genus of $G$, one should probably give
the former in terms of maximal separable subfields rather than in
terms of all subfields. In this paper, however, we consider only
division algebras whose degree is prime to $\mathrm{char} \: K$, for
which this issue does not arise, so we opted to use the simplest
possible definition of $\gen(D)$.

\vskip1.5mm

In view of the finiteness theorem for $\gen(D)$ \cite{CRR}, it seems
natural to propose the following.

\vskip1.2mm

\noindent {\bf Conjecture 5.2.} {\it Let $G$ be an absolutely almost
simple simply connected algebraic group over a~finitely generated
field $K$ of characteristic zero (or of ``good" characteristic
relative to $G$). Then there exists a \emph{finite} collection $G_1,
\ldots , G_r$ of $K$-forms of $G$ such that if $H$ is a $K$-form of
$G$ having the same isomorphism classes of maximal $K$-tori as $G$,
then $H$ is $K$-isomorphic to one of the $G_i$'s.}

\vskip1.2mm

\addtocounter{thm}{2}

It was shown in \cite[Theorem 7.5]{PR1} that the conjecture is true
if $K$ is a number field. Furthermore, our previous results enable
us to prove this conjecture for inner forms of type
$\textsf{A}_{\ell}$ in the general case.

\begin{thm}\label{T:Al}
Let $G$ be an absolutely almost simple simply connected algebraic
group of inner type $\textsf{A}_{\ell}$ over a finitely generated
field $K$ whose characteristic is either zero or does not divide
$\ell + 1$. Then the above conjecture is true for $G$.
\end{thm}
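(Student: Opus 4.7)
The plan is to reduce Theorem~\ref{T:Al} to the finiteness of $\gen(D)$ for central division $K$-algebras established in~\cite{CRR}. Since $G$ is an absolutely almost simple simply connected group of inner type $\textsf{A}_{\ell}$, we have $G = \mathrm{SL}_{1,A}$ for some central simple $K$-algebra $A$ of degree $n = \ell+1$; decompose $A \simeq M_m(D)$ for a central division $K$-algebra $D$ of degree $d$ with $md = n$. Since $d$ divides $n$, it is prime to $\mathrm{char}\: K$, so the finiteness theorem for $\gen(D)$ from~\cite{CRR} applies to $D$.

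The bridge between tori and subalgebras is the standard fact that maximal $K$-tori of $\mathrm{SL}_{1,A}$ correspond (up to conjugation) to maximal commutative \'etale $K$-subalgebras $E \subset A$ of dimension $n$, the associated torus being the norm-one torus $R^{(1)}_{E/K}(\mathbb{G}_m)$. A key family of such subalgebras is provided by the maximal subfields $F$ of $D$: the diagonal embedding realizes $F^m$ as a maximal \'etale $K$-subalgebra of $M_m(D) = A$, since $F$ is self-centralizing in $D$.

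Let $H$ be any $K$-form of $G$ with the same isomorphism classes of maximal $K$-tori. The main obstacle is a preliminary reduction showing that $H$ must also be of inner type; this requires an analysis of the Galois action on the character lattices of maximal $K$-tori of $G$ and the constraints such lattices satisfy for outer forms of type $\textsf{A}_{\ell}$ (for $\ell = 1$ there are no outer forms, so this step is vacuous). Granting this, write $H \simeq \mathrm{SL}_{1,B}$ for a central simple $K$-algebra $B$ of degree $n$, and $B \simeq M_{m'}(D')$ with $D'$ a central division algebra of degree $d'$, $m'd' = n$.

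The remainder of the argument is direct. Pick a maximal subfield $F$ of $D$. Since $F^m$ embeds in $A$ as a maximal \'etale $K$-subalgebra, the hypothesis forces $F^m$ to embed in $B$ as well, which is equivalent to $B \otimes_K F \simeq M_n(F)$, i.e., $F$ splits $B$. Since the degree of any splitting field of $B$ is divisible by the Schur index $d'$, we get $d' \mid d$; interchanging $D$ and $D'$ yields $d = d'$ (and hence $m = m'$). Knowing $d = d'$, the field $F$ is a splitting field of $B$ of degree equal to the index of $B$, so $F$ embeds in $D'$ as a maximal subfield. By symmetry, $D$ and $D'$ have the same maximal subfields, i.e., $[D'] \in \gen(D)$. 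The finiteness of $\gen(D)$ then leaves only finitely many possibilities for $D'$, hence for $B \simeq M_m(D')$ up to $K$-isomorphism, and hence for $H$, proving the theorem.
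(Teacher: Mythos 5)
Your proposal correctly identifies the reduction to $\gen(D)$ via the CRR finiteness theorem, but there is a genuine gap at the crucial step, and you concede (without proving) the inner-form reduction.

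The serious gap is in the sentence ``the hypothesis forces $F^m$ to embed in $B$ as well.'' The hypothesis only concerns $K$-isomorphism classes of maximal $K$-\emph{tori}; it tells you that $T = \mathrm{R}^{(1)}_{F^m/K}(\mathbb{G}_m)$ is $K$-isomorphic to some maximal torus $T' = \mathrm{R}^{(1)}_{E'/K}(\mathbb{G}_m)$ of $H$, but a $K$-isomorphism of norm-one tori need not be induced by a $K$-algebra isomorphism $F^m \simeq E'$. This is exactly the obstacle the paper flags explicitly just before introducing generic tori. The paper circumvents it by working only with generic maximal $K$-tori, for which a $K$-isomorphism of tori extends to a $\overline{K}$-isomorphism of the ambient groups (via Lemma 4.3 and Remark 4.4 of \cite{PR1}), and is hence essentially induced by an algebra isomorphism. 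Your torus $\mathrm{R}^{(1)}_{F^m/K}(\mathbb{G}_m)$ is \emph{never} generic when $m > 1$ (the Galois action on its character lattice is the diagonal action of a subgroup of $S_d$ on $m$ blocks, so its image in $W \simeq S_n$ has order at most $d! < n!$), and even when $m = 1$ an arbitrary maximal subfield $F$ of $D$ need not yield a generic torus; yet your argument requires the conclusion for \emph{every} maximal subfield $F$ of $D$ in order to conclude $[D'] \in \gen(D)$. So the direct global argument cannot work as written. The paper's proof instead combines Proposition \ref{P:generic} (which produces a generic $K$-torus $G(K_v)$-conjugate to any given maximal $K_v$-torus) with a local analysis at every discrete valuation $v$, proving the weaker but sufficient statement $[D'] \in \gen_V(D)$, and then invokes the finiteness of the local genus $\gen_V(D)$ from Remark 2.6(1) together with Theorem 8 of \cite{CRR}.

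On the inner-form step, you write ``Granting this\ldots'' and move on. The paper proves it (step (I)) by picking a generic maximal $K$-torus $T'$ of $G'$: if $G'$ were outer then $\mathrm{Im}\,\theta_{T'} \not\subset W(\Phi(G',T'))$, while for the inner form $G$ one always has $\mathrm{Im}\,\theta_T \subset W(G,T)$ for every maximal $K$-torus $T$ (\cite[Lemma 4.1(b)]{PR1}), so $T'$ could not be $K$-isomorphic to any maximal $K$-torus of $G$. This piece is not difficult, but it is genuinely needed and should not be left as a placeholder.
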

\begin{proof}
We recall that $G = \mathrm{SL}_{m , D}$, where $D$ is a central
division $K$-algebra of degree $n$ with $mn = \ell + 1$ (cf.
\cite[2.3.1]{Pl-R}) It is well-known that any maximal $K$-torus $T$
of $G$ is of the form
$$
\mathrm{R}^{(1)}_{E/K}(\mathbb{G}_m) =
\mathrm{R}_{E/K}(\mathbb{G}_m) \cap G
$$
(where $\mathbb{G}_m$ is the 1-dimensional split torus and
$\mathrm{R}_{E/K}$ denotes the Weil functor of restriction of
scalars) for some maximal \'etale subalgebra $E$ of $A = M_m(D)$.
Let $G'$ be a $K$-form of $G$ having the same isomorphism classes of
maximal $K$-tori as $G$. To prove the theorem, it is enough to
establish the following two facts:

\vskip2mm

\noindent \ (I) \parbox[t]{16cm}{$G'$ is an inner form over $K$, and
consequently $G' = \mathrm{SL}_{m , D'}$, where $D'$ is a central
division $K$-algebra of degree $n$;}

\vskip1.3mm

\noindent (II) \parbox[t]{16cm}{for any discrete valuation $v$ of
$K$, write $D \otimes_K K_v = M_s(\mathcal{D})$ and $D' \otimes_K
K_v = M_{s'}(\mathcal{D}')$ with $\mathcal{D} , \mathcal{D}'$
division algebras; then $s = s'$, and $\mathcal{D}$ and
$\mathcal{D}'$ have the same isomorphism classes of maximal
subfields.}

\vskip2mm

\noindent Indeed, according to Theorem 8 in \cite{CRR}, there exists
a set $V$ of discrete valuations of $K$ that satisfies conditions
(A), (B) and (C) and for which the unramified Brauer group
${}_n\Br(K)_V$ is finite. Then as we pointed out in Remark 2.6(1),
the local genus $\gen_V(D)$ is finite. On the other hand, it follows
from (II) that $[D'] \in \gen_V(D)$, so the finiteness of the genus
of $G$ follows.

\vskip2mm

Regarding (II), we note that the issue here is that for $T_i =
\mathrm{R}^{(1)}_{E_i/K}(\mathbb{G}_m)$, $i = 1, 2$ where $E_1 ,
E_2$ are \'etale $K$-algebras, a $K$-defined isomorphism $T_1 \simeq
T_2$ of tori may not be induced by an isomorphism $E_1 \simeq E_2$
of $K$-algebras. So, the crucial observation is that the former will
in fact be (essentially) induced by the latter for {\it generic
tori}. We will now recall the relevant definitions that apply to an
arbitrary semi-simple $K$-group $G$, and then return to the group
$G$ as in Theorem \ref{T:Al}. Let $T$ be a maximal $K$-torus of $G$,
and denote by $\Phi = \Phi(G , T)$ the corresponding root system.
Furthermore, let $K_{T}$ be the minimal splitting field of $T$ and
$\Theta_{T} = \Ga(K_{T}/K)$ be its Galois group. Then the natural
action of $\Theta_{T}$ on the character group $X(T)$ defines a
homomorphism
$$
\theta_{T} \colon \Theta_{T} \longrightarrow \mathrm{Aut}(\Phi),
$$
and we say that $T$ is generic (over $K$) if $\mathrm{Im} \:
\theta_{T}$ contains the Weyl group $W(\Phi) = W(G , T)$. If $K$ is
a finitely generated field then generic tori always exist; moreover
we have the following.
\begin{prop}\label{P:generic}
Let $G$ be an absolutely simple algebraic group over a finitely
generated field $K$. Given a discrete valuation $v$ of $K$ and a
maximal $K_v$-torus $T_v$ of $G$, there exists a maximal $K$-torus
$T$ of $G$ which is generic over $K$ and is conjugate to $T_v$ by an
element of $G(K_v)$.
\end{prop}
If $K$ is of characteristic zero, this is proved in \cite[Corollary
3.2]{PR-Fields}; the argument in positive characteristic requires
only minimal changes.

\vskip2mm

{\it Proof of} (I). Let now $G$ be as in Theorem \ref{T:Al}. Since
it is an inner form over $K$, for any maximal $K$-torus $T$ of $G$
we have $\mathrm{Im} \: \theta_T \subset W(G , T)$ (cf. \cite[Lemma
4.1(b)]{PR1}). Assume now that $G'$ is an outer form. Using
Proposition \ref{P:generic}, pick a maximal generic $K$-torus $T'$
of $G'$. Since $G'$ is an outer form, we have $\mathrm{Im} \:
\theta_{T'} \not\subset W(\Phi(G' , T'))$, and therefore eventually
$\mathrm{Im} \: \theta_{T'} = \mathrm{Aut}(\Phi(G' , T'))$. It
follows from our previous remark that $T'$ cannot be $K$-isomorphic
to any maximal $K$-defined torus of $G$, a contradiction. Thus, $G'
= \mathrm{SL}_{m' , D'}$. Furthermore, the fact that $G$ and $G'$
have the same isomorphism classes of maximal $K$-tori, implies that
$$
m - 1 = \mathrm{rk}_K \: G = \mathrm{rk}_K \: G' = m' - 1,
$$
i.e. $m = m'$, and hence $D'$ has degree $n$.

\vskip2mm

{\it Proof of} (II). Let $A = M_m(D)$ and $A' = M_m(D')$. It is
enough to show that for a discrete valuation $v$ of $K$, the
algebras $\mathscr{A}_v = A \otimes_K K_v$ and $\mathscr{A}'_v = A'
\otimes_K K_v$ have the same isomorphism classes of maximal \'etale
subalgebras (cf. the Lemma 2.3 and Corollary 2.4 in \cite{RR}). Fix
a maximal \'etale $K_v$-subalgebra $\mathscr{E}$ of $\mathscr{A}_v$,
and let $\mathscr{T} =
\mathrm{R}^{(1)}_{\mathscr{E}/K_v}(\mathbb{G}_m)$ be the
corresponding maximal $K_v$-torus of $G$. Using Proposition
\ref{P:generic}, we can find a maximal $K$-torus $T$ of $G$ which is
generic over $K$ and which is conjugate to $\mathscr{T}$ by an
element of $G(K_v)$. We have $T =
\mathrm{R}^{(1)}_{E/K}(\mathbb{G}_m)$ for some maximal \'etale
$K$-subalgebra $E$ of $A$, and then $E \otimes_K K_v$ and
$\mathscr{E}$ are isomorphic as $K_v$-algebras. By our assumption,
there exists a $K$-isomorphism $\varphi \colon T \to T'$ onto a
maximal $K$-torus $T'$ of $G'$, where $T' =
\mathrm{R}_{E'/K}(\mathbb{G}_m)$. Since $T$ is generic and $G , G'$
are inner forms, it follows from Lemma 4.3 and Remark 4.4 in
\cite{PR1} that $\varphi$ extends to an isomorphism $\tilde{\varphi}
\colon G \to G'$ defined over an algebraic closure $\overline{K}$ of
$K$. Now, pick an isomorphism $A \otimes_K \overline{K} \to A'
\otimes_K \overline{K}$ of $\overline{K}$-algebras, and let
$\varphi_0 \colon G \to G'$ be the corresponding
$\overline{K}$-isomorphism of algebraic groups. Set $\varphi' =
\varphi$ if $\psi = \varphi^{-1}_0 \circ \tilde{\varphi} \in
\mathrm{Aut} \: G$ is inner, and define $\varphi'$ by $\varphi'(t) =
\varphi(t)^{-1}$ for $t \in T$ if $\psi$ is outer. Then in either
case, $\varphi' \colon T \to T'$ is a $K$-defined isomorphism of
tori that extends to a $\overline{K}$-isomorphism $\tilde{\varphi}'
\colon G \to G'$ induced by {\it some} isomorphism of
$\overline{K}$-algebras $\tau \colon A \otimes_K \overline{K} \to A'
\otimes_K \overline{K}$. Since $E$ (resp., $E'$) coincides with the
$K$-subalgebra of $A$ (resp., $A'$) generated by $T(K)$ (resp.,
$T'(K)$), and $\varphi'(T(K)) = T'(K)$, we conclude that $\tau$
yields a $K$-isomorphism between $E$ and $E'$. It follows that
$\mathscr{E}$ is isomorphic to the maximal \'etale $K_v$-subalgebra
$E' \otimes_K K_v$ of $\mathscr{A}'_v$. By symmetry, we see that
that $\mathscr{A}_v$ and $\mathscr{A}'_v$
have the same isomorphism classes of maximal \'etale
$K_v$-subalgebras, as required.
\end{proof}

\vskip2mm

\noindent {\bf Remark 5.5.} In the notations introduced in the proof
of (II) above, it follows from Lemma 4.3 and Remark 4.4 in
\cite{PR1} that if there exists a nontrivial $K$-defined {\it
isogeny} $T \to T'$ of generic tori in absolutely almost simple
simply connected groups of type $\textsf{A}_{\ell}$ then there also
exists a $K$-defined {\it isomorphism} $T \to T'$. So, the
finiteness result of Theorem \ref{T:Al} remains valid if one defines
the genus using isogeny classes in place of isomorphism classes. In
fact, it also remains valid if one defines the genus in terms of
isomorphism/isogeny classes of just maximal {\it generic} tori. On
the other hand, the argument used to prove Theorem 1 in \cite{PR1}
shows that if $G$ and $G'$ are absolutely almost simple algebraic
groups over a finitely generated field $K$ with the same isogeny
classes of maximal $K$-tori then either they have the same type, or
one of them is of type $\textsf{B}_{\ell}$ and the other of type
$\textsf{C}_{\ell}$ for some $\ell \geqslant 3$. In particular, if
$G$ is an absolutely almost simple simply connected algebraic group
of type different from $\textsf{B}_{\ell}$ $(\ell \geqslant 2)$ over
a finitely generated field $K$, then any absolutely almost simple
simply connected $K$-group having the same isogeny classes of
maximal $K$-tori as $G$ is necessarily a $K$-form of $G$.

\vskip2mm

\noindent {\bf Remark 5.6.} (Due to A.S.~Merkurjev) One can offer a
different (in a way, more functorial) definition of the genus of an
absolutely almost simple $K$-group $G$ as the set of $K$-isomorphism
classes of $K$-forms $G'$ of $G$ that have the same
isomorphism/isogeny classes of maximal tori not only over $K$ but
also over any field extension $F/K$. The well-known theorem of
Amitsur \cite{Ami} asserts that if $D$ and $D'$ are
finite-dimensional central division $K$-algebras such that every
field extension $F/K$ which splits $D$ also splits $D'$ then $[D']$
lies in the cyclic subgroup $\langle [D] \rangle$ of $\Br(K)$
generated by $[D]$; in particular, the genus of $G = \mathrm{SL}_{1
, D}$ would then be finite for any $D$ and would reduce to one
element for $D$ of exponent two. Furthermore, according to a result
of Izhboldin \cite{Izhb}, given nondegenerate quadratic forms $q$
and $q'$ of {\it odd} dimension $n$ over a~field $K$ of
characteristic $\neq 2$ the following condition

\vskip1mm

\noindent $(\star)$ {\it $q$ and $q'$ have the same Witt index over
any extension $F/K$,}

\vskip1mm

\noindent implies that $q$ and $q'$ are scalar multiples of each
other (this conclusion being false for even-dimensional forms). It
follows that for $G = \mathrm{Spin}_n(q)$ with $n$ odd the genus of
$G$ as defined in this remark reduces to a single element. We note
that the condition $(\star)$ is equivalent to the fact that the
motives of $q$ and $q'$ in the category of Chow motives are
isomorphic (Vishik \cite{Vish1}, and also Vishik \cite[Theorem
4.18]{Vish2}, Karpenko \cite{Karp}), so one can call the genus
defined above the {\it motivic genus}. It would be interesting to
investigate the motivic genus for other types of algebraic groups;
e.g. one can expect it  trivial for type $\textsf{C}_{\ell}$ (note
that it easily follows from properties of the Pfister forms that it
is always trivial for type $\textsf{G}_2$, at least in
characteristic not 2).

\vskip3mm

\section{A geometric connection}\label{S:Geom}

The recent interest in the problem of determining an
absolutely almost simple algebraic $K$-group by the isomorphism/isogeny
classes of its maximal $K$-tori (in particular, of determining a
finite-dimensional central division $K$-algebra by the isomorphism
classes of maximal subfields) was motivated at least in part by the
investigation of length-commensurable and isospectral locally
symmetric spaces (cf. \cite[\S 6]{PR-Gen}). In fact, the initial
question of whether two central quaternion division algebras over
$\Q(x)$ with the same quadratic subfields are necessarily
isomorphic appeared in print (apparently, for the first time)
in the paper \cite{PR1} on locally symmetric space (although
unofficially it may have been around for some time). The purpose of
this section is to briefly recall some aspects of this geometric
connection in order to provide a context for a general conjecture
about Zariski-dense subgroups.

For a (compact) Riemannian manifold $M$, we let $\mathcal{E}(M)$
denote the Laplace spectrum of $M$ (i.e, the spectrum of the
Beltrami-Laplace operator) and $L(M)$ the (weak) length spectrum of
$M$ (i.e., the collection of lengths of all closed geodesics in
$M$). It is a classical problem in differential geometry to
determine what one can say about two Riemannian manifolds $M_1$ and
$M_2$ given the fact that they are {\it isospectral}, i.e.
$\mathcal{E}(M_1) = \mathcal{E}(M_2)$. It turns out that for locally
symmetric spaces, isospectrality implies iso-length-spectrality
($L(M_1) = L(M_2)$), see \cite[Theorem 10.1]{PR1}. On the other
hand, neither iso-length-spectrality nor even isopectrality
typically implies that $M_1$ and $M_2$ are isometric, although it
was established in \cite{PR1} that for arithmetically defined
locally symmetric spaces of simple real algebraic groups of types
different from $\textsf{A}_{\ell}$, $\textsf{D}_{2\ell + 1}$ $(\ell
> 1)$ and $\textsf{E}_6$, the weaker condition of
length-commensurability ($\Q \cdot L(M_1) = \Q \cdot L(M_2)$)
already implies that $M_1$ and $M_2$ are {\it commensurable}, i.e.
have a common finite-sheeted cover. We refer the reader to
\cite{PR-Gen} for the history of the problem and an exposition of
the available results. We note only that the problem remains wide
open for non-arithmetically defined locally symmetric spaces, and
will now show that its analysis in the case of Riemann surfaces
leads to a variant of the problem about quaternion algebras with the
same maximal fields.

Let $\mathbb{H} = \{ x + iy \in \C \: \vert \: y > 0 \}$  be the
upper half-plane with the standard hyperbolic metric $$ds^2 =
y^{-2}(dx^2 + dy^2).$$ Any compact Riemann surface $M$ of genus $g
> 1$ can be written as a quotient $M = \mathbb{H}/\Gamma$ by a
(cocompact) discrete torsion-free subgroup $\Gamma \subset
PSL_2(\R)$. Let $\pi \colon SL_2(\R) \to PSL_2(\R)$ be the canonical
homomorphism, and set $\tilde{\Gamma} = \pi^{-1}(\Gamma)$. It is
well-known that closed geodesics in $M$ correspond to nontrivial
semi-simple elements in $\Gamma$; the precise nature
of this
correspondence is not important for us, as we only need information
about the length. One shows that if $c_{\gamma}$ is a closed
geodesic in $M$ corresponding to a nontrivial semi-simple element
$\gamma \in \Gamma$, then its length is given by the formula:
$$
\ell(c_{\gamma}) = \frac{2}{n_{\gamma}} \cdot \vert \log \vert
t_{\tilde{\gamma}} \vert \vert
$$
where $n_{\gamma}$ is an integer $\geqslant 1$ (winding number) and
$t_{\tilde{\gamma}}$ is an eigenvalue of an element $\tilde{\gamma}
\in \tilde{\Gamma}$ such that $\pi(\tilde{\gamma}) = \gamma$ (note
that since $\Gamma$ is torsion-free and discrete, $\tilde{\gamma}$
is automatically hyperbolic, i.e. $t_{\tilde{\gamma}} \in \R$ and
$\tilde{\gamma}$ is conjugate in $SL_2(\R)$ to
$\left(\begin{array}{cc} t_{\tilde{\gamma}} & 0
\\ 0 & t^{-1}_{\tilde{\gamma}} \end{array} \right)$). Thus,
\begin{equation}\label{E:LC}
\Q \cdot L(M) = \Q \cdot \{ \:  \log \vert t_{\tilde{\gamma}} \vert
\ \vert \ \gamma \in \Gamma \setminus \{ 1 \} \ \ \text{semi-simple}
\}.
\end{equation}
One of the tools for analyzing Kleinian groups developed in
\cite{MR} is based on associating to a Zariski-dense subgroup
$\tilde{\Gamma} \subset SL_2(\R)$ (or $SL_2(\C)$) the
$\Q$-subalgebra $D = \Q[\tilde{\Gamma}^{(2)}]$ of $M_2(\R)$ (or
$M_2(\C)$) spanned by the subgroup $\tilde{\Gamma}^{(2)} \subset
\tilde{\Gamma}$ generated by squares. This algebra turns out to be a
quaternion algebra whose center is the {\it trace field} $K = K_{\tilde{\Gamma}^{(2)}}$
(the subfield generated over $\Q$ by the traces
$\tr \tilde{\gamma}$ for $\tilde{\gamma} \in \tilde{\Gamma}^{(2)}$),
cf. \cite[3.1 and 3.2]{MR}; note that $K$ is automatically contained
in $D$ as for any $\tilde{\gamma} \in \mathrm{SL}_2$ we have
$$\tilde{\gamma} + \tilde{\gamma}^{-1} = (\tr \tilde{\gamma}) \cdot
I_2.$$ (The reason for passing from $\tilde{\Gamma}$ to
$\tilde{\Gamma}^{(2)}$ can be seen in the fact that the ``true"
field of definition of a Zariski-dense subgroup is generated by the
traces in the {\it adjoint} representation, cf. \cite{Vin1}.) If the
subgroup $\Gamma$ is arithmetic then $D$ is precisely {\it the}
quaternion algebra involved in its description, so one can expect
$D$ to play a significant role also in the general case. Finally, we
note that for any semisimple $\tilde{\gamma} \in
\tilde{\Gamma}^{(2)} \setminus \{ \pm 1 \}$, the subalgebra
$K[\tilde{\gamma}]$ is a maximal \'etale subalgebra of $D$ and that
for any integer $n \neq 0$ we have $K[\tilde{\gamma}^n] =
K[\tilde{\gamma}]$.

Now, let $M_i = \mathbb{H}/\Gamma_i$ $(i = 1, 2)$ be two compact
Riemann surfaces as above, and let $D_i = \Q[\tilde{\Gamma}^{(2)}_i]$ and $K_i = Z(D_i)$,
for $i = 1, 2$, be the corresponding quaternion algebra and trace field, respectively.
Assume that $M_1$ and
$M_2$ are length-commensurable. Then it follows, for example, from
\cite[Theorem~2]{PR1} that
$$
K_1 = K_2 =: K.
$$
Furthermore, (\ref{E:LC}) implies that for any $\tilde{\gamma}_1 \in
\tilde{\Gamma}^{(2)}_1,$ there exists $\tilde{\gamma}_2 \in
\tilde{\Gamma}^{(2)}$ such that
\begin{equation}\label{E:WC1}
t^m_{\tilde{\gamma}_1} = t^n_{\tilde{\gamma}_2}
\end{equation}
for some nonzero integers $m , n$, and vice versa. Then the elements
$\gamma^m_1$ and $\gamma^n_2$ are conjugate in $SL_2(\R)$, hence
$$
K[\tilde{\gamma}_1] = K[\tilde{\gamma}^m_1] \simeq
K[\tilde{\gamma}^n_2] = K[\tilde{\gamma}_2].
$$
Thus, the length-commensurability of $M_1$ and $M_2$ translates into
the following condition: {\it $D_1$ and $D_2$ have the same
isomorphism classes of maximal \'etale subalgebras that intersect
nontrivially\footnote{I.e. contain a common element $\neq \pm 1$.}
$\tilde{\Gamma}^{(2)}_1$ and $\tilde{\Gamma}^{(2)}_2$,
respectively.} On the other hand, if $M_1$ and $M_2$ are
commensurable, then $D_1$ and $D_2$ are isomorphic as $K$-algebras
(cf. \cite{MR}). So, the expected (but currently lacking much
supporting evidence) result that {\it the compact  Riemann surfaces
that are length-commensurable to a given compact Riemann surface
form finitely many commensurability classes} leads to the following
algebraic problem:

\vskip2mm

\begin{center}

$(*)$ \parbox[t]{4mm}{\ } \parbox[t]{15.7cm}{\it Let $D$ be a central quaternion algebra over
a finitely generated field $K$, and $\Gamma \subset SL(1 , D)$ be a
finitely generated subgroup Zariski-dense in $G = \mathrm{SL}_{1 ,
D}$ with the trace field $K$. Let $\gen(D , \Gamma)$ denote the
collection of classes $[D'] \in \Br(K)$ where $D'$ is a quaternion
$K$-algebra for which there exists a finitely generated subgroup
$\Gamma' \subset SL(1 , D')$ Zariski-dense in $G' = \mathrm{SL}_{1 ,
D'}$ with the trace field $K$ such that $D$ and $D'$ have the
same isomorphism classes of \'etale subalgebras that nontrivially
intersect $\Gamma$ and $\Gamma'$, respectively. Then $\gen(D ,
\Gamma)$ is finite.} 

\end{center}

\vskip2mm

\noindent (Of course, the commensurability class of a nonarithmetic
$\Gamma$ is not determined uniquely by the corresponding quaternion
algebra $D$; in fact, E.B.~Vinberg \cite{Vin2} has constructed an
infinite family of pairwise noncommensurable cocompact lattices in
$SL_2(\R)$ that are contained in $M_2(\Q)$. On the other hand, one
can consider the following stronger ``asymmetric" version of $(*)$:
{\it Let $D$, $K$ and $\Gamma$ be as in $(*)$. Define $\gen'(D ,
\Gamma)$  to be the collection of classes $[D'] \in \Br(K)$ where
$D'$ is a central quaternion division $K$-algebra with the following
property: any maximal subfield $P$ of $D$ that is generated by an
element of $\Gamma$ admits a $K$-embedding into $D'$. Then $\gen'(D
, \Gamma)$ is finite.})

\vskip1mm

We will now indicate how $(*)$ can be generalized to arbitrary
absolutely almost simple groups. For this, the relation between the
elements $\tilde{\gamma}_1$ and $\tilde{\gamma}_2$ expressed by
(\ref{E:WC1}) needs to be replaced by the notion of {\it weak
commensurability} introduced in \cite{PR1}. Now, rather than working directly with the definition given there,
it will be more convenient to start with an equivalent form of this notion that was used in \cite[2.2]{PR-Gen}.
Let $G_1 \subset \mathrm{GL}_{N_1}$ and $G_2
\subset \mathrm{GL}_{N_2}$ be two semi-simple algebraic groups
defined over a field $F$ of characteristic zero. Semi-simple
elements $\gamma_1 \in G_1(F)$ and $\gamma_2 \in G_2(F)$ are said to
be {\it weakly commensurable} if the subgroups of $\overline{F}^{\times}$
generated by their eigenvalues intersect nontrivially. Obviously,
this notion is a direct generalization of the condition
(\ref{E:WC1}); at the same time, it is equivalent to the more
technical definition given in \cite[\S 1]{PR1} that requires the existence
of maximal $F$-tori $T_i$ of $G_i$, for $i = 1, 2,$ such that
$\gamma_i \in T_i(F)$ and for some characters $\chi_i \in X(T_i),$ we
have
$$
\chi_1(\gamma_1) = \chi_2(\gamma_2) \neq 1.
$$
Furthermore, (Zariski-dense) subgroups $\Gamma_1 \subset G_1(F)$ and
$\Gamma_2 \subset G_2(F)$ are weakly commensurable if every
semi-simple element $\gamma_1 \in \Gamma_1$ of infinite order is
weakly commensurable to some semi-simple element $\gamma_2 \in
\Gamma_2$ of infinite order, and vice versa. We refer the reader to
\cite{PR1} and the survey article \cite{PR-Gen} for results
about weakly commensurable Zariski-dense subgroups,
one of which (see \cite[Theorem 2]{PR1})
states that weakly commensurable finitely generated Zariski-dense
subgroups of absolutely almost simple algebraic groups have the same
trace field (defined in terms of the adjoint representation); we only mention here that, just as in the case
of Riemann surfaces, length-commensurability of locally
symmetric spaces of simple real algebraic groups is adequately
reflected by weak commensurability of their fundamental groups
(see \cite[2.3]{PR-Gen} for a discussion). Now, we would like to
propose the following conjecture generalizing $(*)$.


\vskip2mm

\noindent {\bf Conjecture 6.1.} {\it Let $G_1$ and $G_2$ be
absolutely simple (hence adjoint) algebraic groups over a field $F$
of characteristic zero,  let $\Gamma_1 \subset G_1(F)$ be a finitely
generated Zariski-dense subgroup, and let $K = K_{\Gamma_1}$ be the
trace field\footnotemark of $\Gamma_1$. Then there exists a
\emph{finite} collection $\mathcal{G}^{(1)}_2, \ldots ,
\mathcal{G}^{(r)}_2$ of $F/K$-forms of $G_2$
such that if $\Gamma_2 \subset G_2(F)$ is a finitely generated
Zariski-dense subgroup that is weakly commensurable to $\Gamma_1$,
then it is conjugate to a subgroup of one of the
$\mathcal{G}^{(i)}_2(K)$'s \ $(\subset G_2(F))$.}

\footnotetext{I.e., the subfield of $F$ generated by the traces
$\mathrm{tr} \: \mathrm{Ad} \: \gamma$ of all elements $\gamma \in
\Gamma_1$ in the adjoint representation.}

\vskip2mm

The connection between the analysis of weak commensurability and the
study of absolutely almost simple algebraic groups having the same
isomorphism/isogeny classes of maximal tori (and hence between
Conjectures 5.2 and 6.1) is discussed in \cite[\S 9]{PR-Gen}. We
note that Weisfeiler's Approximation Theorem \cite{Weis}, in
conjunction with finiteness results for Galois cohomology of
algebraic groups over number fields (cf. \cite[\S 6]{PR1}), allows
one to prove Conjecture 6.1 for $K$ a number field - details will be
published elsewhere. No other cases have been considered so far, but
we hope that the techniques involved in the proof of Theorem
\ref{T:Al} will lead to a proof of Conjecture 6.1 for inner forms of
type $\textsf{A}_{\ell}$. (Of course, one can also consider stronger
asymmetric versions of Conjectures 5.2 and 6.1 along the lines
indicated after the statement of $(*)$.)

\vskip3mm

\noindent {\small   {\bf Acknowledgements.} We are grateful to
A.S.~Merkurjev for useful discussions. The first-named author was
partially supported by the Canada Research Chairs program and by an
NSERC research grant. The second-named author was partially
supported by NSF grant DMS-0965758, BSF grant 2010149 and the
Humboldt Foundation. During the preparation of the final version of
this paper he was visiting the Mathematics Department at Yale whose
hospitality and support are thankfully acknowledged.}

\vskip5mm

\bibliographystyle{amsplain}

\end{document}